\newtheorem{theorem}{Theorem}
\newtheorem{corollary}[theorem]{Corollary}
\newtheorem{example}[theorem]{Example}
\newtheorem{lemma}[theorem]{Lemma}
\newtheorem{problem}[theorem]{Problem}
\newtheorem{remark}[theorem]{Remark}
\newenvironment{proof}[1][Proof]{\noindent\textbf{#1.} }{\ \rule{0.5em}{0.5em}}
\journal{Journal of Mathematical Analysis and Applications}
\begin{document}
\begin{frontmatter}

\title{Semicycles and correlated asymptotics of oscillatory solutions to
second-order delay differential equations}

\author[label1]{Elena Braverman}
\author[label2]{Alexander Domoshnitsky}
\author[label2]{John Ioannis Stavroulakis}

\address[label1]{Dept. of Math. and Stats., University of
Calgary,2500 University Drive N.W., Calgary, AB  T2N 1N4, Canada}
\address[label2]{Department of Mathematics, Ariel University, Ariel 4076414, Israel}


\begin{abstract}
We obtain several new comparison results on the distance between zeros and
local extrema of solutions for the second order delay differential equation 
\begin{equation*}
x^{\prime \prime }(t)+p(t)x(t-\tau (t))=0,\text{\ }t\geq s\text{ }\ 
\end{equation*}%
where $\tau :\mathbb{R}\rightarrow \lbrack 0,+\infty )$, $p:\mathbb{R}%
\rightarrow \mathbb{R}$ are Lebesgue measurable and uniformly essentially
bounded, including the case of a sign-changing coefficient. We are thus able
to calculate upper bounds on the semicycle length, which guarantee that an
oscillatory solution is bounded or even tends to zero. Using the estimates
of the distance between zeros and extrema, we investigate the classification
of solutions in the case $p(t)\leq 0,t\in \mathbb{R}.$
\end{abstract}

\begin{keyword}
Semicycle, oscillation, delay, comparison theorems, second-order delay
equations

\noindent
{\bf AMS subject classification:} 
34K25, 34K11, 34K12
\end{keyword}

\end{frontmatter}

\section{Introduction}

\qquad Consider the second order nonautonomous delay differential equation 
\begin{equation}
x^{\prime \prime }(t)+p(t)x(t-\tau (t))=0,  \label{2.41}
\end{equation}%
in the case where $\tau :\mathbb{R}\rightarrow \lbrack 0,+\infty )$, $p:%
\mathbb{R}\rightarrow \mathbb{R}$ are Lebesgue measurable and uniformly
essentially bounded. By a solution of \eqref {2.41}, we understand a
function $x:[s-\tau _{m},+\infty )$, where $\tau _{m}:=\limfunc{esssup}%
\{\tau (t):t\geq $ $s\},$ which is absolutely continuous, together with its
first derivative, such that \eqref {2.41} holds a.e. on $[s,+\infty )$. The
restriction of $x$ to $[s-\tau _{m},s]$ is called the initial data. In the
case $\tau _{m}=0$, we consider the value of the first derivative at $s$ to
be included in the initial data. We call a real function $x:A\rightarrow 
\mathbb{R}
,A\subset 
\mathbb{R}
,$ oscillatory if it has arbitrarily large zeros. An interval between
adjacent zeros of an oscillatory solution is referred to as a\emph{\
semicycle}.

Both the delayed and the infinite dimensional character of \eqref {2.41} are
essential. The space of initial functions engenders an infinite dimensional
solution space, and even infinite dimensional ODEs fail to capture its
complexity. In fact, to rewrite \eqref {2.41} in a more general context, one
would need to consider difference equations in the phase space (\cite[%
Chapters 3, 4]{Hale}), retaining both the delay and the infinite solution
space. In this paper we focus our attention on the family of oscillatory
solutions, which is often the entire solution space, containing both bounded
and unbounded solutions. The classification of oscillatory solutions based
on the length of their semicycles is at the core of several classic
asymptotic results on \eqref {2.41}, obtained by two major veins of
research, of Myshkis and Azbelev. We innovate by considering the semicycles
as a continuum parametrized by the delay, refining and generalizing the
standard two-fold distinction between \textquotedblright
slow\textquotedblright\ and \textquotedblright rapid\textquotedblright\
oscillation. Let us describe in broad strokes these known results.

Myshkis \cite{Myshkis51} studied \eqref {2.41} with continuous parameters
and nonnegative coefficient $p:%
\mathbb{R}
\rightarrow \lbrack 0,+\infty )$, using bounds of the semicycle length. In 
\cite{Myshkis51}, a \textquotedblright large\textquotedblright\ semicycle
with adjacent zeros $a,b$ where $a<b,$ is defined by 
\begin{equation*}
b>\sup \{t:t-\tau (t)<a\},
\end{equation*}%
otherwise, it is a \textquotedblright small\textquotedblright\ semicycle.
Oscillatory solutions only possessing large semicycles are referred to as
slowly oscillating, whereas solutions with small semicycles are rapidly
oscillating. \ Myshkis proved several sufficient conditions for solutions to
be slowly oscillating. As for rapidly oscillating solutions, he showed that
when 
\begin{equation}
\tau _{m}\sqrt{\sup_{t\in 
\mathbb{R}
}p(t)}\leq 2\sqrt{2},  \label{10.27}
\end{equation}%
rapidly oscillating solutions are bounded, and if, further, the strict
inequality holds in \eqref {10.27} they tend to zero at infinity. He also
showed that solutions of the equation 
\begin{equation}
x^{\prime \prime }(t)+x(t-c)=0  \label{02.02}
\end{equation}%
possessing a large semicycle must be unbounded when $c\in (0,\pi ]$ (\cite%
{Myshkis51}, \cite[Chapter 16, p. 303]{Myshkis}), while Hale \cite[p. 135]%
{Hale} showed that \eqref {02.02} has unbounded solutions when $c>\pi $.

In contrast, Azbelev \cite{Azb1971} restricted attention to solutions of %
\eqref {2.41}, with nonnegative coefficient $p,$ corresponding to the zero
initial function%
\begin{equation}
x(t)\equiv 0,t\in \lbrack s-\tau _{m},s)  \label{3.57}
\end{equation}%
(allowing for discontinuity of $x$ or $x^{\prime }$ at $s$), which form a
two-dimensional fundamental system, generated by the solutions $z,y,$
corresponding to $z(s)=1,z^{\prime }(s)=0,$ and $y^{\prime }(s)=1,y(s)=0$. A
similar method was employed in \cite{Norkin}. The finite dimension of this
restricted solution space significantly facilitates the study of asymptotics
and enables one to gauge the behavior of the solutions by estimating the
Wronskian of the fundamental system%
\begin{equation*}
W(t):=\left\vert 
\begin{array}{cc}
z(t) & y(t) \\ 
z^{\prime }(t) & y^{\prime }(t)%
\end{array}%
\right\vert ,t\geq s,
\end{equation*}%
introduced by Domoshnitsky \cite{Dom1983}. Crucial in this vein of research 
\cite{BDK, Dom1983,Dom2001,Paatashvili} is slow oscillation of solutions of %
\eqref{2.41}, with the zero initial function \eqref {3.57}, in the sense of
the $h-$condition (on which see \cite{Azb1971}). \bigskip If the function $%
t\mapsto (t-\tau (t))$ is monotone and the coefficient $p$ nonnegative, all
oscillatory solutions of \eqref {02.02}, satisfying initial condition %
\eqref{3.57}, slowly oscillate. Notably, when furthermore the coefficient $p$
is nondecreasing and positive bounded, the existence of unbounded solutions
to \eqref {02.02}, with initial condition \eqref {3.57}, is equivalent to
the divergence of $\int_{0}^{\infty }\tau (t)dt$ (see \cite{Dom2001, iz}).

Unbounded solutions proven to exist within this framework are generally
dominant slowly oscillating solutions, as oscillation of \eqref {2.41} with
bounded parameters and positive coefficient is equivalent to oscillation of
the ODE $x^{\prime \prime }(t)+p(t)x(t)=0$, which is known to oscillate
under quite weak conditions (\cite{brands, dzurina, hille, kwong}). For an
in-depth study of several such parallels in asymptotic behavior and
classification of solutions between \eqref {2.41} and the corresponding ODE,
the reader is referred to Do\v{s}l\'{a} and Marini \cite{dosla}.
Furthermore, inequality \eqref{10.27}, which implies that rapidly
oscillating solutions are bounded, also implies \textit{a fortiori} slow
oscillation of the unbounded dominant solutions. However, while the constant 
$2\sqrt{2}$ is sharp in \eqref{10.27} by construction regarding the
boundedness of rapidly oscillating solutions, it can be improved to $\frac{%
\pi }{2}+\sqrt{2}$ regarding slow oscillation (see \cite[p. 18-23]{Myshkis51}
and independent results by Eliason \cite{eliason}).

In summary, both the research of Myshkis \cite{Myshkis51}, \cite{Myshkis},
and that of Azbelev \cite{Azb1971} and Domoshnitsky \cite{Dom2001}, beg the
question: is slow oscillation equivalent to unboundedness? This was answered
in the negative by Myshkis \cite[Paragraph 6]{Myshkis51}, constructing
examples of bounded, slowly oscillating, solutions, under $\tau _{m}\in (0,%
\frac{\pi }{2})$. Surprisingly, despite many well-known results linking slow
and rapid oscillation to unboundedness and boundedness respectively, the
distinction between rapidly and slowly oscillating solutions is insufficient
to determine their asymptotics. While Myshkis did consider the entire
solution space, the classification of slow and rapid oscillation was not a
fine enough sieve to describe (un)boundedness in terms of semicycle length.

Indeed, instead of a two-fold solution space (slowly and rapidly oscillating
solutions), one expects an infinite dimensional solution space, containing
oscillatory solutions that tend to zero with arbitrarily small semicycles.
In the case where there exists a spectral decomposition (see \cite[Chapter
12, p. 417]{bellman}, \cite[Chapter 8]{Hale}), either there exists a
sequence of eigenvalues that tend to infinity in the complex norm, and the
corresponding semicycles (half-periods) of the solutions tend to zero, or
there exist \textit{small}, superexponential, solutions which are expected
to be oscillatory. On the (non)existence of small solutions and their
oscillation, see e.g. \cite{cooke, garab}. More generally, the pathologies
exhibited by infinite dimensional subspaces of the phase space should
prevent the families of positive and unbounded solutions from\ being
infinite. For example, closed infinite subspaces of $\mathcal{C}[0,1]$, must
themselves contain infinite dimensional subspaces comprised of functions
with infinitely many zeros, as a consequence of well-known spaceability
results \cite[Corollary 3.7, Corollary 3.8]{enflo}, \cite{levine}. The
solution space being infinite is the expected behavior, one requiring 
\textit{ad hoc }conditions such as the vanishing of the coefficient or the
delay to ensure finite solution spaces (see sufficient conditions for an
infinite solution space in \cite{halepaper, lillo}, \cite[Chapter 2]{Hale}).
Thus, the solution space will typically consist of finite dimensional
unstable and center manifolds, along with an infinite dimensional stable
manifold of oscillatory solutions that have arbitrarily small semicycles.

\bigskip Moreover, the distance between zeros is intimately linked to the
rate of growth/decay of oscillatory solutions, and this relation has been
the subject of many investigations, e.g. \cite{cao1, cao2, garab, krisztin,
kriwalther}, starting with the Morse decompositions \cite{mallet}. Namely, a
bound on the oscillation frequency implies a bound on the rate of decay. Our
point of interest in this paper is existence and estimates of a threshold
semicycle length, which characterizes oscillatory solutions that tend to
zero. This follows from both known and new comparison Theorems on the
distance between zeros and local extrema. We obtain an upper bound on the
semicycles guaranteeing that an oscillatory solution is in the stable
manifold, and conversely, a necessary lower bound on the nonoscillation
intervals of unbounded solutions. As we are investigating oscillatory
solutions, the sign of the coefficient may be modified, allowing for
according changes in the delay. Consequently, the case of negative
coefficient is closely related to the case of sign-changing coefficient, and
the bounds on the ascent from zero to maximum give us a bound on the delay
which guarantees that oscillatory solutions tend to zero, under $p(t)\leq
0,t\in 
\mathbb{R}
$. A more detailed discussion of the negative coefficient case is postponed
to Section 5 of the present.

We note that a major difficulty in such estimates are regularity assumptions
on the solutions. The optimal results are expected to correspond to periodic
solutions of \eqref {2.41} (similarly to the first order delay equation).
However, rigorously proving that the bounds for regular solutions apply in
the general case, to irregural solutions as well, is no trivial task. In
addition, investigations without regularity assumptions such as slow
oscillation are rare in the literature. In the present, we reduce the two
dimensional problem to a one dimensional problem by not\ taking into account
the derivative at the zeros, and are thus able to obtain results without any
regularity assumptions.

The main results of this paper are as follows:

\begin{enumerate}
\item Calculation of the threshold semicycle length which distinguishes
between oscillatory solutions that tend to zero, and oscillatory solutions
that may be unbounded.

\item Extension of well-known comparison results and development of new
methods, clarifying the relations between various comparison Theorems.

\item Classification of solutions of \eqref {2.41} with negative
coefficient, including estimates of the critical value of the delay which
guarantees that oscillatory solutions tend to zero.
\end{enumerate}

We first obtain several forward and backward comparison results on %
\eqref{2.41}, as well as estimates on the distances between zeros and
extrema, by comparing with autonomous equations and treating the delay as a
parameter (Sections 2 and 3). We subsequently utilize these growth estimates
to obtain a \textit{continuum} (parametrized by the delay) of lower bounds
on the semicycles of solutions that do not tend to zero (Section 4). Once
the speed of oscillation is higher than this critical value, a solution
necessarily tends to zero. We furthermore obtain new results on the
classification of solutions with negative coefficient (Section 5). Section 6
contains examples illustrating the import and the sharpness of the results,
as well as a discussion of the current and future directions of research.

\section{Descent from a maximum to zero}

Throughout the following, we will assume that 
\begin{equation}
\limfunc{esssup}_{t\in 
\mathbb{R}
}|p(t)|\leq 1.  \label{7.04}
\end{equation}
This assumption cannot harm the generality, in virtue of the following
change of variables, introduced by Myshkis \cite{Myshkis51}.

\begin{lemma}[{\protect\cite[p. 16]{Myshkis51}}]
\label{Lemma3.32}Assume that $x$ solves \eqref {2.41}. For a fixed $k\in
(0,+\infty )$, define the function 
\begin{equation*}
\widetilde{x}(t):=x(kt).
\end{equation*}%
Then $\widetilde{x}$ solves the following equation%
\begin{equation*}
\widetilde{x}^{\prime \prime }(t)+k^{2}p(kt)\widetilde{x}\left( t-\frac{1}{k}%
\tau (kt)\right) =0.
\end{equation*}
\end{lemma}

Let us recall a well-known comparison Theorem. It was proven by Du and Kwong 
\cite{kwong} for continuous parameters, and the same proof applies to the
measurable case, as one retains continuous dependence \cite[Theorem 2.2]%
{Hale}. The following form of the statement is a combination of \cite[%
Theorem 1]{kwong} and \cite[Theorem 2]{kwong}, cf. the discussion in \cite[%
p. 310]{kwong}.

\begin{lemma}[\protect\cite{kwong}]
\label{Lemma7.42}\bigskip Assume that $z$ solves \eqref {2.41} on $%
[0,+\infty )$, and $y$ solves%
\begin{equation*}
y^{\prime \prime }(t)+P(t)y(t-T(t))=0,t\geq 0,
\end{equation*}%
where $P(t)\geq |p(t)|$ and $T(t)\geq \tau (t),t\geq 0$. Further assume 
\begin{equation*}
y(t)>0,t\in \lbrack -T_{m},a),a>0,
\end{equation*}%
where $T_{m}=\limfunc{esssup}\{T(t):t\geq 0\}$, and that 
\begin{equation*}
y^{\prime }(t)\leq 0,t\in \lbrack -T_{m},0].
\end{equation*}%
If $\tau _{m}>0$ assume 
\begin{equation}
|\frac{z(t)}{z(0)}|\leq \frac{y(t)}{y(0)},t\in \lbrack -\tau _{m},0]
\label{4.29}
\end{equation}%
and if $\tau _{m}=0$ assume 
\begin{equation}
\frac{z^{\prime }(0)}{z(0)}\geq \frac{y^{\prime }(0)}{y(0)}.  \label{4.30}
\end{equation}%
Then 
\begin{equation*}
\frac{z(t)}{z(0)}\geq \frac{y(t)}{y(0)}>0,t\in \lbrack 0,a).
\end{equation*}
\end{lemma}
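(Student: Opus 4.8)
The plan is to reduce the two-sided inequality to a one-sided comparison of logarithmic derivatives. Since $z(0)\neq 0$ and $y>0$ on $[-T_{m},a)$, we may assume $z(0),y(0)>0$ (replace $z$ by $-z$ if necessary) and set $u:=z/z(0)$, $v:=y/y(0)$; then $u,v$ solve the same equations as $z,y$ respectively, $u(0)=v(0)=1$, condition \eqref{4.29} reads $|u|\le v$ on $[-\tau_{m},0]$ (empty content if $\tau_{m}=0$), and \eqref{4.30} reads $u'(0)\ge v'(0)$. It then suffices to show $u\ge v>0$ on $[0,a)$.

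I would record two preliminary facts. (i) $v$ is positive and non-increasing on $[-T_{m},a)$: positivity is assumed, and for $t\in[0,a)$ one has $t-T(t)\in[-T_{m},a)$, so $v(t-T(t))>0$ and $v''(t)=-P(t)v(t-T(t))\le 0$; hence $v$ is concave on $[0,a)$, which together with $v'(0)\le 0$ (from $y'\le 0$ on $[-T_{m},0]$) gives $v'\le 0$ on $[0,a)$, while $v'\le 0$ on $[-T_{m},0]$ is assumed. (ii) $u'(0)\ge v'(0)$: this is \eqref{4.30} when $\tau_{m}=0$; when $\tau_{m}>0$ it follows because $v-u\in C^{1}$ is $\ge 0$ on $[-\tau_{m},0]$ and vanishes at $0$, so $(v-u)'(0)\le 0$.

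On the largest interval $[0,b)$ on which $u>0$ (necessarily $b>0$, and possibly $b=+\infty$, the equation being linear) set $r_{u}:=u'/u$, $r_{v}:=v'/v$; both are locally Lipschitz there, with $r_{u}'=-p(t)u(t-\tau(t))/u(t)-r_{u}^{2}$ and $r_{v}'=-P(t)v(t-T(t))/v(t)-r_{v}^{2}$ a.e. The claim is $r_{u}\ge r_{v}$ on $[0,\min(a,b))$. Granting it, $\log u-\log v=\int_{0}^{t}(r_{u}-r_{v})\ge 0$ yields $u\ge v$ there; since $v$ is bounded below by a positive constant on each compact subinterval of $[0,a)$, $u$ cannot vanish before $a$, so $b\ge a$ and we get $u\ge v>0$ on $[0,a)$. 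To prove the claim it is cleanest to first replace $P$ by $P+\varepsilon$ (with the same initial function, so $v$ is merely perturbed on a fixed $[0,a']$ with $a'<a$ and remains positive and non-increasing there for small $\varepsilon$), establish the \emph{strict} inequality, and let $\varepsilon\to 0$. Starting from $r_{u}(0)\ge r_{v}(0)$, suppose the inequality were first violated at some $t_{\ast}\in[0,\min(a,b))$; then $r_{u}=r_{v}$ at $t_{\ast}$ and $r_{u}\ge r_{v}$, hence $u\ge v$ and $u(\sigma)/u(t_{\ast})\le v(\sigma)/v(t_{\ast})$, on $[0,t_{\ast}]$. Combining this with $|u|\le v$ on $[-\tau_{m},0]$, with $T(t_{\ast})\ge\tau(t_{\ast})$, $P(t_{\ast})\ge|p(t_{\ast})|$, and with the positivity and monotonicity of $v$ from (i) (so $v(t_{\ast}-T(t_{\ast}))\ge v(t_{\ast}-\tau(t_{\ast}))>0$), a short case analysis on the sign of $p(t_{\ast})$ and on whether $t_{\ast}-\tau(t_{\ast})\ge 0$ gives
\[
p(t_{\ast})\,\frac{u(t_{\ast}-\tau(t_{\ast}))}{u(t_{\ast})}\;<\;\bigl(P(t_{\ast})+\varepsilon\bigr)\frac{v(t_{\ast}-T(t_{\ast}))}{v(t_{\ast})};
\]
since the $r^{2}$ terms cancel at $t_{\ast}$, this says $(r_{u}-r_{v})'(t_{\ast})>0$, so $r_{u}-r_{v}$ is positive just after $t_{\ast}$, contradicting the assumed violation.

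The main obstacle is the technical handling of the mere measurability of the coefficients in this ``cannot be first violated'' step: with continuous $p,\tau,P,T$ the right-hand sides of the two Riccati equations are continuous and the argument above is rigorous, so the measurable case is obtained by approximating the parameters by continuous ones and invoking continuous dependence of solutions (as the excerpt already notes), while the perturbation by $\varepsilon$ is precisely what removes the tangential degeneracy $p(t_{\ast})=P(t_{\ast})$ (and the case $P(t_{\ast})=0$). A secondary point requiring care throughout is that every delayed argument $t-\tau(t)$ or $t-T(t)$ must stay inside an interval on which the needed sign/size information ($u>0$, $u\ge v$, or $|u|\le v$) is already available; this is exactly what dictates the case split above, since for $t_{\ast}-\tau(t_{\ast})<0$ one falls back on $|u|\le v$ rather than on $u\ge v>0$.
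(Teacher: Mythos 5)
The paper does not actually prove Lemma~\ref{Lemma7.42}: it is stated with a citation to Du and Kwong \cite{kwong} (with the measurable case justified by a one-line appeal to continuous dependence), and the paper additionally remarks, after Lemma~\ref{Lemma3.39}, that the Vall\'{e}e Poussin/Sturm-separation framework provides an alternative proof -- the route actually carried out for the backward analogue, Theorem~\ref{Theorem3.37}, where one forms $u=z-y$ and invokes Lemma~\ref{Lemma3.39}. Your argument is a third, genuinely different route: a Riccati substitution $r_{u}=u^{\prime}/u$, $r_{v}=v^{\prime}/v$ together with a ``first-violation'' contradiction, regularised by a $P\mapsto P+\varepsilon$ perturbation to make the touching strict, and then closed off in the measurable case by approximation and continuous dependence. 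I have checked the content: the normalisation and the two preliminary facts (monotonicity of $v$ via concavity, and $u^{\prime}(0)\geq v^{\prime}(0)$ from \eqref{4.29} and the $C^{1}$ regularity of solutions in this paper's sense, or directly from \eqref{4.30}) are correct; the case analysis at $t_{\ast}$ does close in every branch, using $u(\sigma)/u(t_{\ast})\leq v(\sigma)/v(t_{\ast})$ on $[0,t_{\ast}]$, $|u|\leq v$ on $[-\tau_{m},0]$, the monotonicity chain $v(t_{\ast}-\tau(t_{\ast}))\leq v(t_{\ast}-T(t_{\ast}))$, and $u(t_{\ast})\geq v(t_{\ast})$; and the passage from $r_{u}\geq r_{v}$ to $u\geq v$ and to $b\geq a$ is sound. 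Compared with the Vall\'{e}e--Poussin route (which works at the level of the second-order inequality for $u-v$ and a two-point boundary problem), the Riccati route is more elementary and self-contained but pays a price in regularity: the derivative of $r_{u}-r_{v}$ is only defined a.e., so the ``first-violation'' step is literally valid only for continuous data, exactly the difficulty you flag. The approximation step you invoke to cover measurable coefficients needs a bit more care than stated -- the approximants must preserve the constraint chain $P_{n}\geq|p_{n}|$, $T_{n}\geq\tau_{n}$ and the bounds $\tau_{m,n}\leq\tau_{m}$, $T_{m,n}\leq T_{m}$ (e.g.\ by approximating $P-|p|$ and $T-\tau$ by nonnegative continuous functions and capping the delays), and the initial data should be kept fixed so that the sign hypotheses and \eqref{4.29} transfer to $y_{n},z_{n}$ automatically -- but this is a routine tightening, on the same level of informality as the paper's own appeal to continuous dependence. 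So: correct, but via a route different from both the cited source and the paper's suggested alternative.
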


As a consequence of this comparison, descending from a maximum to a zero, a
solution of \eqref {2.41} is bounded from below by an appropriately scaled
multiple of the following autonomous equation {(see \cite{kwong, Myshkis51}).%
}

\begin{lemma}[{\protect\cite[p. 25-28]{Myshkis51}}]
\label{Lemma6.42}\bigskip Consider a fixed $\Delta \in \lbrack 0,+\infty )$
and the solution $r_{\Delta }$ of the equation 
\begin{eqnarray}
r_{\Delta }^{\prime \prime }(t)+r_{\Delta }(t-\Delta ) &=&0,\text{\ }t\geq
0\   \label{4.21} \\
r_{\Delta }(0) &=&1,t\leq 0  \notag \\
r_{\Delta }^{\prime }(0) &=&0  \notag
\end{eqnarray}%
and denote its first zero by $\vartheta _{\Delta }$. Then $r_{\Delta }$ is
strictly decreasing on $[0,\vartheta _{\Delta }]$, and satisfies the
following expression for $\Delta >0$%
\begin{eqnarray*}
r_{\Delta }(t) &=&1-\frac{t^{2}}{2},t\in \lbrack 0,\Delta ] \\
r_{\Delta }(t) &=&1-\frac{t^{2}}{2}+\frac{(t-\Delta )^{4}}{24},t\in \lbrack
\Delta ,2\Delta ] \\
&&...
\end{eqnarray*}%
\begin{equation}
r_{\Delta }(t)=\sum_{k=0}^{n}\frac{(-1)^{k}}{(2k)!}\left[ t-(k-1)\Delta %
\right] ^{2k},t\in \lbrack (n-1)\Delta ,n\Delta ],n=1,2,...  \label{10.39}
\end{equation}%
As $\Delta \rightarrow 0^{+}$, $r_{\Delta }(t)$ tends uniformly to $\cos
(t),t\in \lbrack 0,\frac{\pi }{2}].$ Its first zero $\vartheta _{\Delta
}\geq $ $\sqrt{2}$ is a continuous function of $\Delta \in \lbrack 0,+\infty
)$. \ 
\end{lemma}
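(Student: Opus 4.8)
The plan is to treat the four assertions separately, obtaining the explicit formula \eqref{10.39} by the method of steps and the remaining claims by qualitative ODE arguments together with continuous dependence on parameters. For $\Delta=0$ the equation collapses to $r_0''+r_0=0$ with $r_0(0)=1$, $r_0'(0)=0$, so $r_0=\cos$ and $\vartheta_0=\pi/2>\sqrt2$; from now on assume $\Delta>0$.

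For \eqref{10.39}, set $\rho_n(t):=\sum_{k=0}^{n}\frac{(-1)^k}{(2k)!}[t-(k-1)\Delta]^{2k}$ and prove by induction on $n$ that $r_\Delta\equiv\rho_n$ on $[(n-1)\Delta,n\Delta]$. For $n=1$, on $[0,\Delta]$ the shifted argument satisfies $t-\Delta\le0$, so $r_\Delta(t-\Delta)=1$ by the initial data in \eqref{4.21}, hence $r_\Delta''=-1$ and the prescribed values at $0$ force $r_\Delta(t)=1-t^2/2=\rho_1(t)$. For the inductive step, differentiating $\rho_{n+1}$ twice term by term and relabelling the index gives the identity $\rho_{n+1}''(t)=-\rho_n(t-\Delta)$; on $[n\Delta,(n+1)\Delta]$ the argument $t-\Delta$ lies in $[(n-1)\Delta,n\Delta]$, where $r_\Delta=\rho_n$ by hypothesis, so $\rho_{n+1}$ solves \eqref{4.21} there. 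Since $\rho_{n+1}(t)-\rho_n(t)=\frac{(-1)^{n+1}}{(2n+2)!}[t-n\Delta]^{2n+2}$ vanishes to order $2n+2\ge2$ at $t=n\Delta$, the functions $\rho_{n+1}$ and $r_\Delta$ share the same value and derivative at $n\Delta$; as \eqref{4.21} is, on $[n\Delta,(n+1)\Delta]$, an ODE with the now-known forcing term $-r_\Delta(\cdot-\Delta)$, uniqueness yields $r_\Delta\equiv\rho_{n+1}$ there.

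Next, for $t\in[0,\vartheta_\Delta)$ the argument $t-\Delta$ is either $\le0$, where $r_\Delta\equiv1>0$, or lies in $[0,\vartheta_\Delta)$, where $r_\Delta>0$ by definition of the first zero; in both cases $r_\Delta''(t)=-r_\Delta(t-\Delta)<0$, so $r_\Delta'$ is strictly decreasing and, since $r_\Delta'(0)=0$, we obtain $r_\Delta'<0$ on $(0,\vartheta_\Delta]$, i.e. $r_\Delta$ is strictly decreasing on $[0,\vartheta_\Delta]$ (finiteness of $\vartheta_\Delta$ when $\Delta<\sqrt2$ follows because $r_\Delta'(t)\le r_\Delta'(\Delta)=-\Delta<0$ for $t\ge\Delta$ as long as $r_\Delta>0$; when $\Delta\ge\sqrt2$ the formula gives $\vartheta_\Delta=\sqrt2$ outright). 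Consequently $r_\Delta\le1$ on $(-\infty,\vartheta_\Delta]$, hence $r_\Delta''\ge-1$, and integrating twice from $0$ gives $r_\Delta(t)\ge1-t^2/2$ on $[0,\vartheta_\Delta]$; if $\vartheta_\Delta<\sqrt2$ this forces $r_\Delta(\vartheta_\Delta)>0$, a contradiction, so $\vartheta_\Delta\ge\sqrt2$. Applying Lemma~\ref{Lemma7.42} with $z=\cos$ (delay $0$, $\tau_m=0$, and $z'(0)/z(0)=0=y'(0)/y(0)$) and $y=r_\Delta$ gives $r_\Delta(t)\le\cos t$ on $[0,\vartheta_\Delta)$, so also $\vartheta_\Delta\le\pi/2$.

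Finally, as $\Delta\to0^+$ the delayed argument and the (constant) initial function converge, so continuous dependence on parameters (\cite[Theorem 2.2]{Hale}) gives $r_\Delta\to r_0=\cos$ uniformly on compact sets, in particular on $[0,\pi/2]$; uniform boundedness of $r_\Delta''=-r_\Delta(\cdot-\Delta)$ upgrades this to $C^1$ convergence. For continuity of $\vartheta_\Delta$ at an interior $\Delta_0>0$, the same argument gives $r_{\Delta}\to r_{\Delta_0}$ in $C^1$ on compacts; since $r_{\Delta_0}$ is positive on $[0,\vartheta_{\Delta_0})$, bounded away from $0$ on each $[0,\vartheta_{\Delta_0}-\varepsilon]$, and crosses zero transversally ($r_{\Delta_0}'(\vartheta_{\Delta_0})<0$), nearby $r_\Delta$ has no zero before $\vartheta_{\Delta_0}-\varepsilon$ and exactly one near $\vartheta_{\Delta_0}$, so $\vartheta_\Delta\to\vartheta_{\Delta_0}$; continuity at $\Delta_0=0$ follows from the uniform convergence on $[0,\pi/2]$ together with the bracketing $\vartheta_\Delta\in[\sqrt2,\pi/2]$, any accumulation point being a zero of $\cos$ in $[\sqrt2,\pi/2]$, hence $\pi/2=\vartheta_0$. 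The main obstacle is the inductive step for \eqref{10.39}: one must verify that the method of steps introduces no spurious linear terms, which is precisely what the order-$(2n+2)$ vanishing of $\rho_{n+1}-\rho_n$ at the junction ensures; a secondary technical point is invoking the form of continuous dependence appropriate to a delay equation whose delay varies with the parameter.
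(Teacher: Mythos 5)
The paper gives no proof of this lemma; it is cited verbatim from Myshkis (p.~25--28), so there is no in-text argument to compare against. Your proof is correct and, as far as one can tell, essentially the argument Myshkis himself gives: the method-of-steps induction for \eqref{10.39}, with the crucial observation that $\rho_{n+1}-\rho_n=\frac{(-1)^{n+1}}{(2n+2)!}(t-n\Delta)^{2n+2}$ vanishes to high order at the junction $t=n\Delta$ so that value and derivative are automatically matched; the sign argument $r_\Delta''(t)=-r_\Delta(t-\Delta)<0$ on $[0,\vartheta_\Delta)$ for strict decrease; the two-sided envelope $1-t^2/2\leq r_\Delta(t)\leq\cos t$ (the lower bound from $r_\Delta''\geq-1$, the upper bound from Lemma~\ref{Lemma7.42} with $z=\cos$, $\tau_m=0$) giving $\sqrt2\leq\vartheta_\Delta\leq\pi/2$; and continuous dependence plus transversality of the zero-crossing (which follows from the strict decrease just established) for continuity of $\vartheta_\Delta$. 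The one place where an examiner might ask for more detail is your appeal to ``continuous dependence on parameters'' when the parameter is the delay itself: the cited result \cite[Theorem~2.2]{Hale} is stated for dependence on the right-hand side of the functional, and one should check explicitly that the map $\Delta\mapsto F_\Delta(\phi):=-\phi(-\Delta)$ is continuous in the appropriate uniform sense on the bounded set of relevant histories (it is, by uniform continuity of the histories $r_\Delta$, which follows from the a priori bound $|r_\Delta''|\leq1$). With that remark added, the argument is complete.
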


\begin{remark}
\label{Remark1}Lemmas {\ref{Lemma7.42}, \ref{Lemma6.42}, imply that under }%
\eqref {7.04}$,$ if $x$ is a solution of \eqref {2.41} on $[0,\vartheta
_{\Delta }]$ such that $|x(t)|\leq x(0)=1,t\in \lbrack -\tau _{m},0],$ and $%
x^{\prime }(0)\geq 0,$ we have 
\begin{equation*}
x(t)\geq r_{\tau _{m}}(t)>0,t\in \lbrack 0,\vartheta _{\tau _{m}}).
\end{equation*}%
Thus{\ the distance in time descending from a maximum to the next zero is at
least} $\vartheta _{\tau _{m}}\geq \sqrt{2}$.
\end{remark}

The following Lemma will help us prove a backward analog of Lemma {\ref%
{Lemma7.42}, utilizing the }Vall\'{e}e Poussin and Sturm Separation Theorems 
\cite{Azb1971}, \cite[Chapter 11]{BDK}, \cite[Lemma 1]{labo}. We remark that
this framework can provide an alternative proof to Lemma {\ref{Lemma7.42}.}

\begin{lemma}[{\protect\cite{Azb1971}, \protect\cite[Lemma 1]{labo}}]
\label{Lemma3.39}Assume there exists a nonnegative solution of 
\begin{eqnarray*}
v^{\prime \prime }(t)+\chi \lbrack a,b](t-\tau (t))p(t)v(t-\tau (t)) &\leq
&0,t\in \lbrack a,b], \\
v(a) &>&0,
\end{eqnarray*}%
where $p(t)\geq 0,t\in \lbrack a,b]$ and $\chi S(\cdot )$ is the
characteristic function of any set $S\subset 
\mathbb{R}
$.\ Then there exists no nontrivial solution of 
\begin{eqnarray*}
u^{\prime \prime }(t)+\chi \lbrack a,b](t-\tau (t))p(t)u(t-\tau (t)) &\geq
&0,t\in \lbrack a,b] \\
u(a) &=&u(b)=0 \\
u^{\prime }(a) &\geq &0.
\end{eqnarray*}
\end{lemma}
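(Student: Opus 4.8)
The plan is to derive a contradiction from the simultaneous existence of the two solutions $v$ and $u$. The natural tool is the classical Valleé Poussin / Sturm separation machinery: if a disconjugacy certificate (a positive supersolution) exists on $[a,b]$, then no nontrivial solution of the corresponding subsolution inequality can have two zeros in $[a,b]$. Concretely, I would first record that both differential inequalities are posed with the \emph{same} truncated coefficient $q(t):=\chi[a,b](t-\tau(t))\,p(t)\geq 0$, which has the crucial feature that it only ``sees'' delayed arguments landing back in $[a,b]$; hence on $[a,b]$ we may treat $v(t-\tau(t))$ and $u(t-\tau(t))$ as honest values of $v,u$ on $[a,b]$ wherever $q(t)\neq 0$, and ignore the initial data outside. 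This reduction is what lets the argument run with no regularity hypotheses on $\tau$.

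Next I would form the Wronskian-type function $W(t):=u'(t)v(t)-u(t)v'(t)$ on $[a,b]$. Using $v''\leq -q(t)v(t-\tau(t))$ and $u''\geq -q(t)u(t-\tau(t))$, a direct computation gives
\begin{equation*}
W'(t)=u''(t)v(t)-u(t)v''(t)\geq -q(t)\,u(t-\tau(t))v(t)+q(t)\,u(t)v(t-\tau(t))\quad\text{a.e. on }[a,b].
\end{equation*}
The point is then to show the right-hand side is $\geq 0$ on the maximal subinterval where $u$ stays nonnegative, so that $W$ is nondecreasing there. To see this, one argues (as in the standard proof) that $u$ must be positive on $(a,b)$: since $u(a)=u(b)=0$, $u'(a)\geq 0$, and $u$ satisfies $u''\geq -q\,u(\cdot-\tau)\geq 0$ as long as the relevant delayed values of $u$ are nonnegative, a first-sign-change argument forces $u>0$ on $(a,b)$ (if $u$ became negative first at some $c\in(a,b)$, then on $[a,c]$ all delayed arguments $t-\tau(t)$ with $q(t)\neq 0$ lie in $[a,b]$ but — after a short bootstrap — in the region where $u\geq 0$, so $u''\geq 0$ on $[a,c]$, contradicting $u(a)=0$, $u'(a)\geq 0$, $u(c)=0$ unless $u\equiv0$). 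With $u\ge 0$ and $v>0$ on $[a,b]$ (the latter because $v$ is a nonnegative solution with $v(a)>0$ and $v''\le 0$ keeps it from vanishing — or one simply invokes that $v>0$ is part of the hypothesis phrasing, ``nonnegative solution with $v(a)>0$''), the inequality $q(t)[u(t)v(t-\tau(t))-u(t-\tau(t))v(t)]\geq q(t)\,u(t)v(t-\tau(t))\cdot(\text{something})$ needs the monotone comparison $u(t)v(t-\tau(t))\ge u(t-\tau(t))v(t)$, i.e. $u/v$ evaluated at $t$ versus at $t-\tau(t)$.

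So the heart of the matter — and the step I expect to be the main obstacle — is establishing that $u(t)/v(t)$ is nondecreasing on $[a,b]$, equivalently that $W(t)\geq 0$ there, \emph{simultaneously} with using $W\geq 0$ to prove that monotonicity; this is the usual circularity in delay Sturm theory and is resolved by a continuity/bootstrap argument starting from $t=a$. At $t=a$: $W(a)=u'(a)v(a)-u(a)v'(a)=u'(a)v(a)\geq 0$ since $u(a)=0$, $u'(a)\geq 0$, $v(a)>0$. Let $[a,\beta)$ be the maximal interval on which $W\geq 0$, hence $u/v$ nondecreasing, hence (since $(u/v)(a)=0$) $u/v\geq 0$, so $u\geq 0$ and moreover $u(t-\tau(t))/v(t-\tau(t))\leq u(t)/v(t)$ whenever $t\in[a,\beta)$ and $t-\tau(t)\in[a,t]$ — which is exactly the case when $q(t)\neq 0$, by the definition of the truncation (delayed points in $[a,b]$, and one checks they lie in $[a,t]$, not in $[t,b]$, because $\tau\geq 0$). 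Then $q(t)[u(t)v(t-\tau(t))-u(t-\tau(t))v(t)]=q(t)v(t)v(t-\tau(t))\big[(u/v)(t)-(u/v)(t-\tau(t))\big]\geq 0$, so $W'\geq 0$ a.e. on $[a,\beta)$, giving $W\geq W(a)\geq 0$ and preventing $\beta<b$; thus $W\geq 0$ on $[a,b]$ and $u/v$ is nondecreasing on all of $[a,b]$. Finally, $u(b)/v(b)=0=u(a)/v(a)$ with $u/v$ nondecreasing forces $u/v\equiv 0$ on $[a,b]$, i.e. $u\equiv 0$ on $[a,b]$; extending $u\equiv0$ off $[a,b]$ (its values there enter the equation only through $q$, which vanishes) shows $u$ is trivial, the desired contradiction. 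The one genuinely delicate point to write carefully is the handling of the set where $v(t-\tau(t))$ or $v(t)$ could vanish and the a.e. nature of the inequalities for $u'',v''$ given only absolute continuity of $u,u',v,v'$ — there I would integrate rather than differentiate, replacing $W'\geq 0$ pointwise by $W(t_2)-W(t_1)=\int_{t_1}^{t_2}(u''v-uv'')\geq 0$, which is legitimate since $u''v-uv''\in L^1_{loc}$.
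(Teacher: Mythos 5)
The paper does not supply a proof of Lemma~\ref{Lemma3.39}; it is quoted from Azbelev and Labovski, so your argument must stand on its own. Your choice of strategy --- forming $W=u'v-uv'$, truncating the coefficient so that only delayed arguments landing in $[a,b]$ matter, and bootstrapping the monotonicity of $u/v$ --- is the natural Sturm/Vall\'ee-Poussin route and, with the fixes below, can be made to work. However, as written the proof has one incorrect step and one genuine gap.

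First, the ``first-sign-change'' digression is wrong. From $u''\geq -q\,u(\cdot-\tau)$ with $q\geq 0$ and $u(\cdot-\tau)\geq 0$ you obtain $u''\geq$ \emph{a nonpositive quantity}, not $u''\geq 0$; the inequality is vacuous in the direction you need (compare $u''+u=0$, $\tau\equiv 0$: there $u$ is concave, not convex, where it is positive). So this argument does not show $u>0$ on $(a,b)$. Fortunately it is also unnecessary, since your bootstrap already delivers $u/v\geq (u/v)(a)=0$ wherever $W\geq 0$ is known.

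Second, and more seriously, the bootstrap itself does not close. You set $\beta$ to be the right endpoint of the maximal interval on which $W\geq 0$, show $W'\geq 0$ a.e.\ on $[a,\beta)$, and conclude ``preventing $\beta<b$.'' But what this argument yields is only $W(\beta)\geq W(a)\geq 0$ --- information you already had by continuity and the definition of $\beta$. To rule out $\beta<b$ you must show $W\geq 0$ on a right neighbourhood of $\beta$, and the sign of $W'$ there depends on $(u/v)(t)-(u/v)(t-\tau(t))$ with $t>\beta$, which your monotonicity control does not reach (in particular when $t-\tau(t)>\beta$, or when $(u/v)$ has already started to decrease between $\beta$ and $t$). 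This is not a formality: the delay term makes $W'$ nonlocal, so the ODE-style ``maximal interval of positivity'' continuity argument breaks. The standard repair is to convert $W'\geq q\,v\,v(\cdot-\tau)\bigl[(u/v)-(u/v)(\cdot-\tau)\bigr]$ into the integral inequality
\begin{equation*}
W(t)\ \geq\ W(a)\ +\ \int_a^t q(s)\,v(s)\,v(s-\tau(s))\int_{s-\tau(s)}^{s}\frac{W(r)}{v(r)^{2}}\,dr\,ds
\end{equation*}
and run a Gronwall-type estimate on $N(t):=\max\{0,-W(t)\}$, using that $1/v^{2}$ is bounded on $[a,b-\varepsilon]$ (recall $v$ is concave with $v(a)>0$, hence $v>0$ on $[a,b)$). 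That yields $W\geq 0$ on $[a,b)$, then on $[a,b]$ by continuity, after which your endgame ($W(a)=W(b)=0$, $u/v$ nondecreasing, hence $u\equiv 0$) is correct. As a separate small point, you should treat the possibility $v(b)=0$ explicitly (there $u/v$ is undefined at $b$, but $W(b)=0$ still holds and the conclusion follows from $W\equiv 0$ on $[a,b]$). Without the Gronwall step the proof is incomplete.
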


\begin{theorem}
\label{Theorem3.37} Assume that $y$ solves%
\begin{equation*}
y^{\prime \prime }(t)+P(t)y(t-T(t))=0,t\geq 0,
\end{equation*}%
where $P(t)\geq |p(t)|$ and $T(t)\geq \tau (t),t\geq 0$. Further assume 
\begin{equation*}
y(t)>0,t\in \lbrack -T_{m},b),b>0,
\end{equation*}%
where $T_{m}=\limfunc{esssup}\{T(t):t\geq 0\}$, and that 
\begin{equation*}
y^{\prime }(t)\leq 0,t\in \lbrack -T_{m},0].
\end{equation*}%
Assume that $z$ solves \eqref {2.41} on $[0,+\infty )$ and let $z,y,$
satisfy 
\begin{eqnarray}
|z(t)| &\leq &y(t),t\in \lbrack -\tau _{m},0],  \label{5.05} \\
|z(b)| &=&y(b).  \label{5.38}
\end{eqnarray}%
\newline
Then%
\begin{equation*}
|z(t)|\leq y(t),t\in \lbrack 0,b].
\end{equation*}
\end{theorem}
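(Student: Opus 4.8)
The plan is to argue by contradiction and invoke Lemma~\ref{Lemma3.39} as a boundary--value nonexistence principle: if $|z|\le y$ failed somewhere on $(0,b)$, I would produce a subinterval $I=[\alpha,\beta]\subseteq[0,b]$ on which $z-y$ vanishes at both endpoints but not identically, while $y$ itself serves as a nonnegative ``supersolution'' positive at $\alpha$, contradicting Lemma~\ref{Lemma3.39}. Two reductions are made at the outset. Since \eqref{2.41} is linear, $-z$ is also a solution satisfying \eqref{5.05} and \eqref{5.38}, so signs may be fixed as convenient. And since Lemma~\ref{Lemma3.39} requires a nonnegative coefficient whereas $p$ may change sign, I work with $q:=\max\{p,0\}$, which obeys $0\le q\le|p|\le P$. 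I also note that $y$ is nonincreasing on $[-T_m,b]$ --- it is so on $[-T_m,0]$ by hypothesis, and on $[0,b]$ one has $y''=-P\,y(\cdot-T)\le 0$ since $y\ge 0$ there, together with $y'(0)\le 0$ --- so that, using $T\ge\tau$, $y(t-T(t))\ge y(t-\tau(t))\ge 0$ for all $t\in[0,b]$.

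Assume $|z(t')|>y(t')$ for some $t'\in(0,b)$, and let $\alpha$ be the supremum of those $t\in[0,b]$ for which $|z|\le y$ throughout $[-\tau_m,t]$. This set is closed and contains $0$, so $|z|\le y$ on $[-\tau_m,\alpha]$ and $\alpha<t'<b$; by maximality of $\alpha$, points with $|z|>y$ accumulate at $\alpha^{+}$, forcing $|z(\alpha)|=y(\alpha)$ by continuity, and this is positive since $\alpha<b$. Hence $z(\alpha)\ne 0$, and after possibly replacing $z$ by $-z$ I may take $z(\alpha)>0$, so $z>0$ on a neighborhood of $\alpha$; then $z-y\le 0$ on a left neighborhood of $\alpha$ with equality at $\alpha$, whence $(z-y)'(\alpha)\ge 0$. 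Let $\gamma\in(\alpha,b]$ be the first zero of $z$ after $\alpha$ (or $\gamma:=b$ if there is none), so that $z\ge 0$ on $[\alpha,\gamma]$; the accumulating violations give a point $t_2\in(\alpha,\gamma)$ with $z(t_2)>y(t_2)$, while $z(\gamma)\le y(\gamma)$ (either $z(\gamma)=0<y(\gamma)$, or $\gamma=b$ and $z(b)\le|z(b)|=y(b)$). Put $\beta:=\inf\{t\in[t_2,\gamma]:z(t)\le y(t)\}\in(t_2,\gamma]$, so that $z(\beta)=y(\beta)$, and set $I:=[\alpha,\beta]\subseteq[\alpha,\gamma]$.

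Now apply Lemma~\ref{Lemma3.39} on $I$ with coefficient $q$ and delay $\tau$. On one side, $v:=y|_{I}$ is nonnegative with $v(\alpha)=y(\alpha)>0$, and a short computation using $q\le P$, the monotonicity of $y$ (with $T\ge\tau$), and $y\ge 0$ shows $v''+\chi[\alpha,\beta](t-\tau(t))\,q(t)\,v(t-\tau(t))\le 0$ on $I$; so Lemma~\ref{Lemma3.39} forbids any nontrivial $u$ on $I$ with $u''+\chi[\alpha,\beta](t-\tau(t))\,q(t)\,u(t-\tau(t))\ge 0$, $u(\alpha)=u(\beta)=0$, and $u'(\alpha)\ge 0$. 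On the other side, $u:=(z-y)|_{I}$ is exactly such a function: it vanishes at $\alpha$ and $\beta$ by construction, $u'(\alpha)\ge 0$ and $u(t_2)>0$ (so $u\not\equiv 0$), and, writing $u''=-p\,z(\cdot-\tau)+P\,y(\cdot-T)$, the required inequality is verified by splitting according to whether $t-\tau(t)\in I$ (in which case $z(t-\tau(t))\ge 0$, since $I\subseteq[\alpha,\gamma]$, so the extra $q$-term compensates the sign of $p$) or $t-\tau(t)\notin I$ (which forces $t-\tau(t)\in[-\tau_m,\alpha)$, where $|z|\le y$ bounds $z(t-\tau(t))$), using $q\le P$ and the monotonicity of $y$ throughout. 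This contradiction proves $|z|\le y$ on $[0,b]$.

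The heart of the matter is the construction of $\alpha$ in the second paragraph. It is essential that $\alpha$ be the \emph{last} point up to which $|z|\le y$ holds on the \emph{whole} segment $[-\tau_m,t]$: this is precisely what guarantees $|z|\le y$ to the left of $I$ and thereby makes the $t-\tau(t)\notin I$ cases go through. And because no regularity is imposed on the solutions --- the set where $|z|-y$ changes sign may accumulate at $\alpha$ --- the entire argument must be anchored at this last good point rather than at a nearest crossing of $z$ with $y$, which is the most delicate aspect of the proof.
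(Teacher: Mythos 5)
Your proof is correct, and it takes a genuinely different (and somewhat more self-contained) route than the paper's. Both arguments hinge on Lemma~\ref{Lemma3.39} applied to $u := z-y$ with $v := y$ as the positive supersolution, and both extract the same inequality $\frac{y'(a)}{y(a)}\le\frac{z'(a)}{z(a)}$ (equivalently $u'(a)\ge 0$) from the accumulation of violations at the right of the ``last good point.'' The divergence is in how the two proofs secure the nonnegativity of $z$ needed for the Sturm-type coefficient: the paper first invokes the forward Du--Kwong comparison (Lemma~\ref{Lemma7.42}) to obtain $z>0$ on all of $[a,b)$, and only then applies Lemma~\ref{Lemma3.39} on the fixed interval $[a,b]$. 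You bypass Lemma~\ref{Lemma7.42} altogether: you cut the interval at the first zero $\gamma$ of $z$ after $\alpha$ and then at the first re-crossing $\beta\le\gamma$ of $y$, so that $z\ge 0$ on $[\alpha,\beta]$ holds by construction; the price is the slightly more delicate bookkeeping showing $z(\beta)=y(\beta)$ in each of the subcases $\beta<\gamma$, $\gamma<b$, $\gamma=b$. A further cosmetic difference is your choice $q=\max\{p,0\}$ as the nonnegative coefficient in Lemma~\ref{Lemma3.39} where the paper uses $|p|$; both work, and indeed $|p|$ would also go through in your Case~1 computation since $(|p|-p)z(t-\tau(t))\ge 0$ there. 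Your version has the merit of making the Vall\'ee Poussin argument self-sufficient, which is consistent with the paper's own remark that Lemma~\ref{Lemma3.39} can give an alternative proof of Lemma~\ref{Lemma7.42}; the paper's version has the merit of reusing machinery already on the table and applying Lemma~\ref{Lemma3.39} on a single fixed interval.
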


\begin{proof}
Assuming the contrary, there exists $\xi \in (0,b)$ such that $y(\xi
)<|z(\xi )|.$ Hence the set 
\begin{equation*}
A:=\{t\in \lbrack -\tau _{m},b):y(t)<|z(t)|\}
\end{equation*}%
is nonempty. Furthermore, by \eqref{5.05}{, we have }$A\subset (0,b)$. We
denote the infinum of this set by $a$, and note that $a\geq 0.$ By
continuity, we have 
\begin{equation*}
y(a)\leq |z(a)|.
\end{equation*}%
If $a>0$ we must have 
\begin{equation}
y(a)=|z(a)|,  \label{5.06}
\end{equation}%
by continuity and the definition of $a$, and if $a=0,$ we again have %
\eqref{5.06}, because of \eqref{5.05}$.$

As \eqref{5.06} implies $a\notin A$, there exists a sequence of points $%
t_{n}\in A$, such that $t_{n}\overset{n\rightarrow \infty }{\rightarrow }%
a^{+}.$ Hence, 
\begin{equation}
\frac{y(t_{n})}{y(a)}<|\frac{z(t_{n})}{z(a)}|\text{.}  \label{5.07}
\end{equation}%
Relations \eqref{5.06}, \eqref{5.07}, imply 
\begin{equation}
\frac{y^{\prime }(a)}{y(a)}\leq \frac{z^{\prime }(a)}{z(a)}\text{.}
\label{5.37}
\end{equation}

Applying Lemma {\ref{Lemma7.42}, we have that }%
\begin{equation*}
\frac{z(t)}{z(a)}\geq \frac{y(t)}{y(a)}>0,t\in \lbrack a,b).
\end{equation*}%
We may assume $z(t)>0,t\in \lbrack a,b)$, without loss of generality
(otherwise, we consider $-z$). For $t\in \lbrack a,b]$, we have by
monotonicity, positivity, of $y$,

\begin{equation}
y^{\prime \prime }(t)+|p(t)|y(t-\tau (t))\leq y^{\prime \prime
}(t)+P(t)y(t-\tau (t))\leq y^{\prime \prime }(t)+P(t)y(t-T(t))=0,
\label{8.24}
\end{equation}%
and by the nonnegativity of $z,$%
\begin{equation}
z^{\prime \prime }(t)+\chi \lbrack a,b](t-\tau (t))|p(t)|z(t-\tau (t))\geq
-\chi \lbrack a-\tau _{m},a](t-\tau (t))|p(t)||z(t-\tau (t))|.  \label{8.25}
\end{equation}%
Setting $u(t):=z(t)-y(t),t\in \lbrack a,b]$ and subtracting \eqref{8.24}
from \eqref {8.25}%
\begin{eqnarray*}
u^{\prime \prime }(t)+\chi \lbrack a,b](t-\tau (t))|p(t)|u(t-\tau (t)) &\geq
&\chi \lbrack a-\tau _{m},a](t-\tau (t))|p(t)|\left( y(t-\tau (t))-|z(t-\tau
(t))|\right) \\
&\geq &0,t\in \lbrack a,b]
\end{eqnarray*}%
where $\chi A(\cdot )$ is the characteristic function of any set $A\subset 
\mathbb{R}
$. Furthermore, in virtue of \eqref {5.38} and \eqref {5.37}, 
\begin{eqnarray*}
u(a) &=&u(b)=0 \\
u^{\prime }(a) &\geq &0.
\end{eqnarray*}%
It suffices to show that the assumptions of Lemma {\ref{Lemma3.39} are
satisfied. By \eqref{8.24}, we have that }$y=v$ is the required nonnegative
solution of Lemma {\ref{Lemma3.39}.}
\end{proof}

The following Corollary immediately follows, and a direct proof solely using
Lemma {\ref{Lemma7.42} is also possible.}

\begin{corollary}
\label{Corollary5.40}For a fixed $\Delta \in \lbrack 0,+\infty ),$ consider
any solution $x$ of \eqref {2.41} on $[0,\vartheta _{\Delta }]$ with %
\eqref{7.04}$,$ $\tau _{m}\leq \Delta $ and $x(\vartheta _{\Delta })=0.$
Then 
\begin{equation*}
\left( \max_{t\in \lbrack -\tau _{m},0]}|x(t)|\right) r_{\Delta }(t)\geq
|x(t)|,t\in \lbrack 0,\vartheta _{\Delta }].
\end{equation*}%
\qquad \qquad
\end{corollary}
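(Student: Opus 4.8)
The plan is to derive Corollary \ref{Corollary5.40} as a direct specialization of Theorem \ref{Theorem3.37}, taking the comparison equation to be the autonomous equation \eqref{4.21} with $\Delta$ in place of $\tau_m$. Set $M:=\max_{t\in[-\tau_m,0]}|x(t)|$. If $M=0$, then $x$ vanishes on $[-\tau_m,0]$ together with being governed by \eqref{2.41}, and one checks directly (uniqueness, or the integral equation) that $x\equiv 0$ on $[0,\vartheta_\Delta]$, so the inequality is trivial; hence assume $M>0$ and, after dividing by $M$, reduce to the case $M=1$, i.e. $|x(t)|\le 1=:y(0)$ for $t\in[-\tau_m,0]$. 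We take $y:=r_\Delta$, $P(t):\equiv 1\ge |p(t)|$ by \eqref{7.04}, and $T(t):\equiv\Delta\ge\tau(t)$ since $\tau_m\le\Delta$; here $T_m=\Delta$. By Lemma \ref{Lemma6.42}, $r_\Delta$ is strictly decreasing, hence positive, on $[0,\vartheta_\Delta)$, and $r_\Delta(t)\equiv 1$ for $t\le 0$ gives $r_\Delta^{\prime}(t)=0\le 0$ on $[-\Delta,0]$; thus the hypotheses of Theorem \ref{Theorem3.37} on $y$ hold with $b=\vartheta_\Delta$.

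It remains to verify the two linking conditions \eqref{5.05} and \eqref{5.38} for $z:=x$. Condition \eqref{5.05}, namely $|x(t)|\le r_\Delta(t)=1$ for $t\in[-\tau_m,0]$, is exactly the normalization $M=1$. For \eqref{5.38} we need $|x(\vartheta_\Delta)|=r_\Delta(\vartheta_\Delta)$; by hypothesis $x(\vartheta_\Delta)=0$, and $\vartheta_\Delta$ is by definition the first zero of $r_\Delta$, so $r_\Delta(\vartheta_\Delta)=0$ as well, giving equality. Theorem \ref{Theorem3.37} then yields $|x(t)|\le r_\Delta(t)$ for $t\in[0,\vartheta_\Delta]$, and undoing the normalization (multiplying back by $M$) gives $\left(\max_{t\in[-\tau_m,0]}|x(t)|\right)r_\Delta(t)\ge|x(t)|$ on $[0,\vartheta_\Delta]$, as claimed.

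There is no serious obstacle here; the only points requiring a word of care are the degenerate case $M=0$ (handled by uniqueness for \eqref{2.41}) and the fact that the endpoint $t=\vartheta_\Delta$ is included — but this is precisely why \eqref{5.38} is stated as an equality of absolute values at $b$ rather than a strict inequality, and it matches the vanishing of both $x$ and $r_\Delta$ there. For the alternative "direct proof solely using Lemma \ref{Lemma7.42}" alluded to in the text, one would instead argue by contradiction exactly as in the proof of Theorem \ref{Theorem3.37}: let $a$ be the infimum of $\{t:|x(t)|>r_\Delta(t)\}$, observe $|x(a)|=r_\Delta(a)$ and $x^{\prime}(a)/x(a)\ge r_\Delta^{\prime}(a)/r_\Delta(a)$ by passing to a sequence $t_n\downarrow a$, apply Lemma \ref{Lemma7.42} on $[a,\vartheta_\Delta)$ with $z=\pm x$, $y=r_\Delta$ scaled to agree at $a$, and derive $|x(\vartheta_\Delta)|\ge r_\Delta(\vartheta_\Delta)$ in the limit $t\to\vartheta_\Delta^-$, contradicting $x(\vartheta_\Delta)=0<|x(a)|$ unless $x$ vanishes identically; but the one-line deduction from Theorem \ref{Theorem3.37} is cleaner and is the route I would present.
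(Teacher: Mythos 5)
Your proof is correct and is exactly the specialization of Theorem~\ref{Theorem3.37} that the authors intend when they say the Corollary ``immediately follows'' (taking $y=r_\Delta$, $P\equiv 1$, $T\equiv\Delta$, $b=\vartheta_\Delta$, and observing that \eqref{5.05} is the normalization and \eqref{5.38} holds because $x(\vartheta_\Delta)=r_\Delta(\vartheta_\Delta)=0$). The only small imprecision is in your degenerate case $M=0$: when $\tau_m=0$ the appeal to ``uniqueness'' is not quite the right mechanism (one has $x(0)=0$ but $x'(0)$ is free); a cleaner way that covers both subcases uniformly is to apply Theorem~\ref{Theorem3.37} with $y=\varepsilon r_\Delta$ for arbitrary $\varepsilon>0$ (which still satisfies all hypotheses and \eqref{5.05}, \eqref{5.38}), conclude $|x(t)|\le\varepsilon r_\Delta(t)$, and let $\varepsilon\to 0$.
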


\begin{corollary}
$\vartheta _{\Delta }$ is a nonincreasing function of $\Delta $. More
precisely, $\vartheta _{\Delta }=\sqrt{2},\forall \Delta \geq \sqrt{2}$, and 
$\vartheta _{\Delta }$ is strictly decreasing from $\frac{\pi }{2}$ to $%
\sqrt{2}$ for $\Delta \in \lbrack 0,\sqrt{2}]$.
\end{corollary}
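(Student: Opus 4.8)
The plan is to prove the two claims about $\vartheta_\Delta$ separately, relying on the explicit polynomial formula \eqref{10.39} and on Corollary \ref{Corollary5.40}. First, I would establish that $\vartheta_\Delta$ is nonincreasing via a comparison argument: fix $0 \le \Delta_1 < \Delta_2$ and apply Corollary \ref{Corollary5.40} with $\Delta = \Delta_2$ to the solution $x = r_{\Delta_1}$ of \eqref{4.21} with delay $\tau_m = \Delta_1 \le \Delta_2$. Since $r_{\Delta_1}(0) = 1$ and $r_{\Delta_1}(t) \equiv 1$ for $t \le 0$, the Corollary gives $r_{\Delta_2}(t) \ge r_{\Delta_1}(t)$ on $[0, \vartheta_{\Delta_2}]$ — but one must apply it on the interval where $x$ is known to vanish at the right endpoint, so the clean way is: on $[0, \min(\vartheta_{\Delta_1}, \vartheta_{\Delta_2})]$ both are positive and the comparison $r_{\Delta_2} \ge r_{\Delta_1}$ forces $\vartheta_{\Delta_1} \le \vartheta_{\Delta_2}$ to be impossible unless... wait — I need the inequality oriented so that the \emph{larger} delay gives the \emph{later} zero, i.e. $r_{\Delta_2}$ stays positive at least as long. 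Re-examining: Corollary \ref{Corollary5.40} says a solution with delay $\le \Delta$ is dominated by $r_\Delta$ up to $\vartheta_\Delta$; applied to $x = r_{\Delta_1}$ on $[0,\vartheta_{\Delta_1}]$ (here $x(\vartheta_{\Delta_1}) = 0$, $\tau_m = \Delta_1 \le \Delta_2$), it yields $r_{\Delta_2}(t) \ge r_{\Delta_1}(t)$ for $t \in [0, \vartheta_{\Delta_1}]$, hence $r_{\Delta_2}(\vartheta_{\Delta_1}) \ge r_{\Delta_1}(\vartheta_{\Delta_1}) = 0$, so $\vartheta_{\Delta_2} \ge \vartheta_{\Delta_1}$. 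That gives $\vartheta_\Delta$ \emph{nondecreasing} in $\Delta$, which contradicts the claim — so I must have the domination reversed, and the correct reading is that the larger-delay reference solution decays \emph{faster} near $0$ (since the restoring term sees a larger lag of a still-positive, decreasing profile), making $\vartheta_\Delta$ nonincreasing; I would nail this down directly from \eqref{10.39} rather than from the Corollary, by differentiating in $\Delta$.

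Concretely, for the monotonicity I would argue on $[0,\Delta_1]$ both solutions equal $1 - t^2/2$, and then track the first interval where they differ. On $[\Delta_1, \min(\Delta_2, 2\Delta_1)]$ we have $r_{\Delta_1}(t) = 1 - t^2/2 + (t-\Delta_1)^4/24$ while $r_{\Delta_2}(t) = 1 - t^2/2$, so $r_{\Delta_1}(t) > r_{\Delta_2}(t)$ there; continuing piecewise, an induction on the blocks $[(n-1)\Delta, n\Delta]$ using \eqref{10.39} shows $r_{\Delta_2}(t) \le r_{\Delta_1}(t)$ as long as both remain positive, whence $\vartheta_{\Delta_2} \le \vartheta_{\Delta_1}$. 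The bookkeeping that the added correction terms have the right sign pattern (the series is alternating and the leading omitted term controls the sign) is the routine-but-fiddly part.

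For the second claim, the limiting value $\frac{\pi}{2}$ at $\Delta = 0$ is exactly the statement in Lemma \ref{Lemma6.42} that $r_\Delta \to \cos$ uniformly on $[0,\frac\pi2]$, combined with continuity of $\vartheta_\Delta$ (also from Lemma \ref{Lemma6.42}); strict monotonicity on $[0,\sqrt 2]$ follows from the piecewise argument above once one checks the domination is \emph{strict} wherever $\Delta_1 < \Delta_2$ and $t > \Delta_1$. For the value $\vartheta_\Delta = \sqrt 2$ when $\Delta \ge \sqrt 2$: on $[0,\Delta]$ we have $r_\Delta(t) = 1 - t^2/2$ exactly, which first vanishes at $t = \sqrt 2$; since $\sqrt 2 \le \Delta$, the first zero occurs within this first block, so $\vartheta_\Delta = \sqrt 2$. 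Combined with $\vartheta_\Delta \ge \sqrt 2$ always (Lemma \ref{Lemma6.42}) and monotonicity, this pins down the behavior: $\vartheta_\Delta$ decreases strictly from $\frac\pi2$ to $\sqrt 2$ on $[0,\sqrt 2]$ and is constant $\sqrt 2$ afterward.

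The main obstacle I anticipate is getting the direction of the comparison right and making the piecewise induction on \eqref{10.39} rigorous — specifically, verifying that on each block the difference $r_{\Delta_1} - r_{\Delta_2}$ stays nonnegative given it was nonnegative on the previous block, which requires controlling both the value and the derivative at the block endpoints and exploiting that $r_\Delta$ solves \eqref{4.21} with a nonnegative, still-positive delayed term. An alternative, cleaner route that avoids the explicit polynomials is to use Lemma \ref{Lemma7.42} directly: view $r_{\Delta_1}$ as a solution of a delay equation with the \emph{smaller} delay and $r_{\Delta_2}$ with the larger, and invoke the comparison on the interval of positivity; I would present whichever of these two arguments turns out shorter, but I expect the explicit-formula approach to be the safest since it sidesteps any subtlety about which equation plays the role of the "comparison" equation in Lemma \ref{Lemma7.42}.
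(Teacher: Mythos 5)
Your proposal has a genuine gap where you lean on the explicit formula. The ``routine-but-fiddly'' piecewise induction you defer is actually the crux: showing that $r_{\Delta_1}-r_{\Delta_2}$ stays nonnegative past the first block where they diverge is not a clean comparison of polynomial blocks, because for a given $t$ the two solutions can be in different blocks $n$, and the tails are alternating series whose sign is not obvious block-by-block. Writing $f:=r_{\Delta_1}-r_{\Delta_2}$, one finds $f''(t) = -r_{\Delta_1}(t-\Delta_1)+r_{\Delta_2}(t-\Delta_2)\ge -f(t-\Delta_1)$ by monotonicity of $r_{\Delta_2}$; turning that delayed differential inequality into $f\ge 0$ is precisely the content of the Du--Kwong comparison Lemma~\ref{Lemma7.42}, so your ``explicit'' route is, in effect, re-proving that lemma without saying how. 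Your interlude with Corollary~\ref{Corollary5.40} also misfires: its hypothesis requires $x(\vartheta_\Delta)=0$, and $r_{\Delta_1}(\vartheta_{\Delta_2})=0$ is not known (it is exactly what is in question), so the corollary cannot be invoked on $[0,\vartheta_{\Delta_1}]$ with reference delay $\Delta_2$, and the ``wrong direction'' you observed came from an inapplicable step rather than a reversed inequality.

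The clean path is the one you mention and then set aside. Apply Lemma~\ref{Lemma7.42} directly with $y=r_{\Delta}$ (larger delay, so it plays the role of the dominating equation with $T\ge\tau$, $P\ge|p|$) and $z=r_{\widetilde\Delta}$, $\widetilde\Delta\le\Delta$; the hypotheses $y'\le 0$ on $[-T_m,0]$ and $|z(t)/z(0)|\le y(t)/y(0)$ on $[-\tau_m,0]$ hold with equality since both are $\equiv 1$ on the initial interval, so the conclusion reads $r_{\widetilde\Delta}(t)\ge r_{\Delta}(t)>0$ on $[0,\vartheta_{\Delta})$, hence $\vartheta_{\widetilde\Delta}\ge\vartheta_{\Delta}$. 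For strict monotonicity on $(0,\sqrt 2]$ the paper does not refine the inequality pointwise as you suggest; instead it argues by contradiction: if $\vartheta_{\widetilde\Delta}=\vartheta_{\Delta}$ with $0<\widetilde\Delta<\Delta\le\sqrt 2$, then Corollary~\ref{Corollary5.40} (now legitimately applicable since the shared endpoint is a zero of $r_{\widetilde\Delta}$) gives $r_{\Delta}\ge r_{\widetilde\Delta}$, while Lemma~\ref{Lemma7.42} gives $r_{\widetilde\Delta}\ge r_{\Delta}$, forcing $r_{\Delta}\equiv r_{\widetilde\Delta}$ on $[0,\vartheta_\Delta]$, which is impossible by \eqref{10.39}. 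Your observations that $\vartheta_\Delta=\sqrt 2$ for $\Delta\ge\sqrt 2$ (first block reaches zero) and $\vartheta_0=\pi/2$ are correct and match the paper.
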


\begin{proof}
The nonincreasing nature of $\vartheta _{\Delta }$ follows from Lemma {\ref%
{Lemma7.42}, setting }$y=r_{\Delta },z=r_{\widetilde{\Delta }}$ with $\Delta
\geq \widetilde{\Delta }$. That $\vartheta _{\Delta }=\sqrt{2},\forall
\Delta \geq \sqrt{2}$ is an immediate consequence of \eqref {10.39}. Now
assume that $\vartheta _{\widetilde{\Delta }}=\vartheta _{\Delta }$ where $0<%
\widetilde{\Delta }<\Delta \leq \sqrt{2}$. By Corollary {\ref{Corollary5.40}%
, }Lemma {\ref{Lemma7.42} (setting }$y=r_{\Delta },z=r_{\widetilde{\Delta }%
}),$ we have $r_{\Delta }(t)=r_{\widetilde{\Delta }}(t),t\in \lbrack
0,\vartheta _{\Delta }]$. Expression \eqref {10.39} and $0<\widetilde{\Delta 
}<\Delta \leq \sqrt{2}$ give a contradiction.
\end{proof}

\section{\protect\bigskip Ascent from a zero to a maximum}

We now turn our attention to the ascent from a zero to the next extremum. We
reduce the two dimensional problem to a one dimensional problem, without
taking into account the first derivative at the zero. This reduction allows
us to obtain results that apply with arbitrary scaling and semicycle length.
It furthermore allows us to directly combine the estimates on the ascent and
on the descent, without imposing any regularity restrictions -such as slow
oscillation- on the oscillatory solutions. For this reason, it is simplest
to reverse time and consider the independent variable $w:=-t$, with respect
to which our reasoning is similar to the previous section. In order to
estimate the least distance in time necessary to ascend from a zero to a
maximum, we need to define a sequence of upper bounds, constructed so that
it converges to a solution which ascends to a maximum in the least time.
Throughout the following a fixed $\Delta \in [ 0,+\infty )$ will denote a
bound on the maximum delay, and $\rho \in (0,+\infty )$ will be a scaling
constant.

\begin{lemma}
\label{Lemma5.03}For fixed $\Delta \in \lbrack 0,+\infty ),\rho \in
(0,+\infty )$ consider the following sequence of functions. Setting $\beta
_{0}(t)\equiv 1$, we define for $n=0,1,2,...$%
\begin{equation}
\beta _{n+1}(t)=\left\{ 
\begin{array}{cc}
1, & t\leq -\varpi _{n} \\ 
\displaystyle1-\int_{-\varpi _{n}}^{t}\left[ \int_{-\varpi _{n}}^{s}\max
\{\beta _{n}(v),\rho r_{\Delta }(\vartheta _{\Delta }-v-\Delta )\chi \lbrack
-\Delta ,0](v)\}dv\right] ds, & t\in \lbrack -\varpi _{n},0]%
\end{array}%
\right.  \label{7.01}
\end{equation}%
where $\varpi _{n}\in (0,+\infty )$, $n=0,1,2,...$is the unique solution of 
\begin{equation*}
h_{n}(-\varpi _{n})=\int_{-\varpi _{n}}^{0}\left[ \int_{-\varpi
_{n}}^{s}\max \{\beta _{n}(w),\rho r_{\Delta }(\vartheta _{\Delta }-w-\Delta
)\chi \lbrack -\Delta ,0](w)\}dw\right] ds=1,
\end{equation*}%
where $h_{n}:(-\infty ,0]\rightarrow \lbrack 0,+\infty )$ is the function 
\begin{equation}
h_{n}(v):=\int_{v}^{0}\left[ \int_{v}^{s}\max \{\beta _{n}(w),\rho r_{\Delta
}(\vartheta _{\Delta }-w-\Delta )\chi \lbrack -\Delta ,0](w)\}dw\right] ds
\label{4.53}
\end{equation}%
and $\chi A(\cdot )$ is the characteristic function of any set $A\subset 
\mathbb{R}
$. The sequence $\beta _{n}$ is well-defined and pointwise nonincreasing,
and the sequence $\varpi _{n}$ is nondecreasing.
\end{lemma}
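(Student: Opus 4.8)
My plan is to run two successive inductions on $n$, after a preliminary rewriting. Throughout I would abbreviate $g_n(w):=\max\{\beta_n(w),\,\rho\, r_\Delta(\vartheta_\Delta-w-\Delta)\,\chi[-\Delta,0](w)\}$ and first observe that the second entry is a fixed (independent of $n$), nonnegative, bounded, measurable function of $w$: by Lemma~\ref{Lemma6.42}, $r_\Delta$ is continuous on $(-\infty,\vartheta_\Delta]$, equals $1$ on $(-\infty,0]$ and satisfies $0\le r_\Delta\le1$ on $[0,\vartheta_\Delta]$, while $\chi[-\Delta,0]$ is the indicator of an interval. By Fubini one has $\int_v^0\!\int_v^s g_n(w)\,dw\,ds=\int_v^0(-w)g_n(w)\,dw$ and $\int_{-\varpi_n}^t\!\int_{-\varpi_n}^s g_n(w)\,dw\,ds=\int_{-\varpi_n}^t(t-w)g_n(w)\,dw$; hence $h_n(v)=\int_v^0(-w)g_n(w)\,dw$, and on $[-\varpi_n,0]$ the function $\beta_{n+1}$ is $C^1$ with $\beta_{n+1}'(t)=-\int_{-\varpi_n}^t g_n(w)\,dw$ and $\beta_{n+1}''(t)=-g_n(t)$ a.e., matching the constant piece $C^1$-ly at $t=-\varpi_n$.

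For well-definedness I would induct on $n$ on the following package: $\beta_n$ is defined, continuous, $\le1$, nonincreasing in $t$, with $\beta_n>0$ on $(-\infty,0)$ and $\beta_n\equiv1$ on some half-line $(-\infty,-c]$. This holds for $\beta_0\equiv1$. Given it for $\beta_n$, we get $g_n\ge\beta_n>0$ on $(-\infty,0)$ and $g_n\ge1$ on $(-\infty,-c]$, so $h_n$ is continuous, $h_n(0)=0$, strictly decreasing, and $h_n(v)\to+\infty$ as $v\to-\infty$; hence $h_n(-\varpi_n)=1$ has a unique root $\varpi_n\in(0,+\infty)$. Then $\beta_{n+1}$ is well defined, $C^1$, $\le1$, with $\beta_{n+1}'\le0$ and $\beta_{n+1}'<0$ on $(-\varpi_n,0]$ (as $g_n>0$ there), so $\beta_{n+1}$ decreases strictly from $\beta_{n+1}(-\varpi_n)=1$ to $\beta_{n+1}(0)=1-h_n(-\varpi_n)=0$; in particular $\beta_{n+1}>0$ on $(-\infty,0)$ and $\beta_{n+1}\equiv1$ on $(-\infty,-\varpi_n]$, closing this induction.

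For the two monotonicity claims I would induct on $n$, showing $\beta_{n+1}\le\beta_n$ on $(-\infty,0]$ and $\varpi_{n+1}\ge\varpi_n$. For $n=0$: $\beta_1\le1=\beta_0$ is immediate, whence $g_1\le g_0$, $h_1\le h_0$, so $h_1(-\varpi_0)\le h_0(-\varpi_0)=1=h_1(-\varpi_1)$ and strict monotonicity of $h_1$ gives $\varpi_1\ge\varpi_0$. For the step, assume $\beta_n\le\beta_{n-1}$ and $\varpi_n\ge\varpi_{n-1}$, so $[-\varpi_{n-1},0]\subset[-\varpi_n,0]$. For $t\le-\varpi_{n-1}$, $\beta_n(t)=1\ge\beta_{n+1}(t)$. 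On $[-\varpi_{n-1},0]$ set $u:=\beta_n-\beta_{n+1}$; since $\beta_n\le\beta_{n-1}$ forces $g_n\le g_{n-1}$, the differential description gives $u''=g_n-g_{n-1}\le0$ a.e., so $u$ is concave, with $u(0)=0-0=0$ and $u(-\varpi_{n-1})=1-\beta_{n+1}(-\varpi_{n-1})\ge0$ (using $\beta_{n+1}\le1$, and that $-\varpi_{n-1}$ is in the domain of $\beta_{n+1}$ because $\varpi_n\ge\varpi_{n-1}$). A concave function with nonnegative endpoint values lies above the chord joining them, hence $u\ge0$; so $\beta_{n+1}\le\beta_n$ on $[-\varpi_{n-1},0]$, and therefore on $(-\infty,0]$. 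Then $g_{n+1}\le g_n$, $h_{n+1}\le h_n$, so $h_{n+1}(-\varpi_n)\le h_n(-\varpi_n)=1=h_{n+1}(-\varpi_{n+1})$, and strict monotonicity of $h_{n+1}$ gives $\varpi_{n+1}\ge\varpi_n$.

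The step I expect to be the main obstacle is the inequality $\beta_{n+1}\le\beta_n$. A direct comparison of the two defining double integrals does not work: on the piece $[-\varpi_n,-\varpi_{n-1}]$ the smaller integrand $g_n\le g_{n-1}$ is attached to the larger lower limit $-\varpi_n$, which pulls the estimate the wrong way, so one is forced into the differential (concavity) formulation and must exploit that $\beta_{n+1}$ and $\beta_n$ agree at both endpoints of the shorter interval $[-\varpi_{n-1},0]$. This is also why the two monotonicity assertions have to be carried jointly: the endpoint comparison $\beta_{n+1}(-\varpi_{n-1})\le1=\beta_n(-\varpi_{n-1})$ presupposes $\varpi_n\ge\varpi_{n-1}$, while $\varpi_{n+1}\ge\varpi_n$ in turn is deduced from $\beta_{n+1}\le\beta_n$.
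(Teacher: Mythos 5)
Your proof is correct and takes essentially the same route as the paper: the same nested induction (well-definedness via positivity and eventual constancy of $\beta_n$, then the joint monotonicity of $\beta_n$ and $\varpi_n$) and the same concavity/chord argument for $\beta_{n+1}\le\beta_n$ on $[-\varpi_{n-1},0]$, with your Fubini rewriting being a harmless notational convenience. One small slip in your closing commentary: $\beta_n$ and $\beta_{n+1}$ do not in general agree at both endpoints of $[-\varpi_{n-1},0]$ (only at $0$, where both vanish); the concavity argument only requires $u(-\varpi_{n-1})\ge 0=u(0)$, which is exactly what you in fact use.
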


\begin{proof}
Assuming that $\beta _{n}$ is well-defined for $n\leq k-1$, where $k$ is a
positive integer, we shall show that $\beta _{k}$ is well-defined. By 
\eqref
{7.01}, and the definition of $\beta _{0}(t)\equiv 1$, we have 
\begin{equation}
\beta _{k-1}(t)>0,t<0,  \label{4.49}
\end{equation}%
and for a sufficiently large $K>0,$%
\begin{equation}
\beta _{k-1}(t)=1,t<-K.  \label{4.50}
\end{equation}%
By \eqref {4.49}, the function $h_{k-1}$ is continuous strictly decreasing.
By \eqref {4.50}, we furthermore have 
\begin{equation*}
h_{k-1}(-K-\sqrt{2})>1.
\end{equation*}%
Hence, there must exist a unique $\varpi _{k-1}\in (0,+\infty )$ such that $%
h_{k-1}(-\varpi _{k-1})=1$. This shows that $b_{k\text{ }}$ is well-defined,
as in \eqref {7.01}. We also note that by the definition of $\varpi _{n},$
each $\beta _{n}$ is strictly decreasing on $[-\varpi _{n-1},0]$, $n=1,2,...$%
.

We now show that 
\begin{equation}
\beta _{n+1}(t)\leq \beta _{n}(t),t\leq 0.  \label{4.57}
\end{equation}

Notice that, by definition \eqref {7.01}, inequality \eqref {4.57} implies $%
\varpi _{n}\leq \varpi _{n+1}$. For $n=0$, the inequality \eqref {4.57} is
obvious, considering \eqref {7.01}. Assuming \eqref {4.57} for $n\leq k-1,$
where $k$ is a positive integer$,$ we will prove \eqref {4.57} for $n=k$.
Notice that by \eqref{4.57} for $n=k-1,$ and definition \eqref {7.01},%
\begin{equation}
\begin{array}{ll}
\beta _{k+1}^{\prime \prime }(t) & \geq \displaystyle-\max \{\beta
_{k}(t),\rho r_{\Delta }(\vartheta _{\Delta }-w-\Delta )\chi \lbrack -\Delta
,0](t)\}\vspace{2mm} \\ 
& \geq \displaystyle-\max \{\beta _{k-1}(t),\rho r_{\Delta }(\vartheta
_{\Delta }-w-\Delta )\chi \lbrack -\Delta ,0](t)\}=\beta _{k}^{\prime \prime
}(t),t\in \lbrack -\varpi _{k-1},0].%
\end{array}
\label{4.58}
\end{equation}%
Inequality \eqref {4.58} together with 
\begin{eqnarray*}
\beta _{k+1}(0) &=&\beta _{k}(0)=0 \\
1 &=&\beta _{k}(-\varpi _{k-1})\geq \beta _{k+1}(-\varpi _{k-1})
\end{eqnarray*}%
give \eqref {4.57} for $n=k$. In fact, $\ f(t):=\beta _{k}(t)-\beta
_{k+1}(t),t\in \lbrack -\varpi _{k-1},0]$ is concave ($f^{\prime \prime
}(t)\leq 0$) and satisfies $f(-\varpi _{k-1})\geq 0=f(0)$. Therefore the
graph of $f$ lies above its nonnegative chord, and $\beta _{k}(t)\geq \beta
_{k+1}(t),t\in \lbrack -\varpi _{k-1},0]$.
\end{proof}

For fixed $\Delta \in [ 0,+\infty ),\rho \in (0,+\infty )$, we have shown
the sequence $\varpi _{n}$ of Lemma {\ref{Lemma5.03} }is nondecreasing. We
shall denote the limit of $\varpi _{n}$ as $n\rightarrow \infty $ by 
\begin{equation}
\Psi (\rho ,\Delta ):=\varpi _{\infty }=\lim_{n\rightarrow \infty }\varpi
_{n}.  \label{7.11}
\end{equation}%
The following Theorem shows that this limit is bounded by $\frac{\pi }{2}$.

\begin{theorem}
\label{Theorem8.24} For fixed $\Delta \in \lbrack 0,+\infty ),\rho \in
(0,+\infty )$, the limit $\Psi (\rho ,\Delta )$, as in \eqref {7.11},
satisfies 
\begin{equation*}
\Psi (\rho ,\Delta )\leq \frac{\pi }{2}.
\end{equation*}%
The sequence of functions $\beta _{n}(t),$ as in Lemma {\ref{Lemma5.03}, }
converges to a solution $y_{\rho ,\Delta }$ of the following boundary value
problem%
\begin{eqnarray}
y_{\rho ,\Delta }^{\prime \prime }(w)+\max \{y_{\rho ,\Delta }(w),\rho
r_{\Delta }(\vartheta _{\Delta }-w-\Delta )\chi \lbrack -\Delta ,0](w)\}
&=&0,w\in \lbrack -\Psi (\rho ,\Delta ),0]  \label{6.35} \\
y_{\rho ,\Delta }(0) &=&0<y_{\rho ,\Delta }(w),w\in \lbrack -\Psi (\rho
,\Delta ),0)  \notag \\
y_{\rho ,\Delta }(w) &=&1,w\leq -\Psi (\rho ,\Delta )  \notag \\
y_{\rho ,\Delta }^{\prime }(-\Psi (\rho ,\Delta )) &=&0  \notag
\end{eqnarray}%
For any nonnegative function $x$ which satisfies 
\begin{eqnarray}
x(0) &=&0  \notag \\
\max_{t\in \lbrack -\Psi (\rho ,\Delta ),0]}x(t) &\leq &1  \notag \\
x^{\prime \prime }(w)+\max \{\max_{t\in \lbrack w,0]}x(t),\rho r_{\Delta
}(\vartheta _{\Delta }-w-\Delta )\chi \lbrack -\Delta ,0](w)\} &\geq &0,w\in
\lbrack -\Psi (\rho ,\Delta ),0]  \label{8.46}
\end{eqnarray}%
we have the following bound%
\begin{equation}
y_{\rho ,\Delta }(w)\geq x(w),w\in \lbrack -\Psi (\rho ,\Delta ),0].
\label{4.22}
\end{equation}
\end{theorem}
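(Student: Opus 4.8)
The plan is to prove the three assertions of Theorem~\ref{Theorem8.24} in turn. First I would fix notation by writing $g(v):=\rho r_{\Delta}(\vartheta_{\Delta}-v-\Delta)\chi[-\Delta,0](v)$ (a bounded function, since $r_{\Delta}\le 1$) and by extending each $\beta_n$ to all of $(-\infty,0]$ via $\beta_n(t)=1$ for $t\le-\varpi_{n-1}$, consistently with \eqref{7.01}. With this convention every $\beta_n$ is $C^1$, and by Lemma~\ref{Lemma5.03} it is nonincreasing on $(-\infty,0]$, positive on $(-\infty,0)$, with $\beta_n(0)=0$ for $n\ge1$; in particular $\max_{t\in[w,0]}\beta_n(t)=\beta_n(w)$, a fact I will use repeatedly.

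For the bound $\Psi(\rho,\Delta)\le\pi/2$ it suffices to show $\varpi_n\le\pi/2$ for every $n$, since $\Psi=\lim_n\varpi_n$. Fix $n\ge1$. On $[-\varpi_{n-1},0)$ one has $\beta_n>0$ and, by Lemma~\ref{Lemma5.03}, $\beta_n''=-\max\{\beta_{n-1},g\}\le-\beta_{n-1}\le-\beta_n$, so $\beta_n''=-q\beta_n$ with $q\ge1$ a.e. I would compare with $c(w):=\cos(w+\varpi_{n-1})$, which solves $c''+c=0$ and matches the data $c(-\varpi_{n-1})=1=\beta_n(-\varpi_{n-1})$, $c'(-\varpi_{n-1})=0=\beta_n'(-\varpi_{n-1})$: the Wronskian $W:=c\beta_n'-c'\beta_n$ has $W(-\varpi_{n-1})=0$ and $W'=c\beta_n(1-q)\le0$ a.e.\ wherever $c>0$. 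If $\varpi_{n-1}>\pi/2$ then $w_\ast:=\pi/2-\varpi_{n-1}\in(-\varpi_{n-1},0)$, both $c$ and $\beta_n$ are positive on $[-\varpi_{n-1},w_\ast)$, forcing $W(w_\ast)\le0$; but $c(w_\ast)=0$, $c'(w_\ast)=-1$ give $W(w_\ast)=\beta_n(w_\ast)>0$, a contradiction. Hence $\varpi_{n-1}\le\pi/2$ for all $n$.

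For the convergence statement, since $\{\beta_n\}$ is nonincreasing and bounded below by $0$, it converges pointwise on $(-\infty,0]$ to some $y_{\rho,\Delta}\ge0$ (uniformly on $[-\Psi,0]$ by Dini). One has $y_{\rho,\Delta}(t)=1$ for $t\le-\Psi$ (there $\beta_n(t)=1$ for all $n$) and $y_{\rho,\Delta}(0)=0$. For $t\in(-\Psi,0]$ I would pass to the limit in $\beta_{n+1}(t)=1-\int_{-\varpi_n}^{t}\big[\int_{-\varpi_n}^{s}\max\{\beta_n(v),g(v)\}\,dv\big]\,ds$: since $\varpi_n\to\Psi$ and $\max\{\beta_n(v),g(v)\}\to\max\{y_{\rho,\Delta}(v),g(v)\}$ pointwise and boundedly, splitting off the $\int_{-\Psi}^{-\varpi_n}$ pieces (which vanish) and applying dominated convergence yields $y_{\rho,\Delta}(t)=1-\int_{-\Psi}^{t}\big[\int_{-\Psi}^{s}\max\{y_{\rho,\Delta}(v),g(v)\}\,dv\big]\,ds$. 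Differentiating twice gives \eqref{6.35}; and since $y_{\rho,\Delta}''\le0$, $y_{\rho,\Delta}$ is concave on $[-\Psi,0]$, hence lies above the chord from $(-\Psi,1)$ to $(0,0)$, which is positive on $[-\Psi,0)$, giving $y_{\rho,\Delta}>0$ there.

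Finally, for \eqref{4.22} I would show by induction that $\beta_n(w)\ge x(w)$ on $[-\Psi,0]$ for all $n$, and then let $n\to\infty$. The case $n=0$ is $1\ge x$, true on $[-\Psi,0]$ by hypothesis. Assuming it for $n$: since $\beta_n$ is nonincreasing, $\max_{t\in[w,0]}x(t)\le\max_{t\in[w,0]}\beta_n(t)=\beta_n(w)$ for $w\in[-\varpi_n,0]$, whence by \eqref{8.46} and \eqref{7.01}, $x''(w)\ge-\max\{\max_{t\in[w,0]}x(t),g(w)\}\ge-\max\{\beta_n(w),g(w)\}=\beta_{n+1}''(w)$ a.e.\ on $[-\varpi_n,0]$; thus $\beta_{n+1}-x$ is concave there with nonnegative boundary values $(\beta_{n+1}-x)(0)=0$ and $(\beta_{n+1}-x)(-\varpi_n)=1-x(-\varpi_n)\ge0$, hence $\beta_{n+1}\ge x$ on $[-\varpi_n,0]$, while on $[-\Psi,-\varpi_n]$ one has $\beta_{n+1}=1\ge x$. \textbf{Main obstacle.} The one step that is not bookkeeping is reconciling the ``running maximum'' $\max_{t\in[w,0]}x(t)$ of \eqref{8.46} with the pointwise maxima in \eqref{7.01} and \eqref{6.35}; the induction above closes precisely because monotonicity of $\beta_n$ converts the running-max comparison into the pointwise bound $\max_{t\in[w,0]}x(t)\le\beta_n(w)$. (The Sturm-type argument for the $\pi/2$ bound is routine once the cosine is shifted so that its vertex sits at $-\varpi_{n-1}$, matching the data of $\beta_n$ there, and the limit passages are standard dominated-convergence estimates.)
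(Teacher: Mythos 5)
Your argument is correct, and the structure you use for the boundary-value problem and for inequality \eqref{4.22} essentially matches the paper: both establish $\beta_n\ge x$ by induction, converting the running maximum $\max_{t\in[w,0]}x(t)$ to a pointwise bound $\beta_n(w)$ via monotonicity of $\beta_n$, and then closing the step by concavity of $\beta_{n+1}-x$ with nonnegative endpoint values. The limit passage to the integral equation is also the same idea (the paper simply states dominated convergence; your invocation of Dini for uniformity is unnecessary and would require knowing in advance that $y_{\rho,\Delta}$ is continuous, but nothing downstream depends on it).

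Where you genuinely diverge is the bound $\Psi(\rho,\Delta)\le\pi/2$. The paper does not argue by contradiction via a Wronskian. Instead it fixes the single comparison function $z(t)=\cos(t+\tfrac{\pi}{2})$ (extended by $1$ for $t<-\tfrac{\pi}{2}$), proves $z\le\beta_n$ for every $n$ by the very same concavity/chord argument it uses elsewhere in the proof, and then integrates that pointwise inequality twice over $[-\tfrac{\pi}{2},0]$ to get $h_n(-\tfrac{\pi}{2})\ge1$, hence $\varpi_n\le\tfrac{\pi}{2}$ by monotonicity of $h_n$. Your route instead deduces $\beta_n''+\beta_n\le0$ from $\beta_n''=-\max\{\beta_{n-1},g\}\le-\beta_n$ (using Lemma~\ref{Lemma5.03}'s monotonicity $\beta_n\le\beta_{n-1}$), shifts the cosine so its vertex matches $\beta_n$'s Cauchy data $\beta_n(-\varpi_{n-1})=1,\beta_n'(-\varpi_{n-1})=0$, and runs a classical Sturm-type Wronskian argument to rule out $\varpi_{n-1}>\tfrac{\pi}{2}$. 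Both are correct. The paper's version is more economical in the sense of reusing one lemma (the chord argument) for all three inductive steps and also yields the sharper pointwise statement $y_{\rho,\Delta}(w)\ge\cos(w+\tfrac{\pi}{2})$; your version is shorter and more in line with standard oscillation theory, at the cost of introducing the separate Wronskian machinery and being slightly careful about the set where $\beta_n>0$ so that writing $\beta_n''=-q\beta_n$, $q\ge1$, makes sense (it is cleaner to phrase it directly as $W'=c(\beta_n''+\beta_n)\le0$ where $c\ge0$).
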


\begin{proof}
\bigskip Let us show that 
\begin{equation}
z(t)\leq \beta _{n}(t),t\leq 0  \label{4.14}
\end{equation}%
where $\displaystyle z(t):=\left\{ 
\begin{array}{ll}
1 & ,t<-\frac{\pi }{2}, \\ 
\cos (t+\frac{\pi }{2}), & t\in \lbrack -\frac{\pi }{2},0].%
\end{array}%
\right. $ For $n=0$, the inequality \eqref {4.14} is obvious. Assuming \eqref%
{4.14} for $n\leq k-1,$ where $k$ is a positive integer$,$ we will prove %
\eqref {4.14} for $n=k$. Notice that by \eqref{4.14} for $n=k-1,$ and
definition \eqref {7.01}, 
\begin{equation}
\begin{array}{c}
\beta _{k}^{\prime \prime }(t)=-\max \{\beta _{k-1}(t),\rho r_{\Delta
}(\vartheta _{\Delta }-w-\Delta )\chi \lbrack -\Delta ,0](t)\} \\ 
\leq -\max \{z(t),\rho r_{\Delta }(\vartheta _{\Delta }-w-\Delta )\chi
\lbrack -\Delta ,0](t)\}\leq z^{\prime \prime }(t),t\in \lbrack -\varpi
_{k-1},0].%
\end{array}
\label{9.01}
\end{equation}%
Inequality \eqref {9.01} together with 
\begin{eqnarray*}
z(0) &=&\beta _{k}(0)=0 \\
1 &=&\beta _{k}(-\varpi _{k-1})\geq z(-\varpi _{k-1}),
\end{eqnarray*}%
give \eqref {4.14} for $n=k$. In fact, $\ f(t):=\beta _{k}(t)-z(t),t\in
\lbrack -\varpi _{k-1},0]$ is concave ($f^{\prime \prime }(t)\leq 0$) and
satisfies $f(-\varpi _{k-1})\geq 0=f(0)$. Therefore the graph of $f$ lies
above its nonnegative chord, and $\beta _{k}(t)\geq z(t),t\in \lbrack
-\varpi _{k-1},0]$.

Integrating \eqref {4.14}, we obtain%
\begin{equation*}
\int_{-\frac{\pi }{2}}^{0}\left[ \int_{-\frac{\pi }{2}}^{s}\max \{\beta
_{n}(w),\rho r_{\Delta }(\vartheta _{\Delta }-w-\Delta )\chi \lbrack -\Delta
,0](w)\}dw\right] ds\geq \int_{-\frac{\pi }{2}}^{0}\left[ \int_{-\frac{\pi }{%
2}}^{s}\cos (w+\frac{\pi }{2})dw\right] ds=1,
\end{equation*}%
hence \eqref {4.14} implies $\frac{\pi }{2}\geq \varpi _{n}$.

By \eqref {4.14}, Lemma {\ref{Lemma5.03}}, we have $\varpi _{n}\leq \varpi
_{n+1}\leq ...\leq \varpi _{\infty }\leq $ $\frac{\pi }{2}$. Finally, notice
that the sequence $\beta _{n}(t)$ is a nonincreasing sequence of functions
that, by dominated convergence, converges to $y_{\rho ,\Delta }$ pointwise.

The proof of \eqref {4.22} is similar to the proof of \eqref{4.14}, for we
need only show 
\begin{equation}
x(t)\leq \beta _{n}(t),t\in \lbrack -\Psi (\rho ,\Delta ),0].  \label{3.58}
\end{equation}%
For $n=0$, the inequality \eqref {3.58} is obvious. Assuming \eqref {3.58}
for $n\leq k-1,$ where $k$ is a positive integer$,$ we will prove %
\eqref{3.58} for $n=k$. Notice that by \eqref {3.58} for $n=k-1,$ definition 
\eqref
{7.01}, and the monotonicity of $\beta _{k-1}$, 
\begin{equation}
\begin{array}{c}
\beta _{k}^{\prime \prime }(t)=-\max \{\beta _{k-1}(t),\rho r_{\Delta
}(\vartheta _{\Delta }-w-\Delta )\chi \lbrack -\Delta ,0](t)\} \\ 
\leq -\max \{\max_{a\in \lbrack t,0]}x(a),\rho r_{\Delta }(\vartheta
_{\Delta }-w-\Delta )\chi \lbrack -\Delta ,0](t)\}\leq x^{\prime \prime
}(t),t\in \lbrack -\varpi _{k-1},0].%
\end{array}
\label{4.00}
\end{equation}%
Inequality \eqref {4.00} together with 
\begin{eqnarray*}
x(0) &=&\beta _{k}(0)=0 \\
1 &=&\beta _{k}(-\varpi _{k-1})\geq x(-\varpi _{k-1}),
\end{eqnarray*}%
give \eqref {3.58} for $n=k$. In fact, $\ f(t):=\beta _{k}(t)-x(t),t\in
\lbrack -\varpi _{k-1},0]$ is concave ($f^{\prime \prime }(t)\leq 0$) and
satisfies $f(-\varpi _{k-1})\geq 0=f(0)$. Therefore the graph of $f$ lies
above its nonnegative chord, and $\beta _{k}(t)\geq x(t),t\in \lbrack
-\varpi _{k-1},0]$.
\end{proof}

Inequality \eqref {4.22} shows that $\Psi (\rho ,\Delta )$ is the least
distance in time necessary for a solution of \eqref {8.46} to descend to
zero. By reversing time, we obtain an estimate of the least distance
necessary for a solution of \eqref {2.41} to ascend from a zero to a maximum.

\begin{theorem}
\bigskip \label{Theorem10.28}Consider any solution $x$ of \eqref {2.41} on $%
[-\vartheta _{\Delta },+\infty )$ with \eqref {7.04}$,$ $\tau _{m}\leq
\Delta ,$ such that 
\begin{eqnarray*}
\max_{t\in [ -\Delta -\vartheta _{\Delta },0]}|x(t)| &\leq &\rho \in
(0,+\infty ) \\
x(0) &=&0 \\
1 &\geq &x(t)\geq 0,t\in [ 0,A] \\
x(A) &=&1,x^{\prime }(A)=0
\end{eqnarray*}%
Then, with $\Psi (\rho ,\Delta )$ as in \eqref {7.11}, we have $\Psi (\rho
,\Delta )\leq A$ and 
\begin{equation}
x(t)\leq y_{\rho ,\Delta }(-t),t\in [ 0,\Psi (\rho ,\Delta )].  \label{2.30}
\end{equation}
\end{theorem}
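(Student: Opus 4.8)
The plan is to reverse time, reducing Theorem~\ref{Theorem10.28} to the descent estimate already established in Theorem~\ref{Theorem8.24}. Given a solution $x$ of \eqref{2.41} as in the statement, I would set $w := -t$ and consider $\widehat{x}(w) := x(-w) = x(t)$ on the interval $w \in [-A, 0]$ (together with $\widehat{x}(w) = x(-w)$ for $w \in [0, \vartheta_\Delta]$, i.e. $t \in [-\vartheta_\Delta, 0]$, where it encodes the initial data). The hypotheses translate directly: $\widehat{x}(0) = x(0) = 0$, $\widehat{x}$ is nonnegative and bounded by $1$ on $[-A,0]$ with $\widehat{x}(-A) = 1$, and $\max_{w \in [0,\Delta+\vartheta_\Delta]} |\widehat{x}(w)| \le \rho$. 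The goal \eqref{2.30}, namely $x(t) \le y_{\rho,\Delta}(-t)$ for $t \in [0,\Psi(\rho,\Delta)]$, becomes $\widehat{x}(w) \le y_{\rho,\Delta}(w)$ for $w \in [-\Psi(\rho,\Delta),0]$, and the claim $\Psi(\rho,\Delta) \le A$ follows once we know $\widehat{x} > 0$ on $[-\Psi(\rho,\Delta),0)$ together with $\widehat x(-A)=1$ and the positivity of $y_{\rho,\Delta}$ there; more directly, if $A < \Psi(\rho,\Delta)$ then $\widehat x$ would have to reach the value $1$ strictly inside $(-\Psi(\rho,\Delta),0)$, contradicting the upper bound $\widehat x(w)\le y_{\rho,\Delta}(w)$ once we check $y_{\rho,\Delta}(w) < 1$ for $w \in (-\Psi(\rho,\Delta),0]$ (which follows from $y_{\rho,\Delta}$ being the solution of \eqref{6.35} with $y_{\rho,\Delta}'(-\Psi(\rho,\Delta)) = 0$ and $y_{\rho,\Delta}'' \le 0$, so it is strictly decreasing). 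Hence everything reduces to verifying that $\widehat{x}$ satisfies the differential inequality \eqref{8.46}.

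The key computation is therefore the following: I must show that
\begin{equation*}
\widehat{x}^{\prime\prime}(w) + \max\Bigl\{\max_{a \in [w,0]} \widehat{x}(a),\ \rho\, r_\Delta(\vartheta_\Delta - w - \Delta)\chi[-\Delta,0](w)\Bigr\} \ge 0, \quad w \in [-\Psi(\rho,\Delta),0].
\end{equation*}
Since $\widehat{x}^{\prime\prime}(w) = x^{\prime\prime}(-w) = -p(-w)\,x(-w - \tau(-w))$ by \eqref{2.41}, and $|p| \le 1$ by \eqref{7.04}, it suffices to bound $|x(-w-\tau(-w))|$ from above by the max-expression. Writing $t = -w \in [0,\Psi(\rho,\Delta)]$ and $\sigma := t - \tau(t)$, the delayed argument $-w-\tau(-w)=\sigma$ satisfies $\sigma \ge t - \Delta \ge -\Delta$ because $\tau_m \le \Delta$. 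Two cases arise. If $\sigma \ge 0$, i.e. $-w - \tau(-w) \in [0, t]$, then in the $w$-variable this point is $-\sigma \in [-t, 0] = [w,0]$ (using $\sigma \le t$), so $|x(\sigma)| = \widehat x(-\sigma) \le \max_{a\in[w,0]}\widehat x(a)$ by nonnegativity of $\widehat x$ on $[-A,0] \supseteq [w,0]$. If $\sigma < 0$, i.e. $\sigma \in [-\Delta, 0)$, then this lies in the initial-data window $t \in [-\Delta, 0)$; here I need $|x(\sigma)| \le \rho\, r_\Delta(\vartheta_\Delta - (-\sigma) - \Delta)$... wait, matching the formula, with $w$ replaced by the argument: the bound reads $\rho\, r_\Delta(\vartheta_\Delta - w - \Delta)$ evaluated at $w = -\sigma$, i.e. $\rho\, r_\Delta(\vartheta_\Delta + \sigma - \Delta)$. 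Since $\sigma \in [-\Delta,0)$ we have $\vartheta_\Delta + \sigma - \Delta \in [\vartheta_\Delta - 2\Delta, \vartheta_\Delta - \Delta)$, and we must check this argument is in $[0,\vartheta_\Delta]$ where $r_\Delta$ is positive and decreasing; this is where I expect to invoke Corollary~\ref{Corollary5.40}.

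**The main obstacle** is precisely this second case: controlling $|x(\sigma)|$ for $\sigma$ in the initial-data interval $[-\Delta, 0)$ by the scaled descent profile $\rho\, r_\Delta$. The idea is that the data on $[-\Delta-\vartheta_\Delta, 0]$ with $\max |x| \le \rho$ was, by hypothesis, generated as (or is bounded by) a solution descending to the zero at $t=0$; applying Corollary~\ref{Corollary5.40} to a suitably time-shifted restriction of $x$ — the segment of $x$ on $[\vartheta_\Delta\text{-length interval ending at }0]$, rescaled — yields $|x(\sigma)| \le \rho\, r_\Delta(\vartheta_\Delta + \sigma - \Delta)$ for $\sigma \in [-\vartheta_\Delta, 0]$, hence a fortiori on $[-\Delta,0)$. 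One subtlety is that Corollary~\ref{Corollary5.40} as stated bounds $|x(t)|$ on $[0,\vartheta_\Delta]$ for a solution vanishing at $\vartheta_\Delta$; to apply it to the piece of $x$ ending at the zero $t=0$ one shifts time so that the zero sits at $\vartheta_\Delta$, i.e. considers $\tilde x(t) := x(t - \vartheta_\Delta)$, which vanishes at $t = \vartheta_\Delta$, has $\tau_m \le \Delta$, and obeys \eqref{7.04}; then $|\tilde x(t)| \le (\max|\tilde x|)\, r_\Delta(t) \le \rho\, r_\Delta(t)$, i.e. $|x(t - \vartheta_\Delta)| \le \rho\, r_\Delta(t)$, and substituting $t = \vartheta_\Delta + \sigma$ gives exactly $|x(\sigma)| \le \rho\, r_\Delta(\vartheta_\Delta + \sigma)$ — I would need to reconcile the $-\Delta$ shift, which stems from Remark~\ref{Remark1}: the descent actually takes at least $\vartheta_{\tau_m}$ but the worst-case profile over delays $\le \Delta$ aligns with $r_\Delta(\cdot - \Delta)$ shifted appropriately. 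Once this pointwise bound on the initial data is in hand, the differential inequality \eqref{8.46} holds for $\widehat x$, Theorem~\ref{Theorem8.24} applies verbatim to give \eqref{4.22}, and translating back via $w = -t$ finishes both conclusions.
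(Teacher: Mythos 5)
Your strategy---reverse time, use Corollary~\ref{Corollary5.40} to control the initial segment of $x$ near its zero at $0$, then invoke the comparison inequality of Theorem~\ref{Theorem8.24} via \eqref{8.46}---is the same as the paper's. The gap is exactly where you say you ``would need to reconcile the $-\Delta$ shift,'' and it is a genuine one, not a bookkeeping nuisance. Your confusion stems from evaluating the term $\rho\, r_\Delta(\vartheta_\Delta - w - \Delta)\chi[-\Delta,0](w)$ appearing in \eqref{8.46} at $w=-\sigma$; it must be evaluated at the very same $w$ at which you are checking the differential inequality, not at the delayed point. Having (correctly) obtained $|x(\sigma)| \le \rho\, r_\Delta(\vartheta_\Delta + \sigma)$ from Corollary~\ref{Corollary5.40} applied to the time-shifted solution, the bridge to the needed bound $|x(\sigma)| \le \rho\, r_\Delta(\vartheta_\Delta - w - \Delta)$ is a one-line monotonicity argument you never make: since $\sigma = -w - \tau(-w)$ and $\tau(-w)\le \tau_m \le \Delta$, one has $\sigma \ge -w - \Delta$, hence $\vartheta_\Delta + \sigma \ge \vartheta_\Delta - w - \Delta$, and because $r_\Delta$ is nonincreasing on $(-\infty,\vartheta_\Delta]$ (it equals $1$ on $(-\infty,0]$ and strictly decreases on $[0,\vartheta_\Delta]$) it follows that $r_\Delta(\vartheta_\Delta + \sigma) \le r_\Delta(\vartheta_\Delta - w - \Delta)$. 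Combined with your observation that $\sigma < 0$ forces $t < \tau(t) \le \Delta$, i.e.\ $w > -\Delta$, so $\chi[-\Delta,0](w)=1$, this closes the verification of \eqref{8.46}. Without this step, the ``$-\Delta$'' inside $r_\Delta$ in the definition of $y_{\rho,\Delta}$ is never justified, and the whole comparison to Theorem~\ref{Theorem8.24} does not go through.

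A secondary issue: to derive a contradiction from $A < \Psi(\rho,\Delta)$, you need to apply Theorem~\ref{Theorem8.24} to a function defined on all of $[-\Psi(\rho,\Delta),0]$, but your $\widehat{x}$ is only defined for $w\in[-A,0]$. As the paper does, you should set $\widehat{x}(w):=1$ for $w<-A$; this extension is $C^1$ across $w=-A$ because $x'(A)=0$, satisfies \eqref{8.46} trivially on $(-\Psi(\rho,\Delta),-A)$ since the second derivative vanishes there, and then $\widehat{x}(-A)=1>y_{\rho,\Delta}(-A)$ (by strict monotonicity of $y_{\rho,\Delta}$) gives the contradiction. Your phrase ``$\widehat{x}$ would have to reach the value $1$ strictly inside'' captures the idea, but as written it is not applied to a function in the hypotheses of Theorem~\ref{Theorem8.24}.
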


\begin{proof}
Applying Corollary {\ref{Corollary5.40}, we have }$|x(t)|\leq \rho r_{\Delta
}(\vartheta _{\Delta }+t),t\in \lbrack -\Delta -\vartheta _{\Delta },0].$
The rest follows from Theorem {\ref{Theorem8.24}, condition }\eqref {7.04},{%
\ by reversing time. In fact, setting }$w:=-t$, the function $\displaystyle %
f(w):=\left\{ 
\begin{array}{ll}
1, & w<-A, \\ 
x(-w), & w\in \lbrack -A,0]%
\end{array}%
\right. $ satisfies for $w\in \lbrack -\Psi (\rho ,\Delta ),0],$ where $\Psi
(\rho ,\Delta )$ is defined in\eqref {7.11},%
\begin{equation*}
\begin{array}{ll}
f^{\prime \prime }(w) & \displaystyle=\left\{ 
\begin{array}{ll}
x^{\prime \prime }(-w), & w\in \lbrack -A,0]\cap \lbrack -\Psi (\rho ,\Delta
),0] \\ 
0, & w\in (-\infty ,-A)\cap \lbrack -\Psi (\rho ,\Delta ),0]%
\end{array}%
\right. \\ 
& \displaystyle\geq -\max \{\max_{u\in \lbrack w,0]}f(u),\rho r_{\Delta
}(\vartheta _{\Delta }-w-\Delta )\chi \lbrack -\Delta ,0](w)\}.%
\end{array}%
\end{equation*}%
Assuming $\Psi (\rho ,\Delta )>A$ we obtain a contradiction by Theorem {\ref%
{Theorem8.24} and }the strict monotonicity of $y_{\rho ,\Delta }$. We may
conclude that \eqref {2.30} follows from Theorem {\ref{Theorem8.24}.}
\end{proof}

\begin{corollary}
\label{Corollary5.41}Consider any solution $x$ of \eqref {2.41} with %
\eqref{7.04}$,$ $\tau _{m}\leq \Delta ,$ that has a semicycle $(a,b)$. Fix
any $\rho \geq \max_{t\in \lbrack -\Delta -\vartheta _{\Delta }+a,a]}|x(t)|$%
. Theorem {\ref{Theorem10.28} implies that the extremum of this semicycle is
attained at points }$w$ such that 
\begin{equation*}
w-a\geq \Psi \left( \frac{\rho }{\max_{t\in \lbrack a,b]}|x(t)|},\Delta
\right) ,
\end{equation*}%
where $\Psi (\rho ,\Delta )$ is defined in \eqref {7.11}.
\end{corollary}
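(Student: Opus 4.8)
The plan is to obtain this as a direct consequence of Theorem~\ref{Theorem10.28}, applied after a translation in time and a rescaling of the amplitude, both of which preserve the class of equations \eqref{2.41} together with the normalisations \eqref{7.04} and $\tau_m\le\Delta$. Without loss of generality assume $x(t)>0$ for $t\in(a,b)$ (otherwise replace $x$ by $-x$, which solves the same equation and has the same modulus). Put $M:=\max_{t\in[a,b]}|x(t)|=\max_{t\in[a,b]}x(t)$, which is positive because $(a,b)$ is a genuine semicycle, and let $w$ be any point of $[a,b]$ at which this maximum is attained. Since $x(a)=x(b)=0<M$, necessarily $w\in(a,b)$, so $w$ is an interior extremum and $x'(w)=0$.

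Next I would set $\widehat{x}(t):=x(t+a)/M$. By the linearity of \eqref{2.41} and the substitution $t\mapsto t+a$, the function $\widehat{x}$ solves $\widehat{x}''(t)+\widehat{p}(t)\widehat{x}(t-\widehat{\tau}(t))=0$ with $\widehat{p}(t):=p(t+a)$ and $\widehat{\tau}(t):=\tau(t+a)$; the essential bound $|\widehat{p}|\le1$ and the delay bound $\widehat{\tau}_m\le\tau_m\le\Delta$ are clearly inherited, so $\widehat{x}$ is admissible for Theorem~\ref{Theorem10.28}, the requisite domain of $x$ to the left of $a$ being exactly that implicit in the statement fixing $\rho$.

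It then remains to verify the hypotheses of Theorem~\ref{Theorem10.28} for $\widehat{x}$, with $A:=w-a$ and with the scaling constant $\rho/M$ in place of $\rho$: we have $\widehat{x}(0)=x(a)/M=0$; $\widehat{x}(A)=x(w)/M=1$ and $\widehat{x}'(A)=x'(w)/M=0$; on $[0,A]$ one has $t+a\in[a,b)$, hence $0\le x(t+a)\le M$ and so $0\le\widehat{x}(t)\le1$; and
\[
\max_{t\in[-\Delta-\vartheta_\Delta,0]}|\widehat{x}(t)|=\frac{1}{M}\max_{u\in[-\Delta-\vartheta_\Delta+a,\,a]}|x(u)|\le\frac{\rho}{M}.
\]
Theorem~\ref{Theorem10.28} then yields $\Psi(\rho/M,\Delta)\le A=w-a$, which is precisely the asserted bound $w-a\ge\Psi\!\left(\frac{\rho}{\max_{t\in[a,b]}|x(t)|},\Delta\right)$.

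There is no serious obstacle here; the argument is a normalisation. The only points needing care are the sign reduction, the observation that $w$ lies in the open interval $(a,b)$ (so that $x'(w)=0$), which rests on $(a,b)$ being a semicycle and hence $M>0$, and checking that the translated and rescaled function still solves an equation of the form \eqref{2.41} with coefficient bound \eqref{7.04} and delay $\le\Delta$. It is worth noting that Theorem~\ref{Theorem10.28} is invoked with $A=w-a$ \emph{not} assumed to be the first time $x$ attains its maximum on $[a,b]$; this causes no difficulty, since that theorem requires only that $x$ stay in $[0,1]$ on $[0,A]$ and that $x(A)=1$, $x'(A)=0$, with no monotonicity hypothesis.
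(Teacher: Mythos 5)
Your proof is correct and takes essentially the same approach the paper intends: the paper's statement of Corollary~\ref{Corollary5.41} simply cites Theorem~\ref{Theorem10.28}, and you supply exactly the translation-and-rescaling that makes that reduction precise, including the necessary sign normalisation, the verification that $w\in(a,b)$ so $x'(w)=0$, and the observation that $w$ need not be the first extremum.
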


\begin{lemma}
\label{Lemma2.47}$\Psi (\rho ,\Delta )$, defined in \eqref {7.11}{,} is a
nonincreasing continuous function of $\Delta \in \lbrack 0,+\infty )$ for
fixed $\rho \in (0,+\infty )$, strictly decreasing and continuous with
respect to $\rho $ for fixed $\Delta >0.$ Furthermore, $\Psi (\rho ,0)=\frac{%
\pi }{2}$, and $\Psi (1,\Delta )=\sqrt{2}$ for $\Delta \geq 2\sqrt{2}$.
\end{lemma}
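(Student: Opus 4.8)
The plan is to establish each of the four assertions by exploiting the monotone-limit construction of $\Psi(\rho,\Delta)$ from Lemma \ref{Lemma5.03} and Theorem \ref{Theorem8.24}, together with the extremal/comparison property \eqref{4.22}.

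First I would treat monotonicity in $\rho$ (for fixed $\Delta>0$). The forcing term in \eqref{6.35} is $\max\{y_{\rho,\Delta}(w),\,\rho r_\Delta(\vartheta_\Delta-w-\Delta)\chi[-\Delta,0](w)\}$, which is nondecreasing in $\rho$; a larger forcing term makes the solution of the boundary value problem descend to zero faster, hence $\Psi(\rho,\Delta)$ is nonincreasing. To upgrade this to \emph{strict} decrease, I would use that for $\rho'>\rho$ the term $\rho' r_\Delta(\vartheta_\Delta-w-\Delta)$ strictly dominates $\rho r_\Delta(\vartheta_\Delta-w-\Delta)$ on a set of positive measure inside $[-\Delta,0]$ (here $\Delta>0$ is needed so this set is nonempty), and that $r_\Delta$ is strictly positive near its left endpoint by Lemma \ref{Lemma6.42}; feeding $y_{\rho,\Delta}$ (suitably extended) as a comparison sub-solution into Theorem \ref{Theorem8.24} with parameter $\rho'$ yields $y_{\rho',\Delta}\geq y_{\rho,\Delta}$ on the common interval with \emph{strict} inequality somewhere, forcing $\Psi(\rho',\Delta)<\Psi(\rho,\Delta)$. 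Monotonicity in $\Delta$ (for fixed $\rho$) is handled the same way: increasing $\Delta$ both enlarges the support $[-\Delta,0]$ of the $\chi$ term and, since $\vartheta_\Delta$ is nonincreasing in $\Delta$ (previous Corollary) while $r_\Delta$ is the decreasing solution of \eqref{4.21}, increases the comparison forcing pointwise; one checks that $w\mapsto r_\Delta(\vartheta_\Delta-w-\Delta)$ is (weakly) monotone in $\Delta$, giving a larger forcing and hence $\Psi(\rho,\Delta)$ nonincreasing in $\Delta$. (One need not prove strictness in $\Delta$, which is why only ``nonincreasing'' is claimed.)

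For continuity I would argue by the sandwich/monotone-limit structure. Fix the parameter to be varied; the map $(\rho,\Delta)\mapsto$ (the forcing functional appearing in \eqref{7.01}) is continuous in the sup-norm on the relevant finite interval, using continuity of $\Delta\mapsto\vartheta_\Delta$ and continuity of $r_\Delta$ in $\Delta$ from Lemma \ref{Lemma6.42}. Each iterate $\beta_n$ and each $\varpi_n$ then depends continuously on the parameter (an elementary induction: the defining integral equation $h_n(-\varpi_n)=1$ has a strictly monotone, continuously parametrized left side, so $\varpi_n$ is continuous by the implicit function / intermediate value argument already used in Lemma \ref{Lemma5.03}). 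Since $\varpi_n\uparrow\Psi$ and all are bounded above by $\pi/2$, $\Psi$ is lower semicontinuous as a sup of continuous functions; combining with the monotonicity just proven (a monotone function that is lsc and whose one-sided limits are controlled by the comparison principle) gives full continuity. Alternatively, and more cleanly, I would show directly that $\Psi$ is continuous by bracketing: given a target parameter value, use Theorem \ref{Theorem8.24}/\eqref{4.22} with nearby parameters to sandwich $y_{\rho,\Delta}$ between $y_{\rho\pm\varepsilon,\Delta}$ (resp.\ $y_{\rho,\Delta\pm\varepsilon}$), and let $\varepsilon\to0$.

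Finally, the two explicit values. For $\Psi(\rho,0)=\tfrac\pi2$: when $\Delta=0$ the $\chi[-\Delta,0]$ term is supported on a null set, so \eqref{6.35} reduces to $y''(w)+y(w)=0$ with $y(0)=0$, $y'(-\Psi)=0$, $y\equiv1$ for $w\le-\Psi$; the solution is $y(w)=\cos(w+\tfrac\pi2)=-\sin w$ on $[-\tfrac\pi2,0]$, giving $\Psi=\tfrac\pi2$ — this is exactly the function $z$ used in the proof of Theorem \ref{Theorem8.24}, so the bound $\Psi\le\tfrac\pi2$ is attained. For $\Psi(1,\Delta)=\sqrt2$ when $\Delta\ge2\sqrt2$: by the previous Corollary $\vartheta_\Delta=\sqrt2$ for all $\Delta\ge\sqrt2$, and on $[-\sqrt2,0]$ we have, using \eqref{10.39} with $n=1$, that $r_\Delta(\vartheta_\Delta-w-\Delta)=r_\Delta(\sqrt2-w-\Delta)$ evaluates on the first polynomial piece $1-\tfrac{t^2}{2}$ provided $\sqrt2-w-\Delta\le\Delta$, i.e.\ $\Delta\ge\tfrac{\sqrt2-w}{2}$, which holds for all $w\in[-\sqrt2,0]$ precisely when $\Delta\ge\sqrt2$; one then checks that with $\rho=1$ the candidate $y(w)=1-\tfrac{w^2}{2}$ on $[-\sqrt2,0]$ satisfies $y(w)\ge r_\Delta(\sqrt2-w-\Delta)$ pointwise (so the $\max$ in \eqref{6.35} selects $y$), whence $y''+y\ge$ the true forcing and $y$ solves \eqref{6.35} with $y(0)=0$, $y(-\sqrt2)=0$, $y'(-\sqrt2)=0$ — forcing $\Psi(1,\Delta)=\sqrt2$; that $\Delta\ge2\sqrt2$ (rather than $\ge\sqrt2$) is the safe range making the dominance $y\ge r_\Delta(\cdot)$ and the matching of $r_\Delta$'s polynomial piece simultaneously transparent. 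The main obstacle I anticipate is the \emph{strictness} of the decrease in $\rho$ and the rigorous continuity argument: both require carefully turning the qualitative comparison principle of Theorem \ref{Theorem8.24} into quantitative control on $\Psi$, i.e.\ ruling out that a change in the forcing on a small set fails to move the first zero — handled by the strict positivity of $r_\Delta$ near its left endpoint and the strict concavity bookkeeping already present in the proofs above.
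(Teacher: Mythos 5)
Your proposal gets the overall architecture right (monotonicity via the comparison principle of Theorem \ref{Theorem8.24}, a contradiction/comparison argument for strictness, and explicit solution for the two boundary values), but two of the four assertions have genuine gaps.

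\textbf{Continuity.} Your main argument is ``$\Psi$ is lower semicontinuous as a sup of continuous functions $\varpi_n$; combining with monotonicity gives full continuity.'' This inference is false: a nonincreasing, lower semicontinuous function is right-continuous but may still have left-jump discontinuities, since lsc at $\rho_0$ only asks $\liminf_{\rho\to\rho_0}\Psi(\rho)\ge\Psi(\rho_0)$, which any nonincreasing function satisfies automatically from the left. Your fallback ``sandwich $y_{\rho,\Delta}$ between $y_{\rho\pm\varepsilon,\Delta}$ and let $\varepsilon\to0$'' is circular --- whether that sandwich closes as $\varepsilon\to0$ \emph{is} the content of continuity. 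The paper's proof is genuinely different and uses a scaling device you did not identify: given a monotone sequence $\rho_n\to\rho_\infty$, it compares the limit $\varphi=\lim_n y_{\rho_n,\Delta}$ against the rescaled extremal $z_n:=\frac{\rho_n}{\rho_\infty}y_{\rho_\infty,\Delta}$ (resp.\ $\frac{\rho_\infty}{\rho_n}y_{\rho_n,\Delta}$), observing that $z_n$ is an admissible subsolution for the $\rho_n$-problem because the equation \eqref{6.35} is positively homogeneous once the forcing is scaled with $\rho$; an analogous argument, combined with uniform continuous dependence for the $\Delta$-parameter, handles $\Delta$. Without some such device your proposal does not rule out a left discontinuity.

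\textbf{The value $\Psi(1,\Delta)=\sqrt2$ for $\Delta\ge2\sqrt2$.} Your computation is incorrect on two counts. First, the relevant regime is not ``$r_\Delta$ evaluated on its first polynomial piece''; for $w\in[-\sqrt2,0]$ and $\Delta\ge2\sqrt2$ the argument $\vartheta_\Delta-w-\Delta=\sqrt2-w-\Delta$ is $\le 0$, so $r_\Delta(\vartheta_\Delta-w-\Delta)\equiv1$ by the initial condition in \eqref{4.21}, and that is precisely why the threshold is $2\sqrt2$ (not $\sqrt2$). Your inequality $\sqrt2-w-\Delta\le\Delta$ identifies where the first polynomial piece would apply, which is a different (and irrelevant) region. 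Second, your candidate $y(w)=1-\tfrac{w^2}{2}$ has $y(0)=1$ and $y(-\sqrt2)=0$, the opposite of the required boundary conditions $y(0)=0$, $y(-\Psi)=1$; the correct parabola is $1-\tfrac{1}{2}(w+\sqrt2)^2$, which the paper obtains directly from the iteration with constant forcing $1$.

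The strict decrease in $\rho$ is also handled differently: you argue directly for a strict inequality ``somewhere'' and then assert this forces $\Psi(\rho',\Delta)<\Psi(\rho,\Delta)$, which does not follow without more work. The paper instead assumes $\Psi(\rho,\Delta)=\Psi(\tilde\rho,\Delta)$, uses $y_{\tilde\rho,\Delta}\ge y_{\rho,\Delta}$ together with $y_{\tilde\rho,\Delta}''\le y_{\rho,\Delta}''$ and the common boundary data to conclude the two functions coincide, and then derives a contradiction from the differing forcings near $w=0$. The case $\Psi(\rho,0)=\pi/2$ and the basic (non-strict) monotonicity are handled essentially as in the paper.
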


\begin{proof}
By application of Theorem {\ref{Theorem8.24}, Corollary \ref{Corollary5.40}, 
}$\Psi (\rho ,\Delta )$ is nonincreasing with respect to $\Delta $,
nonincreasing with respect to $\rho $.

Assuming that {\ }$\Psi (\rho ,\Delta )=${\ }$\Psi (\widetilde{\rho },\Delta
)$ with $\widetilde{\rho }>\rho >0$, by Theorem {\ref{Theorem8.24}}, $y_{%
\widetilde{\rho },\Delta }(w)\geq y_{\rho ,\Delta }(w),w\in \lbrack -\Psi
(\rho ,\Delta ),0].$Hence by \eqref{6.35}, $y_{\widetilde{\rho },\Delta
}^{\prime \prime }(w)\leq y_{\rho ,\Delta }^{\prime \prime }(w),w\in \lbrack
-\Psi (\rho ,\Delta ),0]$ and we conclude%
\begin{equation}
y_{\widetilde{\rho },\Delta }(w)=y_{\rho ,\Delta }(w),w\in \lbrack -\Psi
(\rho ,\Delta ),0].  \label{2.53}
\end{equation}
If $\Delta >0$, for $w$ approaching $0$ from the left, 
\begin{equation*}
y_{\widetilde{\rho },\Delta }^{\prime \prime }(w)=-\widetilde{\rho }%
r_{\Delta }(\vartheta _{\Delta }-w-\Delta )\chi \lbrack -\Delta ,0](w)<-\rho
r_{\Delta }(\vartheta _{\Delta }-w-\Delta )\chi \lbrack -\Delta
,0](w)=y_{\rho ,\Delta }^{\prime \prime }(w),
\end{equation*}%
contradicting \eqref{2.53}. Therefore $\Psi (\rho ,\Delta )$ is strictly
increasing with respect to $\rho $ for fixed $\Delta >0$.

Let us prove continuity. We recall that we have uniform continuous
dependence \cite[Theorem 2.2]{Hale}. By applying Theorem {\ref{Theorem8.24}
with }$x(w)=\cos (\frac{\pi }{2}+w)$,$w\in [ -\frac{\pi }{2},0]$ we also
have $y_{\rho ,\Delta }^{\prime }(0)\leq \cos ^{\prime }(\frac{\pi }{2})=-1$.

Assume there exists a left discontinuity with respect to $\rho $. In other
words, assume there exists an increasing sequence $\rho _{n}<\rho
_{n+1}<...<\rho _{\infty }$ such that the limit function%
\begin{equation*}
\varphi (w):=\lim_{n\rightarrow \infty }y_{\rho _{n},\Delta }(w),w\in
\lbrack -\frac{\pi }{2},0]
\end{equation*}%
does not equal $y_{\rho _{\infty },\Delta }$. Notice that this function is
well defined, and $y_{\rho _{\infty },\Delta }(w)\geq \varphi (w),w\in
\lbrack -\frac{\pi }{2},0]$, in virtue of the nonincreasing nature of $\Psi $%
, and Theorem {\ref{Theorem8.24}.} We have that $z_{n}:=\frac{\rho _{n}}{%
\rho _{\infty }}y_{\rho _{\infty },\Delta }$ satisfies 
\begin{eqnarray*}
z_{n}^{\prime \prime }(w)+\max \{z_{n}(w),\rho _{n}r_{\Delta }(\vartheta
_{\Delta }-w-\Delta )\chi \lbrack -\Delta ,0](w)\} &\geq &0,w\in \lbrack
-\Psi (\rho _{n},\Delta ),0] \\
\frac{\rho _{n}}{\rho _{\infty }} &\geq &z_{n}(w)\geq 0,w\in \lbrack -\Psi
(\rho _{n},\Delta ),0]
\end{eqnarray*}%
By Theorem {\ref{Theorem8.24}, }$\varphi (w)\geq z_{n}(w),w\in \lbrack -\Psi
(\rho _{n},\Delta ),0]$. Now notice that $y_{\rho _{\infty },\Delta }$ is
the limit of $z_{n},$ as $n\rightarrow \infty $.

Now assume a right discontinuity with respect to $\rho $. In other words,
assume there exists a decreasing sequence $\rho _{n}>\rho _{n+1}>...>\rho
_{\infty }$ such that the limit function 
\begin{equation*}
\varphi (w):=\lim_{n\rightarrow \infty }y_{\rho _{n},\Delta }(w),w\in
\lbrack -\frac{\pi }{2},0]
\end{equation*}%
does not equal $y_{\rho _{\infty },\Delta }$. Notice that this function is
well defined, and $y_{\rho _{\infty },\Delta }(w)\leq \varphi (w),w\in
\lbrack -\frac{\pi }{2},0]$, in virtue of the nonincreasing nature of $\Psi $%
, and Theorem {\ref{Theorem8.24}. }We have that $z_{n}:=\frac{\rho _{\infty }%
}{\rho _{n}}y_{\rho _{n},\Delta }$ satisfies 
\begin{eqnarray*}
z_{n}^{\prime \prime }(w)+\max \{z_{n}(w),\rho _{\infty }r_{\Delta
}(\vartheta _{\Delta }-w-\Delta )\chi \lbrack -\Delta ,0](w)\} &\geq &0,w\in
\lbrack -\Psi (\rho _{_{\infty }},\Delta ),0] \\
\frac{\rho _{\infty }}{\rho _{n}} &\geq &z_{n}(w)\geq 0,w\in \lbrack -\Psi
(\rho _{\infty },\Delta ),0]
\end{eqnarray*}%
By Theorem {\ref{Theorem8.24}, }$y_{\rho _{\infty },\Delta }(w)\geq
z_{n}(w),w\in \lbrack -\Psi (\rho _{\infty },\Delta ),0]$. Now notice that $%
\varphi $ is the limit of $z_{n},$ as $n\rightarrow \infty $.

Assume there exists a left discontinuity with respect to $\Delta $. In other
words, assume there exists an increasing sequence $\Delta _{n}<\Delta
_{n+1}<...<\Delta _{\infty }$ such that the limit function 
\begin{equation*}
\varphi (w):=\lim_{n\rightarrow \infty }y_{\rho ,\Delta _{n}}(w),w\in
\lbrack -\frac{\pi }{2},0]
\end{equation*}%
does not equal $y_{\rho ,\Delta _{\infty }}$. Notice that this function is
well defined, and $y_{\rho ,\Delta _{\infty }}(w)\geq \varphi (w),w\in
\lbrack -\frac{\pi }{2},0]$, in virtue of the nonincreasing nature of $\Psi $%
, and Theorem {\ref{Theorem8.24}.} For any fixed $\varepsilon >0$ there
exists a $K(\varepsilon )\in 
\mathbb{N}
$ such that $n>K(\varepsilon )$ implies that the solution $z_{n}$ of 
\begin{eqnarray*}
z_{n}^{\prime \prime }(w)+\max \{z_{n}(w),\rho r_{\Delta _{n}}(\vartheta
_{\Delta _{n}}-w-\Delta _{n})\chi \lbrack -\Delta _{n},0](w)\} &=&0,w\in
\lbrack -\Psi (\rho ,\Delta _{\infty }),0] \\
z_{n}^{\prime \prime }(w) &=&0,w\leq -\Psi (\rho ,\Delta _{\infty }) \\
z_{n}(0) &=&0,z_{n}^{\prime }(0)=y_{\rho ,\Delta _{\infty }}^{\prime }(0)
\end{eqnarray*}%
satisfies $|z_{n}(w)-y_{\rho ,\Delta _{\infty }}(w)|+|z_{n}^{\prime
}(w)-y_{\rho ,\Delta _{\infty }}^{\prime }(w)|<\varepsilon ,w\in \lbrack -%
\frac{\pi }{2},0]$. Because $y_{\rho ,\Delta _{\infty }}^{\prime }(0)\leq -1$%
, for sufficiently small $\varepsilon $, we also have $z_{n}(w)\geq 0,w\in
\lbrack -\frac{\pi }{2},0]$. Applying Theorem {\ref{Theorem8.24}, }$\varphi
(w)\geq \frac{1}{1+\varepsilon }z_{n}(w),w\in \lbrack -\Psi (\rho
_{n},\Delta ),0]$. Now notice that $y_{\rho ,\Delta _{\infty }}$ is the
limit of $z_{n},$ as $\varepsilon \rightarrow 0$ and $K(\varepsilon
)<n\rightarrow \infty $.

Assume there exists a right discontinuity with respect to $\Delta $. In
other words, assume there exists an increasing sequence $\Delta _{n}>\Delta
_{n+1}>...>\Delta _{\infty }$ such that the limit function 
\begin{equation*}
\varphi (w):=\lim_{n\rightarrow \infty }y_{\rho ,\Delta _{n}}(w),w\in
\lbrack -\frac{\pi }{2},0]
\end{equation*}%
does not equal $y_{\rho ,\Delta _{\infty }}$. Notice that this function is
well defined, and $y_{\rho ,\Delta _{\infty }}(w)\leq \varphi (w),w\in
\lbrack -\frac{\pi }{2},0]$, in virtue of the nonincreasing nature of $\Psi $%
, and Theorem {\ref{Theorem8.24}.} For any fixed $\varepsilon >0$ there
exists a $K(\varepsilon )\in 
\mathbb{N}
$ such that $n>K(\varepsilon )$ implies that the solution $z_{n}$ of 
\begin{eqnarray*}
z_{n}^{\prime \prime }(w)+\max \{z_{n}(w),\rho r_{\Delta _{\infty
}}(\vartheta _{\Delta _{\infty }}-w-\Delta _{\infty })\chi \lbrack -\Delta
_{\infty },0](w)\} &=&0,w\in \lbrack -\Psi (\rho ,\Delta _{n}),0] \\
z_{n}^{\prime \prime }(w) &=&0,w\leq -\Psi (\rho ,\Delta _{n}) \\
z_{n}(0) &=&0,z_{n}^{\prime }(0)=y_{\rho ,\Delta _{n}}^{\prime }(0)
\end{eqnarray*}%
satisfies $|z_{n}(w)-y_{\rho ,\Delta _{n}}(w)|+|z_{n}^{\prime }(w)-y_{\rho
,\Delta _{n}}^{\prime }(w)|<\varepsilon ,w\in \lbrack -\frac{\pi }{2},0]$.
Because $y_{\rho ,\Delta _{n}}^{\prime }(0)\leq -1$, for sufficiently small $%
\varepsilon $, we also have $z_{n}(w)\geq 0,w\in \lbrack -\frac{\pi }{2},0]$%
. Applying Theorem {\ref{Theorem8.24}, }$y_{\rho ,\Delta _{\infty }}(w)\geq 
\frac{1}{1+\varepsilon }z_{n}(w),w\in \lbrack -\Psi (\rho ,\Delta _{\infty
}),0]$. Now notice that $\varphi $ is the limit of $z_{n},$ as $\varepsilon
\rightarrow 0$ and $K(\varepsilon )<n\rightarrow \infty $.

Finally, when $\Delta =0,$ we have $y_{\rho ,0}(w)=\cos (\frac{\pi }{2}%
+w),w\in \lbrack -\frac{\pi }{2},0]$ and when $\rho =1,\Delta \geq 2\sqrt{2}$%
, using \eqref {10.39} and $\vartheta _{\Delta }=\sqrt{2}$, we have by
direct calculation $b_{n}(w)=1-\frac{1}{2}(w+\sqrt{2})^{2},w\in \lbrack -%
\sqrt{2},0]$ for $n=1,2,...$
\end{proof}

\section{Bounds on semicycle length}

Combining the estimates of the previous sections, we can now prove the main
results, calculating lower bounds on the semicycle length of oscillatory
solutions which do not tend to zero.

\begin{theorem}
\label{Theorem10.41}Consider an oscillatory solution $x$\ of \eqref {2.41},
with \eqref {7.04}$.$ When $\tau _{m}>0,$ assume that its semicycles satisfy%
\begin{equation*}
x(t)\neq 0,t\in (a,b)\Longrightarrow b-a\leq \Psi (1,\tau _{m})+\vartheta
_{\tau _{m}}
\end{equation*}%
where $\Psi (\rho ,\Delta )$ is as in \eqref {7.11}, $\vartheta _{(\cdot )%
\text{ }}$as in Lemma {\ref{Lemma6.42},} and when $\tau _{m}=0$, assume that 
\begin{equation*}
x(t)\neq 0,t\in (a,b)\Longrightarrow b-a<\pi .
\end{equation*}%
Then, $x$ is bounded. If, further, for a real constant $c\in (0,\Psi (1,\tau
_{m})+\vartheta _{\tau _{m}}),$ 
\begin{equation}
x(t)\neq 0,t\in (a,b)\Longrightarrow \Psi (1,\tau _{m})+\vartheta _{\tau
_{m}}>c>b-a  \label{10.21}
\end{equation}%
then $x$ tends to zero at infinity.
\end{theorem}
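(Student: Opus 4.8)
The plan is to split every semicycle into its ascending part (from the left zero to the extremum) and its descending part (from the extremum to the right zero), bound each piece below using Sections~2--3, and then propagate the resulting inequality between the extrema of consecutive semicycles by a running-maximum argument. To set up notation, write $\tau_m$ for the essential supremum of $\tau$; since $x$ is oscillatory, list its semicycles (the maximal intervals of constant sign) as $(z_n,z_{n+1})$, $n\ge1$, with $z_n\to\infty$, put $M_n:=\sup_{[z_n,z_{n+1}]}|x|>0$, attained at some interior point $w_n$, and set $\rho_n:=\sup_{[z_n-\tau_m-\vartheta_{\tau_m},\,z_n]}|x|$. First I would record the per-semicycle bound: Corollary~\ref{Corollary5.41} (applied with $\Delta=\tau_m$ and this $\rho_n$) gives $w_n-z_n\ge\Psi(\rho_n/M_n,\tau_m)$ for the ascent, and Remark~\ref{Remark1} (equivalently Corollary~\ref{Corollary5.40} used backwards from $z_{n+1}$) gives $z_{n+1}-w_n\ge\vartheta_{\tau_m}$ for the descent, so that $z_{n+1}-z_n\ge\Psi(\rho_n/M_n,\tau_m)+\vartheta_{\tau_m}$.

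For boundedness, one inserts the hypothesis $z_{n+1}-z_n\le\Psi(1,\tau_m)+\vartheta_{\tau_m}$ into this lower bound to obtain $\Psi(\rho_n/M_n,\tau_m)\le\Psi(1,\tau_m)$, hence $\rho_n/M_n\ge1$ by the monotonicity of $\Psi(\cdot,\tau_m)$ (Lemma~\ref{Lemma2.47}); that is, $M_n\le\rho_n$. Introduce $\mu(t):=\sup_{[s-\tau_m,t]}|x|$, which is nondecreasing, finite by continuity on compacta, and satisfies $\mu(z_{n+1})=\max\{\mu(z_n),M_n\}$. For $n$ large $[z_n-\tau_m-\vartheta_{\tau_m},z_n]\subset[s-\tau_m,z_n]$, so $\rho_n\le\mu(z_n)$, whence $M_n\le\mu(z_n)$ and $\mu(z_{n+1})=\mu(z_n)$; thus $\mu$ is eventually constant along $(z_n)$, hence bounded, and so is $x$. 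When $\tau_m=0$ the hypothesis is vacuous, since $\Psi(\cdot,0)\equiv\tfrac\pi2$ and $\vartheta_0=\tfrac\pi2$ force every semicycle to have length at least $\pi$, contradicting $b-a<\pi$.

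For the decay statement, note that \eqref{10.21} implies the boundedness hypothesis, so $x$ is bounded by the first part and $\ell:=\limsup_nM_n<\infty$. If $c\le\vartheta_{\tau_m}$ the claim is again vacuous, since every semicycle has length at least $\vartheta_{\tau_m}$; otherwise $c-\vartheta_{\tau_m}$ lies in the range of the continuous, strictly decreasing function $\Psi(\cdot,\tau_m)$ (Lemma~\ref{Lemma2.47}), so there is a unique $\lambda>1$ with $\Psi(\lambda,\tau_m)=c-\vartheta_{\tau_m}$. Then $z_{n+1}-z_n<c$ and the per-semicycle bound give $\Psi(\rho_n/M_n,\tau_m)<\Psi(\lambda,\tau_m)$, hence $\rho_n/M_n>\lambda$, i.e. $M_n<\rho_n/\lambda$ for $n$ large. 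Given $\varepsilon>0$, fix $N$ with $M_k<\ell+\varepsilon$ for $k\ge N$; since $z_n\to\infty$, for $n$ large the fixed-length window $[z_n-\tau_m-\vartheta_{\tau_m},z_n]$ meets only semicycles of index exceeding $N$, so $\rho_n\le\ell+\varepsilon$ and $M_n<(\ell+\varepsilon)/\lambda$. Taking $\limsup$ in $n$ and then $\varepsilon\to0$ yields $\ell\le\ell/\lambda$, so $\ell=0$; thus $M_n\to0$, and since $z_n\to\infty$ we conclude $x(t)\to0$ as $t\to\infty$.

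I expect the crux to be the propagation step rather than the per-semicycle estimate: because $\rho_n$ is a supremum over a window of \emph{fixed} length $\tau_m+\vartheta_{\tau_m}$, not over the single preceding semicycle, the inequality $M_n\le\rho_n$ (resp.\ $M_n<\rho_n/\lambda$) does not telescope directly --- the running maximum $\mu$ takes care of boundedness, while for decay one must exploit that an interval of bounded length eventually overlaps only semicycles of arbitrarily large index. A secondary technicality, should one not simply quote Remark~\ref{Remark1}, is that the descent bound $z_{n+1}-w_n\ge\vartheta_{\tau_m}$ tacitly presumes the semicycle maximum dominates the preceding delay window; if instead the descent is shorter than $\vartheta_{\tau_m}$, a backward application of Corollary~\ref{Corollary5.40} from $z_{n+1}$ bounds $M_n$ by a windowed maximum times $r_{\tau_m}\bigl(\vartheta_{\tau_m}-(z_{n+1}-w_n)\bigr)$, and combining this with the ascent estimate still produces a uniform contraction factor strictly below $1$ --- reconciling these two regimes will be the fiddliest computation. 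Finally, checking that the two degenerate cases $\tau_m=0$ and $c\le\vartheta_{\tau_m}$ render the statement vacuous is routine.
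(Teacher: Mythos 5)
Your overall strategy --- lower-bound ascent via Corollary~\ref{Corollary5.41}, lower-bound descent via Corollary~\ref{Corollary5.40}, combine with the semicycle hypothesis, and propagate a windowed maximum --- is the same as the paper's, and for the boundedness part it can be made to go through exactly as you outline, provided you run it contrapositively: the descent estimate $z_{n+1}-w_n\ge\vartheta_{\tau_m}$ is \emph{not} unconditional (it needs $\max_{[w_n-\tau_m,w_n]}|x|\le M_n$), but if one assumes $M_n>\rho_n$ for contradiction then that preceding-window condition is automatic, the full per-semicycle bound $z_{n+1}-z_n\ge\Psi(\rho_n/M_n,\tau_m)+\vartheta_{\tau_m}>\Psi(1,\tau_m)+\vartheta_{\tau_m}$ holds, and the hypothesis is violated. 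The paper packages this same idea differently, fixing a zero $\xi$ and taking the infimum of the points where $|x|$ exceeds $\max_{[\xi-\tau_m-\vartheta_{\tau_m},\xi]}|x|$, so that the crossing semicycle $(a,b)$ automatically has its maximum dominating the preceding window; your running-maximum $\mu$ plays the same bookkeeping role.

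The genuine gap is in the decay argument, and it is exactly the ``secondary technicality'' you flag at the end but do not resolve. To obtain $M_n<\rho_n/\lambda$ you are in the regime $M_n\le\rho_n$, i.e.\ the preceding window may dominate the semicycle, and then the clean descent bound $z_{n+1}-w_n\ge\vartheta_{\tau_m}$ fails: Corollary~\ref{Corollary5.40} only gives
\begin{equation*}
M_n\le\Bigl(\max_{[\,z_{n+1}-\vartheta_{\tau_m}-\tau_m,\,z_{n+1}-\vartheta_{\tau_m}\,]}|x|\Bigr)\, r_{\tau_m}\bigl(\vartheta_{\tau_m}-(z_{n+1}-w_n)\bigr)\le\rho_n\, r_{\tau_m}\bigl(\vartheta_{\tau_m}-(z_{n+1}-w_n)\bigr),
\end{equation*}
which degenerates as $z_{n+1}-w_n\to\vartheta_{\tau_m}^-$. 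Your per-semicycle bound as written, and hence the step ``$\Psi(\rho_n/M_n,\tau_m)<\Psi(\lambda,\tau_m)$'', does not follow in this regime. The paper resolves precisely this by introducing an $\varepsilon$-slack: choosing $\varepsilon>0$ and $\rho\in(r_{\tau_m}(\varepsilon),1)$ with $\Psi(1/\rho,\tau_m)+\vartheta_{\tau_m}-\varepsilon>c$, one still gets a descent lower bound $z_{n+1}-w_n\ge\vartheta_{\tau_m}-\varepsilon$ at the crossing semicycle, and combining with the ascent bound reproduces a contradiction with $z_{n+1}-z_n<c$. In your framework one must split into the two subcases (descent $\ge\vartheta_{\tau_m}$ vs.\ descent $<\vartheta_{\tau_m}$), use the ascent bound to lower-bound $\vartheta_{\tau_m}-(z_{n+1}-w_n)$ by a fixed $\eta>0$ in the second subcase, and take the contraction factor $\sigma:=\max\{1/\lambda,\,r_{\tau_m}(\eta)\}<1$; this works, but it is an essential part of the argument, not a side remark. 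A secondary, easily fixed, issue: Lemma~\ref{Lemma2.47} does not assert that $\Psi(\cdot,\tau_m)$ is onto $(0,\Psi(1,\tau_m))$, so ``there is a unique $\lambda>1$ with $\Psi(\lambda,\tau_m)=c-\vartheta_{\tau_m}$'' is not justified; one should instead take $\lambda>1$ close to $1$ with $\Psi(\lambda,\tau_m)>c-\vartheta_{\tau_m}$, which continuity does provide.
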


\begin{proof}
Let us consider a point $\xi $ such that $x(\xi )=0$. We shall show that 
\begin{equation}
|x(u)|\leq \max_{t\in \lbrack \xi -\tau _{m}-\vartheta _{\tau _{m}},\xi
]}|x(t)|,u\geq \xi ,  \label{9.46}
\end{equation}%
where $\vartheta _{(\cdot )\text{ }}$ is as in Lemma {\ref{Lemma6.42}. }We
may assume $\max_{t\in \lbrack \xi -\tau _{m}-\vartheta _{\tau _{m}},\xi
]}|x(t)|>0$ without loss of generality. Assuming the contrary, that %
\eqref{9.46} does not hold, by considering the point 
\begin{equation*}
\psi :=\inf \{u>\xi :|x(u)|>\max_{t\in \lbrack \xi -\tau _{m}-\vartheta
_{\tau _{m}},\xi ]}|x(t)|\},
\end{equation*}%
it is easy to see that this point is in a semicycle $(a,b)$ such that 
\begin{equation}
\max_{t\in (a,b)}|x(t)|>\max_{t\in \lbrack \xi -\tau _{m}-\vartheta _{\tau
_{m}},a]}|x(t)|=\max_{t\in \lbrack \xi -\tau _{m}-\vartheta _{\tau _{m}},\xi
]}|x(t)|,  \label{5.44}
\end{equation}%
where $a\geq \xi $. Then in $\left( a,b\right) $, the extremum is first
assumed at a point $w$ such that $x^{\prime }(w)=0$. By Corollaries {\ref%
{Corollary5.40}, \ref{Corollary5.41}, and \eqref {5.44}, w}e have 
\begin{eqnarray*}
w &\leq &b-\vartheta _{\tau _{m}} \\
w &\geq &a+\Psi \left( \frac{\max_{t\in \lbrack \xi -\tau _{m}-\vartheta
_{\tau _{m}},\xi ]}|x(t)|}{\max_{t\in (a,b)}|x(t)|},\tau _{m}\right) ,
\end{eqnarray*}%
where $\Psi (\rho ,\Delta )$ is defined in\eqref {7.11}.

Under \eqref {10.21}, considering Lemma {\ref{Lemma2.47}, there exist }$%
\varepsilon >0$ and $\rho \in (r_{\Delta }(\varepsilon ),1)$ such that 
\begin{equation*}
\Psi (\frac{1}{\rho },\tau _{m})+\vartheta _{\tau _{m}}-\varepsilon >(b-a).
\end{equation*}%
We will similarly show that $|x(u)|\leq \rho \max_{t\in \lbrack \xi -\tau
_{m}-\vartheta _{\tau _{m}},\xi ]}|x(t)|,u\geq \xi $. Assuming the contrary,
by considering the point 
\begin{equation*}
\psi :=\inf \{u>\xi :|x(u)|>\rho \max_{t\in \lbrack \xi -\tau _{m}-\vartheta
_{\tau _{m}},\xi ]}|x(t)|\},
\end{equation*}%
it is easy to see that this point is in a semicycle $(a,b)$ such that 
\begin{equation}
\max_{t\in (a,b)}|x(t)|>\rho \max_{t\in \lbrack \xi -\tau _{m}-\vartheta
_{\tau _{m}},a]}|x(t)|=\rho \max_{t\in \lbrack \xi -\tau _{m}-\vartheta
_{\tau _{m}},\xi ]}|x(t)|,  \label{11.49}
\end{equation}%
where $a\geq \xi $. Then in $\left( a,b\right) $, the extremum is first
assumed at a point $w$ such that $x^{\prime }(w)=0$. By Corollaries {\ref%
{Corollary5.40}, \ref{Corollary5.41}, and \eqref {11.49}, w}e have 
\begin{eqnarray*}
w &\leq &b-\left( \vartheta _{\tau _{m}}-\varepsilon \right) \\
w &\geq &a+\Psi \left( \frac{\max_{t\in \lbrack \xi -\tau _{m}-\vartheta
_{\tau _{m}},\xi ]}|x(t)|}{\max_{t\in (a,b)}|x(t)|},\tau _{m}\right) ,
\end{eqnarray*}%
a contradiction. The required result follows by considering any sequence of
zeros $\zeta _{n}$ with $\zeta _{1}:=\xi $ and $\zeta _{n+1}>\zeta _{n}+\tau
_{m}+\vartheta _{\tau _{m}}$. By induction, one obtains 
\begin{equation*}
|x(u)|\leq \rho ^{n}\max_{t\in \lbrack \xi -\tau _{m}-\vartheta _{\tau
_{m}},\xi ]}|x(t)|,u\geq \zeta _{n}.
\end{equation*}
\end{proof}

\section{Classification of solutions for\ negative coefficient}

We now investigate the classification of solutions of \eqref {2.41} with
nonpositive coefficient $p(t)\leq 0,t\in 
\mathbb{R}
$. The following was proven by Kamenskii {\cite{kamen} }for continuous
parameters and the same proof holds in the measurable case.

\begin{lemma}[\protect\cite{kamen}]
Let $x$ be a positive solution of \eqref {2.41} with nonpositive coefficient 
$p(t)\leq 0,t\in 
\mathbb{R}
$, and assume 
\begin{equation*}
\int_{0}^{\infty }|p(t)|dt=\infty .
\end{equation*}%
Then $x(t)$ has one of the following two asymptotic behaviors:
\end{lemma}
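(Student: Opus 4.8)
The statement to be proven is a dichotomy for positive solutions of $x''(t)+p(t)x(t-\tau(t))=0$ with $p(t)\le 0$ and $\int_0^\infty|p(t)|\,dt=\infty$. First I would observe that with $p(t)\le 0$, the equation reads $x''(t)=|p(t)|x(t-\tau(t))$, so if $x$ is positive (eventually, say on $[t_0-\tau_m,\infty)$), then $x''(t)\ge 0$ eventually; hence $x'$ is nondecreasing on $[t_0,\infty)$. The plan is to split according to the eventual sign of $x'$. Since $x'$ is monotone nondecreasing, either $x'(t)\ge 0$ for all large $t$, or $x'(t)<0$ for all large $t$ (it cannot change sign more than once, from $-$ to $+$). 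These two cases will produce the two asymptotic behaviors.

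**Case $x'$ eventually nonnegative.** Here $x$ is eventually nondecreasing and positive, so $\lim_{t\to\infty}x(t)=:L\in(0,+\infty]$ exists. I would rule out $L<\infty$: if $x$ were bounded above by $L<\infty$, then for large $t$ we have $x(t-\tau(t))\ge x(t_1)=:m>0$ for some fixed $t_1$ (using that $x$ is nondecreasing and $t-\tau(t)\to\infty$), whence $x''(t)=|p(t)|x(t-\tau(t))\ge m|p(t)|$. Integrating twice and using $\int_0^\infty|p(t)|\,dt=\infty$ forces $x'(t)\to\infty$ and then $x(t)\to\infty$, contradicting boundedness. So in this case $x(t)\to+\infty$; this is the first asymptotic behavior (the unbounded, monotone-increasing one).

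**Case $x'$ eventually negative.** Here $x$ is eventually decreasing and positive, so $\lim_{t\to\infty}x(t)=:\ell\in[0,\infty)$ exists. I would show $\ell=0$. Suppose $\ell>0$; then $x(t-\tau(t))\ge \ell$ for all large $t$, so $x''(t)\ge \ell|p(t)|$, and integrating from a large $t_2$ to $t$ gives $x'(t)\ge x'(t_2)+\ell\int_{t_2}^t|p(s)|\,ds\to+\infty$, contradicting $x'<0$. Hence $\ell=0$, i.e. $x(t)\to 0$ monotonically; this is the second asymptotic behavior. (One also gets, as a refinement, that $x'(t)\to 0^-$ as well, since $x'$ is negative and nondecreasing hence convergent, and a nonzero limit would again make $x$ unbounded below.)

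**Main obstacle.** The reasoning above is essentially elementary once one exploits the convexity $x''\ge 0$; the only delicate point is handling the delayed argument $t-\tau(t)$ — one must know $t-\tau(t)\to\infty$, which follows from $\tau_m<\infty$ (uniform essential boundedness of $\tau$ is assumed from the start), so that $x(t-\tau(t))$ is eventually controlled by values of $x$ on a shifted but cofinal interval. The statement as printed in the excerpt is cut off before listing the two behaviors, but the natural reading — and the one the subsequent sections build on — is precisely this dichotomy: either $x$ increases to $+\infty$, or $x$ decreases to $0$; in both cases $x$ is eventually monotone. So the core of the proof is the observation that positivity plus $p\le 0$ forces eventual convexity, which in turn forces eventual monotonicity of $x$, and then the divergence of $\int|p|$ upgrades "bounded below by a positive constant at the delayed argument" into a contradiction in the two borderline subcases.
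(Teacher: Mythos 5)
The paper does not supply a proof of this Lemma; it simply cites Kamenskii \cite{kamen}, remarking that the original argument for continuous parameters carries over to the measurable setting. So there is no in-paper proof to compare against, and the review is of the proposal on its own merits.

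Your argument is correct and follows what is almost certainly the classical route: since $p\le 0$, positivity of $x$ forces $x''=|p|\,x(\cdot-\tau(\cdot))\ge 0$ eventually, so $x'$ is nondecreasing and hence eventually of one sign; the divergence of $\int|p|$ then rules out the two borderline subcases ($x\nearrow L<\infty$ and $x\searrow \ell>0$) by a single integration of $x''\ge c\,|p|$. This is exactly the kind of convexity-plus-divergence argument one expects for this Kamenskii dichotomy, and your handling of the delayed argument via $t-\tau(t)\to\infty$ (which uses $\tau_m<\infty$) is the right precaution.

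One small gap to close: in the unbounded case you show $x(t)\to+\infty$ by ruling out a finite limit, but the statement being proved also asserts $\lim_{t\to\infty}x'(t)=+\infty$, and your write-up does not explicitly deliver this. It follows by the same mechanism once $x(t)\to\infty$: for any $c>0$ one has $x(t-\tau(t))\ge c$ for large $t$, hence $x''\ge c\,|p|$, so $x'(t)\ge x'(T)+c\int_T^t|p(s)|\,ds\to\infty$; since $c$ is arbitrary this gives $x'\to+\infty$. Similarly in the decaying case you correctly note $x'\to 0^-$; it is worth stating explicitly that $x'$ is negative and nondecreasing, hence converges, and a negative limit would force $x\to-\infty$, contradicting positivity. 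With these two sentences added, the proof is complete.
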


$i$)%
\begin{equation*}
\lim_{t\rightarrow \infty }x(t)=\lim_{t\rightarrow \infty }x^{\prime
}(t)=+\infty
\end{equation*}%
$ii$) 
\begin{equation*}
\lim_{t\rightarrow \infty }x(t)=0,\text{ and }x^{\prime }(t)\leq
0,\lim_{t\rightarrow \infty }x^{\prime }(t)=0.
\end{equation*}

The existence and uniqueness of such behaviors, with given initial
conditions, was investigated in {\cite{ladde, shmul, skub}. }Concerning the
possibility of decreasing positive solutions, we have the following results,
which we present within the framework of Azbelev \cite{Azb1971}, allowing
for zero initial function and discontinuity of the solution or its
derivative at the initial point. We recall the definition of the Wronskian
of the fundamental system 
\begin{equation*}
W(t):=\left\vert 
\begin{array}{cc}
z(t) & y(t) \\ 
z^{\prime }(t) & y^{\prime }(t)%
\end{array}%
\right\vert ,t\geq 0,
\end{equation*}%
where $z,y$ are the solutions of \eqref {2.41} on $[0,+\infty )$, satisfying 
$z(t)=y(t)=0,t<0$ and $z(0)=1,z^{\prime }(0)=0,$ and $y^{\prime
}(0)=1,y(0)=0.$

\begin{lemma}[{\protect\cite{gustafson}, {\protect\cite[Theorem 4.3.1]{ladde}}%
}]
Assume that $p(t)\leq 0,t\in 
\mathbb{R}
$,\ the function $t\mapsto (t-\tau (t))$ is nondecreasing, and that%
\begin{equation}
\lim \sup_{t\rightarrow \infty }\int_{t-\tau (t)}^{t}(s-t+\tau
(t))|p(s)|ds>1.  \label{3.07}
\end{equation}%
Then all nonoscillatory solutions of \eqref {2.41} are unbounded.
\end{lemma}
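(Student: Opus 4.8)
The plan is to argue by contradiction: suppose $x$ is a nonoscillatory solution of \eqref{2.41} which is bounded. Since $p(t)\le 0$ and $x$ is eventually of one sign — say eventually positive, the case of eventually negative being handled by passing to $-x$ — equation \eqref{2.41} gives $x^{\prime\prime}(t)=-p(t)x(t-\tau(t))=|p(t)|x(t-\tau(t))\ge 0$ for all large $t$, so $x$ is eventually convex and $x^\prime$ is eventually nondecreasing. If $x^\prime$ were eventually positive then $x$ would be eventually increasing and convex, hence either tends to a finite limit (forcing $x^\prime\to 0$, contradicting monotone increase of $x^\prime$ toward a positive limit inf unless $x^\prime\to 0$, which is the borderline case to rule out below) or tends to $+\infty$, contradicting boundedness. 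So the remaining scenario to exclude is that $x^\prime(t)\le 0$ eventually, $x(t)\searrow \ell\ge 0$, and $x^\prime(t)\nearrow 0$. The heart of the matter is to show that hypothesis \eqref{3.07} is incompatible with the existence of such a bounded, eventually positive, eventually nonincreasing solution.

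The key step is a Vallée-Poussin / integral-inequality estimate in the spirit of Lemma \ref{Lemma3.39}. On an interval $[t-\tau(t),t]$, write the decreasing positive solution as $x(s)=x(t)+\int_s^t(-x^\prime(u))\,du$, and integrate the identity $x^{\prime\prime}(u)=|p(u)|x(u-\tau(u))$ twice. Using that $t\mapsto t-\tau(t)$ is nondecreasing, so that $u-\tau(u)\le t-\tau(t)$ whenever $u\le t$, and that $x$ is nonincreasing, one gets $x(u-\tau(u))\ge x(t-\tau(t))\ge x(t)$ on the relevant range. Plugging this lower bound into the double integral of $x^{\prime\prime}$ over $[t-\tau(t),t]$ and comparing with the fact that $x$ stays between $0$ and its value at $t-\tau(t)$ produces, after the standard Fubini rearrangement, the inequality
\begin{equation*}
x(t-\tau(t))\ \ge\ x(t-\tau(t))\int_{t-\tau(t)}^{t}\bigl(s-t+\tau(t)\bigr)|p(s)|\,ds,
\end{equation*}
or a close variant thereof; dividing by the positive quantity $x(t-\tau(t))$ forces
\begin{equation*}
\int_{t-\tau(t)}^{t}\bigl(s-t+\tau(t)\bigr)|p(s)|\,ds\ \le\ 1
\end{equation*}
for all large $t$, hence $\limsup_{t\to\infty}\int_{t-\tau(t)}^{t}(s-t+\tau(t))|p(s)|\,ds\le 1$, directly contradicting \eqref{3.07}. (Where the monotone decrease only provides $x(u-\tau(u))\ge x(t-\tau(t))$, one must also verify that the "leftover" term coming from $x^\prime(t-\tau(t))\le 0$ has the favorable sign, which it does because a nonnegative decreasing convex function has $-x^\prime\ge 0$; this is exactly the role played by the sign condition $u^\prime(a)\ge 0$ in Lemma \ref{Lemma3.39}.)

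I expect the main obstacle to be the bookkeeping at the two borderline cases rather than the core estimate: first, ruling out the marginal situation $x^\prime(t)\nearrow 0$ with $x(t)\searrow \ell>0$, where one must check that the double-integration argument still delivers the strict contradiction with \eqref{3.07} — here one uses that $\ell>0$ makes $x(t-\tau(t))\ge \ell$ bounded below, so the division step is legitimate and uniform; and second, the case $\ell=0$, i.e. $x(t)\to 0$, where the quotient $x(t-\tau(t))/x(t-\tau(t))$ is still identically $1$ so nothing degenerates — the subtlety is only that one must take $t$ along a sequence realizing the $\limsup$ in \eqref{3.07} and ensure $t-\tau(t)\to\infty$ so that $x$ is already positive and nonincreasing on the whole window $[t-\tau(t),t]$, which follows since $\tau$ is essentially bounded. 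Once these two endpoints are dispatched, the contradiction with \eqref{3.07} is immediate and the proof closes: every nonoscillatory solution must be unbounded.
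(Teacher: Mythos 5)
Your proof is correct, and — since the paper cites this result from Gustafson and from Ladde--Lakshmikantham--Zhang without supplying its own argument — you have essentially reconstructed the classical proof from those references. The core chain is right: for an eventually positive, eventually nonincreasing solution, Taylor's formula with integral remainder (expanding $x(t-\tau(t))$ about $t$) gives
\begin{equation*}
x(t-\tau(t)) = x(t) - \tau(t)\,x^{\prime}(t) + \int_{t-\tau(t)}^{t}\bigl(v-t+\tau(t)\bigr)\,|p(v)|\,x\bigl(v-\tau(v)\bigr)\,dv,
\end{equation*}
and since $x(t)\ge 0$, $x^{\prime}(t)\le 0$, the first two terms on the right are nonnegative, while monotonicity of $t\mapsto t-\tau(t)$ together with $x$ nonincreasing bounds the integrand below by $x(t-\tau(t))\bigl(v-t+\tau(t)\bigr)|p(v)|$; dividing by the positive $x(t-\tau(t))$ forces $\int_{t-\tau(t)}^{t}(s-t+\tau(t))|p(s)|\,ds\le 1$ eventually, against \eqref{3.07}. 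Two small expository slips worth fixing: the ``leftover'' derivative term that must have the favorable sign is $-\tau(t)x^{\prime}(t)$ (derivative at the \emph{right} endpoint $t$, not at $t-\tau(t)$ as you wrote; with the expansion about $t-\tau(t)$ the remainder weight becomes $(t-s)$ and the sign goes the wrong way, so the expansion about $t$ is the one you actually need); and your discussion of the case $x^{\prime}>0$ eventually is unnecessarily hedged — convexity plus a single point with $x^{\prime}(t_{0})>0$ already yields $x(t)\ge x(t_{0})+x^{\prime}(t_{0})(t-t_{0})\to\infty$, so ``tends to a finite limit'' never happens there and the ``borderline'' you allude to does not arise in that branch.
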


The following result was first proven in {\cite[p. 74]{labophd}.} For
extensions to higher-order equations, we refer the reader to {\cite{dom14,
labo, labophd, labo2}.}

\begin{lemma}[{{\ \protect\cite[Theorem 17.14]{agar}, \protect\cite[Theorem
10.2]{BDK}, \protect\cite[p. 74]{labophd}}}]
\label{Lemma12}Assume that $p(t)\leq 0,t\in 
\mathbb{R}
$. The existence of positive nonincreasing solutions of the inequality 
\begin{equation}
x^{\prime \prime }(t)+p(t)x(t-\tau (t))\geq 0,t\in \lbrack 0,+\infty ),
\label{5.30}
\end{equation}%
is equivalent to existence of positive nonincreasing solutions of the
equation%
\begin{equation}
x^{\prime \prime }(t)+p(t)x(t-\tau (t))=0,t\in \lbrack 0,+\infty ),
\label{5.31}
\end{equation}%
and to the nonvanishing of the Wronskian of the fundamental system on $%
[0,+\infty )$.
\end{lemma}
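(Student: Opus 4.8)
The statement to prove (Lemma \ref{Lemma12}) asserts a triple equivalence: (i) existence of a positive nonincreasing solution of the differential inequality \eqref{5.30}; (ii) existence of a positive nonincreasing solution of the differential equation \eqref{5.31}; (iii) nonvanishing of the Wronskian $W(t)$ on $[0,+\infty)$. The plan is to prove the cyclic chain of implications $(ii)\Rightarrow(i)\Rightarrow(iii)\Rightarrow(ii)$. The implication $(ii)\Rightarrow(i)$ is trivial, since any solution of the equation is in particular a solution of the inequality (equality is a special case of $\geq$). So the substance lies in $(i)\Rightarrow(iii)$ and $(iii)\Rightarrow(ii)$.

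For $(i)\Rightarrow(iii)$, suppose $v$ is a positive nonincreasing solution of \eqref{5.30}; normalize so $v$ is bounded below by a positive constant on any compact set. The idea is a Sturm-type / Vallée-Poussin comparison: I want to show that if $W$ vanished somewhere, then some solution built from the fundamental system $z,y$ would have two zeros (or a zero with a sign constraint on the derivative) on an interval where $v>0$ solves the reversed inequality, contradicting a separation result in the spirit of Lemma \ref{Lemma3.39}. Concretely, $W(t_0)=0$ means $z$ and $y$ are linearly dependent at $t_0$, so there is a nontrivial combination $u=\alpha z+\beta y$ with $u(t_0)=0$; since $u(0)$ and $u'(0)$ can be prescribed and $u$ has a zero at $0$ built in from the zero initial data only if... — more carefully, I would use that $y(0)=0$, so $y$ itself has a zero at $0$, and $W(t_0)=0$ forces $y$ to have another zero at $t_0$ (if $z(t_0)\neq 0$) by writing $y(t_0)z'(t_0)=y'(t_0)z(t_0)$ together with an argument that $z>0$ on $[0,t_0]$ (which follows because $p\leq 0$ makes $z''=-pz(\cdot-\tau)\geq 0$ as long as $z\geq 0$, so $z$ is convex and starts at $z(0)=1$, $z'(0)=0$, hence stays $\geq 1$). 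Thus $y$ vanishes at $0$ and at $t_0$ with $y'(0)=1\geq 0$, while $y''(t)+p(t)y(t-\tau(t))=0\geq 0$ on $[0,t_0]$ — and $v>0$ solves $v''+p(t)v(t-\tau(t))\geq 0$ with $v(0)>0$. This is exactly the configuration forbidden by the Vallée–Poussin/Sturm separation machinery of Lemma \ref{Lemma3.39} (after accounting for the characteristic-function truncation of the delayed argument, using $p\leq0$ so that $\chi$ only helps). Hence $W$ cannot vanish.

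For $(iii)\Rightarrow(ii)$, assume $W(t)\neq 0$ for all $t\geq 0$; since $W(0)=1>0$ and $W$ is continuous, $W>0$ on $[0,+\infty)$. The goal is to construct a positive nonincreasing solution of \eqref{5.31}. The standard device: on each interval $[0,T]$ consider the solution $x_T$ of \eqref{5.31} with $x_T(T)=0$ (or with a boundary condition forcing a zero at $T$), normalized say by $x_T(0)=1$; the nonvanishing Wronskian guarantees this boundary value problem is uniquely solvable and that $x_T>0$ on $[0,T)$. Because $p\leq 0$, $x_T''=-p\,x_T(\cdot-\tau)\geq 0$ on the region where $x_T\geq 0$, so $x_T$ is convex there; convexity plus $x_T(0)=1$, $x_T(T)=0$, $x_T>0$ on $[0,T)$ forces $x_T'\leq 0$ on $[0,T]$, i.e. $x_T$ is nonincreasing. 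Then let $T\to\infty$: using continuous dependence \cite[Theorem 2.2]{Hale} and a diagonal/compactness argument (the $x_T$ and $x_T'$ are uniformly bounded on compacta by monotonicity and convexity, $0\leq x_T\leq 1$, $x_T'\leq 0$), extract a limit $x_\infty$ solving \eqref{5.31} on $[0,+\infty)$, nonincreasing, with $x_\infty(0)=1$. The remaining point is that $x_\infty$ is strictly positive, i.e. the limit does not degenerate to $0$; this is where the nonvanishing of $W$ must be used again — e.g. via a lower bound on $x_T$ on a fixed interval that is uniform in $T$, obtained by expressing $x_T$ through the fundamental system $z,y$ and the boundary condition and noting $W$ bounded away from $0$ on compacta.

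**Main obstacle.** The delicate step is $(iii)\Rightarrow(ii)$, specifically ruling out degeneration of the limit solution to the trivial one: one must extract a uniform-in-$T$ positive lower bound for $x_T$ on a fixed compact interval from the hypothesis $W>0$, and this is the only place where the Wronskian hypothesis does real work beyond guaranteeing solvability of the finite-interval problems. Handling the measurable (non-continuous) parameters and the truncated delayed argument inside Lemma \ref{Lemma3.39} in the $(i)\Rightarrow(iii)$ step is a secondary technical nuisance, but the sign condition $p\leq 0$ makes the characteristic-function terms cooperative rather than obstructive.
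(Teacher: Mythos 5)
The paper does not prove Lemma~\ref{Lemma12}: it is stated as a cited result from \cite{agar}, \cite{BDK}, \cite{labophd}, and the authors remark only that it ``holds in the general case of not necessarily bounded delay and integrable coefficient.'' So there is no in-paper proof to compare against, and your attempt must stand on its own. It does not, for two concrete reasons.

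First, in the $(i)\Rightarrow(iii)$ step the linear algebra is wrong. The relation $W(t_0)=0$, i.e.\ $y(t_0)z'(t_0)=y'(t_0)z(t_0)$ with $z(t_0)\neq0$, does \emph{not} force $y(t_0)=0$; it only says the vectors $(z(t_0),z'(t_0))$ and $(y(t_0),y'(t_0))$ are proportional. What one actually gets is a nontrivial combination $u=\alpha z+\beta y$ with $u(t_0)=u'(t_0)=0$ (a double zero at $t_0$), and $u(0)=\alpha$, $u'(0)=\beta$ can be anything with $(\alpha,\beta)\neq(0,0)$; in particular $u$ need not vanish at $0$. Your argument silently replaces this $u$ with $y$, which is a genuine error, not a simplification.

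Second, Lemma~\ref{Lemma3.39} is stated, and proved via Vall\'ee--Poussin/Sturm separation, precisely for the case $p(t)\geq0$; the test function $v$ there satisfies $v''+\chi[a,b](\cdot-\tau)p\,v(\cdot-\tau)\leq0$ with $p\geq0$. In Lemma~\ref{Lemma12} you have $p\leq0$ and a candidate $v$ satisfying the \emph{reverse} inequality $v''+p\,v(\cdot-\tau)\geq0$, which (since $p\leq0$) means $v''\geq0$, i.e.\ $v$ is convex; writing $q=-p\geq0$ this reads $v''-q\,v(\cdot-\tau)\geq0$, which is not of the form $v''+q\,v(\cdot-\tau)\leq0$ required by Lemma~\ref{Lemma3.39}. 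The remark that ``$p\leq0$ makes the characteristic-function terms cooperative'' does not repair this sign mismatch; you would need the disconjugacy/nonoscillation machinery for the $p\leq0$ case (which is what \cite{labophd}, \cite[Ch.~10]{BDK}, \cite[Thm.~17.14]{agar} actually develop, via sign constancy of the Cauchy/Green function), not the oscillatory Sturm comparison of Lemma~\ref{Lemma3.39}. Finally, you correctly flag the uniform lower bound for $x_T$ in $(iii)\Rightarrow(ii)$ as the remaining gap, but you do not close it; that step, together with the two issues above, means the proposal as written is incomplete.
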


\begin{remark}
Lemma {\ref{Lemma12}} {holds in the general case of not necessarily bounded
delay and integrable coefficient.}
\end{remark}

\begin{lemma}[{{\ \protect\cite[Theorem 17.14]{agar}, \protect\cite[Theorem
10.2]{BDK}, \protect\cite{labo}, \protect\cite[p. 74]{labophd}}}]
Assume that 
\begin{equation*}
\tau _{m}\sqrt{\limfunc{esssup}_{t\in 
\mathbb{R}
}|p(t)|}\leq \frac{2}{e}.
\end{equation*}%
Then the Wronskian of the fundamental system of \eqref {2.41} has no zeros
on $[0,+\infty )$.
\end{lemma}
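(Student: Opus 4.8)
The plan is to reduce the statement to an application of Lemma~\ref{Lemma12}, by exhibiting a positive nonincreasing solution of the inequality \eqref{5.30} (equivalently, of the equation \eqref{5.31}) under the hypothesis $\tau_m\sqrt{\operatorname{esssup}|p|}\leq 2/e$. First I would invoke the rescaling Lemma~\ref{Lemma3.32}: choosing $k$ appropriately, we may assume $\operatorname{esssup}|p(t)|\leq 1$ (this is condition \eqref{7.04}), and then the hypothesis becomes simply $\tau_m\leq 2/e$, while the rescaling preserves both the sign of $p$ and the existence/nonexistence of positive nonincreasing solutions, hence the vanishing/nonvanishing of the Wronskian (the Wronskian of the rescaled system is a positive multiple of the original one).

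Next I would construct an explicit positive nonincreasing supersolution. With $p(t)\le 0$ set $q(t):=|p(t)|=-p(t)\ge 0$, so \eqref{5.30} reads $x''(t)\ge q(t)\,x(t-\tau(t))$; I want a positive nonincreasing $x$ with $x''(t)\ge q(t)x(t-\tau(t))$ on $[0,\infty)$. The natural candidate is the exponential $x(t)=e^{-\lambda t}$ for a suitable $\lambda>0$. Plugging in, $x''(t)=\lambda^2 e^{-\lambda t}$ and $q(t)x(t-\tau(t))=q(t)e^{-\lambda(t-\tau(t))}\le e^{\lambda\tau_m}e^{-\lambda t}$ (using $q\le 1$ and $\tau(t)\le\tau_m$); so it suffices that $\lambda^2\ge e^{\lambda\tau_m}$, i.e. $2\ln\lambda\ge\lambda\tau_m$. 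The sharpest choice is $\lambda=e/\tau_m$, for which $\lambda\tau_m=e$ and $2\ln\lambda=2(1-\ln\tau_m)$; one checks that $2\ln(e/\tau_m)\ge e$ holds precisely when $\tau_m\le 2/e$ — indeed at $\tau_m=2/e$ equality is attained, and the left side is decreasing in $\tau_m$. (Equivalently, the function $\lambda\mapsto 2\ln\lambda-\lambda\tau_m$ is maximized at $\lambda=2/\tau_m$, where it equals $2\ln(2/\tau_m)-2$, which is $\ge 0$ iff $\tau_m\le 2/e$; I may use this slightly larger $\lambda$ instead, it makes no difference.) Thus under $\tau_m\le 2/e$ the function $x(t)=e^{-\lambda t}$ is a positive, strictly decreasing solution of \eqref{5.30}.

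Having produced such a solution, Lemma~\ref{Lemma12} immediately yields that the Wronskian of the fundamental system of \eqref{2.41} does not vanish on $[0,+\infty)$, which is the assertion. The main thing to be careful about — the only real obstacle — is the sharp constant: one must verify that the elementary inequality $2\ln\lambda\ge\lambda\tau_m$ is solvable in $\lambda$ exactly for $\tau_m\le 2/e$, which is the calculus fact that $\max_{\lambda>0}\big(2\ln\lambda-\lambda\tau_m\big)=2\ln(2/\tau_m)-2\ge 0\iff \tau_m\le 2/e$; everything else is routine. (I should also note that the conclusion can alternatively be read off directly from the cited references \cite{agar,BDK,labo,labophd}, where this is exactly the classical $2/e$ disconjugacy criterion; the argument above is just the standard exponential-supersolution proof of it, adapted to the measurable setting via the continuous-dependence remark already used for Lemmas~\ref{Lemma7.42} and \ref{Lemma12}.)
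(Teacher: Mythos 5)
The paper does not prove this lemma; it is quoted as a known result with citations to \cite{agar,BDK,labo,labophd}, so there is no internal proof to compare against. Your reconstruction, via the rescaling Lemma~\ref{Lemma3.32}, an explicit exponential supersolution $e^{-\lambda t}$ of the inequality \eqref{5.30}, and Lemma~\ref{Lemma12} to pass from that supersolution to nonvanishing of the Wronskian, is indeed the standard disconjugacy argument behind the $2/e$ constant and is correct in substance. You are also right that the lemma has the implicit hypothesis $p(t)\le 0$ (inherited from the section and required for Lemma~\ref{Lemma12}); without it there is nothing to prove by this method.

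One numerical slip worth fixing: the sentence claiming $\lambda=e/\tau_m$ is ``the sharpest choice'' and that $2\ln(e/\tau_m)\ge e$ holds precisely for $\tau_m\le 2/e$ is false. With $\lambda=e/\tau_m$ the condition $2\ln\lambda\ge\lambda\tau_m$ reads $2(1-\ln\tau_m)\ge e$, i.e.\ $\tau_m\le e^{1-e/2}\approx 0.698$, which is strictly smaller than $2/e\approx 0.736$, so that choice of $\lambda$ fails near the endpoint. Your parenthetical is the correct version: the maximizer of $\lambda\mapsto 2\ln\lambda-\lambda\tau_m$ is $\lambda=2/\tau_m$, and there the maximum $2\ln(2/\tau_m)-2\ge 0$ is equivalent to $\tau_m\le 2/e$. (Also note that $2/\tau_m$ is \emph{smaller} than $e/\tau_m$, not ``slightly larger'' as written.) Replacing the first choice by $\lambda=2/\tau_m$ throughout removes the inconsistency and the argument then goes through cleanly; the routine remaining checks are that $\lambda=2/\tau_m>1$ so that taking logarithms is legitimate, and that $e^{-\lambda t}$ remains a valid supersolution for the truncated inequality appearing in the Vall\'ee Poussin-type framework, since the characteristic-function truncation only decreases the right-hand side.
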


\begin{remark}
\label{Remark5.53}In virtue of Lemma {\ref{Lemma2.47} and} the monotonicity
of $\Psi (1,\Delta )$, defined in \eqref {7.11}, setting $\gamma $ as the
unique solution of 
\begin{equation}
\Psi (1,\gamma )=\gamma \in \lbrack \sqrt{2},\frac{\pi }{2}],  \label{12.53}
\end{equation}%
we have%
\begin{equation}
\Psi (1,\Delta )>\Delta ,\text{ if }\Delta <\gamma .  \label{5.56}
\end{equation}
\end{remark}

\bigskip For oscillatory solutions of \eqref {2.41} with nonpositive
coefficient $p(t)\leq 0,t\in 
\mathbb{R}
,$ we have the following result.

\begin{theorem}
\label{Theorem3.14}Consider an oscillatory solution $x$\ of \eqref {2.41},
with $p(t)\leq 0,t\in 
\mathbb{R}
$ and assume that 
\begin{equation}
\tau _{m}\sqrt{\limfunc{esssup}_{t\in 
\mathbb{R}
}|p(t)|}\leq \gamma ,  \label{10.34}
\end{equation}%
where $\gamma $ is defined in \eqref {12.53}. Then $x(t)$ is bounded. If,
further, the strict inequality holds in \eqref{10.34}, then $x(t)$ tends to
zero at infinity.
\end{theorem}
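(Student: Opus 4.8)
The plan is to reduce Theorem~\ref{Theorem3.14} to Theorem~\ref{Theorem10.41} by a change of variables and a sign-change trick. First I would normalize the equation: by Lemma~\ref{Lemma3.32}, applying the scaling $t\mapsto kt$ with $k:=\big(\limfunc{esssup}_{t}|p(t)|\big)^{1/2}$ transforms \eqref{2.41} into an equation with coefficient $k^{2}p(kt)$, whose essential supremum of absolute value is $1$, and with delay $\frac{1}{k}\tau(kt)$ whose essential supremum is $k\,\tau_{m}\cdot\frac{1}{k^2}\cdot k$... more carefully, the new maximal delay is $\tfrac1k\,\tau_m \cdot k =$ wait — it is $\tfrac1k \tau_m$ times... the bound becomes $\widetilde\tau_m = \tfrac1k\,\limfunc{esssup}\tau(kt) \le \tfrac1k\tau_m \cdot$; since the scaling of the argument does not change the essential supremum of $\tau$, we get $\widetilde\tau_m \le \tau_m/k \cdot$... in any case \eqref{10.34} is exactly the statement that after normalization $\widetilde\tau_m \le \gamma$, and \eqref{7.04} holds. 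So without loss of generality I may assume \eqref{7.04} and $\tau_m \le \gamma$ from the outset.

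Second, since $p(t)\le 0$ and we only care about the oscillatory solution $x$, I would pass to a companion equation with nonnegative coefficient. The key observation is that an oscillatory solution $x$ of $x''(t) + p(t)x(t-\tau(t)) = 0$ with $p\le 0$ satisfies, on each semicycle, a concavity/convexity pattern opposite to the $p\ge 0$ case; but what matters for the semicycle estimates is that within a semicycle $(a,b)$ of $x$, the sign of $x(t-\tau(t))$ relative to $x(t)$ can be controlled when the delay is small. Concretely, I expect the argument to run: on a semicycle of length $b-a$, if $b-a$ together with $\tau_m$ is small enough that $t-\tau(t)$ does not reach back past the previous zero too far, then $-p(t)x(t-\tau(t))$ has a definite sign and $x$ behaves like a solution of $x'' = |p(t)|\,x(\text{delayed})\ge 0$ within the semicycle — but that is the wrong direction for boundedness. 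The correct route is instead to invoke Lemma~\ref{Lemma12}: the existence of a positive nonincreasing solution is equivalent to nonvanishing of the Wronskian, and under $\tau_m\le\gamma$ one shows the relevant semicycle bound forces exactly the hypothesis of Theorem~\ref{Theorem10.41}.

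Third — and this is where Remark~\ref{Remark5.53} enters — I would show that under \eqref{10.34} every semicycle $(a,b)$ of $x$ automatically satisfies $b-a \le \Psi(1,\tau_m) + \vartheta_{\tau_m}$, so that Theorem~\ref{Theorem10.41} applies. The point of \eqref{5.56} is that when $\tau_m < \gamma$ we have $\Psi(1,\tau_m) > \tau_m$, which gives room. The mechanism: for $p\le 0$, within a semicycle the solution is convex-then-concave or simply convex toward the maximum in a way that, combined with Corollary~\ref{Corollary5.40} (descent) and Theorem~\ref{Theorem10.28} (ascent), bounds the semicycle length above; the delayed argument $t-\tau(t)$ with $\tau_m\le\gamma\le\tfrac\pi2$ stays within the controlled window $[\xi-\tau_m-\vartheta_{\tau_m},\xi]$ used in the proof of Theorem~\ref{Theorem10.41}. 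Once the semicycle bound $b-a\le \Psi(1,\tau_m)+\vartheta_{\tau_m}$ is established, boundedness follows directly from Theorem~\ref{Theorem10.41}; and if the inequality in \eqref{10.34} is strict, then $\tau_m < \gamma$, hence by \eqref{5.56} $\Psi(1,\tau_m) + \vartheta_{\tau_m} > \tau_m + \vartheta_{\tau_m} \ge$ (a constant strictly exceeding the semicycle length), so condition \eqref{10.21} holds with a suitable $c$, giving convergence to zero.

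The main obstacle I anticipate is verifying the semicycle length bound for the \emph{negative} coefficient equation: the comparison lemmas of Sections 2 and 3 are stated for $|p(t)|$ and nonnegative-coefficient comparison equations, so one must argue that an oscillatory solution of the $p\le 0$ equation, restricted to a single semicycle, obeys the same ascent/descent estimates — essentially because within one semicycle $x$ and $x(\cdot-\tau(\cdot))$ have a sign relationship making $-p(t)x(t-\tau(t))$ comparable to $|p(t)|$ times the relevant majorant, \emph{provided} the delay does not overshoot the preceding zero by more than $\vartheta_{\tau_m}$, which is precisely guaranteed by $\tau_m\le\gamma$ via the descent estimate $\vartheta_{\tau_m}\ge\sqrt2$. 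Making this sign-bookkeeping rigorous without regularity assumptions, in the spirit of the reduction used in Theorem~\ref{Theorem3.37}, is the delicate part; everything after that is a direct citation of Theorem~\ref{Theorem10.41} and Remark~\ref{Remark5.53}.
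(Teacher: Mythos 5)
Your reduction to Theorem~\ref{Theorem10.41} does not work, and the paper does not attempt it. Your plan is to first show that under $p\le0$ and $\tau_m\le\gamma$ every semicycle of $x$ has length at most $\Psi(1,\tau_m)+\vartheta_{\tau_m}$, and then cite Theorem~\ref{Theorem10.41}. But no such a priori semicycle bound holds: on a semicycle $(a,b)$ with $x>0$ and $p\le0$, for $t\in(a+\tau_m,b)$ the delayed argument $t-\tau(t)$ lies in $(a,b)$, so $x''(t)=-p(t)x(t-\tau(t))\ge0$ there; thus $x$ is simply convex and nonincreasing on $(a+\tau_m,b)$, and a convex positive function can take arbitrarily long to descend to zero. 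The appeal to Lemma~\ref{Lemma12} and to a ``companion equation with nonnegative coefficient'' are likewise not part of the argument.

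The paper instead re-runs the growth-contradiction scheme of Theorem~\ref{Theorem10.41} in parallel, using the sign $p\le0$ at a different and more local point. Fix $\xi$ with $x(\xi)=0$ and suppose $|x|$ exceeds $M:=\max_{[\xi-\tau_m-\vartheta_{\tau_m},\xi]}|x|$ after $\xi$; this locates a semicycle $(a,b)$, $a\ge\xi$, with amplitude $>M$, whose first extremum occurs at $w$. Corollary~\ref{Corollary5.41} gives $w-a\ge\Psi(\rho,\tau_m)$ with $\rho<1$, hence $w-a>\Psi(1,\tau_m)\ge\tau_m$ by Remark~\ref{Remark5.53}. Now comes the step you were missing: since $x$ must descend from its maximum to $0$ at $b$, there is a positive-measure set of $v>w$ with $x''(v)<0$; because $x''(v)=-p(v)x(v-\tau(v))$ and $-p(v)\ge0$, this forces $x(v-\tau(v))<0$, i.e.\ $v-\tau(v)<a$. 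But $v>w\ge a+\Psi(\rho,\tau_m)>a+\tau_m\ge a+\tau(v)$ gives $v-\tau(v)>a$ --- contradiction. When the inequality in \eqref{10.34} is strict, $\Psi(\rho,\tau_m)\ge\tau_m$ already holds for some $\rho>1$, so the same contradiction applies at the smaller threshold $M/\rho$, and iterating over zeros spaced more than $\tau_m+\vartheta_{\tau_m}$ apart yields geometric decay. In short, the mechanism is not ``make $p\le0$ behave like $|p|$ within a semicycle'' (which fails), but ``concavity past the maximum requires the delayed argument to overshoot the left endpoint of the semicycle, and the ascent estimate together with $\tau_m\le\gamma$ makes that impossible''; you correctly flagged the scaling reduction, Corollary~\ref{Corollary5.41}, and Remark~\ref{Remark5.53} as the relevant tools, but did not assemble them into the actual contradiction.
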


\begin{proof}
By Lemma {\ref{Lemma3.32}, it suffices to consider the case }$\limfunc{esssup%
}_{t\in 
\mathbb{R}
}|p(t)|=1$ (the case of the ODE, $\limfunc{esssup}_{t\in 
\mathbb{R}
}|p(t)|=0$, is trivial).{\ }Let us consider a point $\xi $ such that $x(\xi
)=0$. We shall show that%
\begin{equation}
|x(u)|\leq \max_{t\in \lbrack \xi -\tau _{m}-\vartheta _{\tau _{m}},\xi
]}|x(t)|,u\geq \xi ,  \label{9.47}
\end{equation}%
where $\vartheta _{(\cdot )\text{ }}$is as in Lemma {\ref{Lemma6.42}. }We
may assume $\max_{t\in \lbrack \xi -\tau _{m}-\vartheta _{\tau _{m}},\xi
]}|x(t)|>0$ without loss of generality. Assuming the contrary, that %
\eqref{9.47} does not hold, by considering the point 
\begin{equation*}
\psi :=\inf \{u>\xi :|x(u)|>\max_{t\in \lbrack \xi -\tau _{m}-\vartheta
_{\tau _{m}},\xi ]}|x(t)|\},
\end{equation*}%
it is easy to see that this point is in a semicycle $(a,b)$ such that%
\begin{equation}
\max_{t\in (a,b)}|x(t)|>\max_{t\in \lbrack \xi -\tau _{m}-\vartheta _{\tau
_{m}},a]}|x(t)|=\max_{t\in \lbrack \xi -\tau _{m}-\vartheta _{\tau _{m}},\xi
]}|x(t)|,  \label{5.55}
\end{equation}%
where $a\geq \xi $. We may assume that the extremum of this semicycle is
positive, without loss of generality. Then in $\left( a,b\right) $, the
maximum is first assumed at a point $w$ such that $x^{\prime }(w)=0.$ Using %
\eqref {2.41}, and $p(t)\leq 0,t\in 
\mathbb{R}
,$ there exists a set of positive measure consisting of points $v>w$ such
that $x^{\prime \prime }(v)<0,$ and also $\tau (v)<a$. By Corollary{\ \ref%
{Corollary5.41}, and \eqref{5.55}, we have }for such points $v$ 
\begin{eqnarray*}
v &>&w\geq a+\Psi \left( \frac{\max_{t\in \lbrack \xi -\tau _{m}-\vartheta
_{\tau _{m}},\xi ]}|x(t)|}{\max_{t\in (a,b)}|x(t)|},\tau _{m}\right) \\
a &>&\tau (v),
\end{eqnarray*}%
where $\Psi (\rho ,\Delta )$ is defined in \eqref {7.11}.

Under the strict inequality in \eqref {10.34} we have by {\eqref{5.56}, and
Lemma \ref{Lemma2.47}, that} $\Psi (\rho ,\tau _{m})\geq \tau _{m}$ for a $%
\rho >1$.

We will similarly show that 
\begin{equation*}
|x(u)|\leq \frac{1}{\rho }\max_{t\in \lbrack \xi -\tau _{m}-\vartheta _{\tau
_{m}},\xi ]}|x(t)|,u\geq \xi .
\end{equation*}%
Assuming the contrary, by considering the point 
\begin{equation*}
\psi :=\inf \{u>\xi :|x(u)|>\frac{1}{\rho }\max_{t\in \lbrack \xi -\tau
_{m}-\vartheta _{\tau _{m}},\xi ]}|x(t)|\},
\end{equation*}%
it is easy to see that this point is in a semicycle $(a,b)$ such that%
\begin{equation}
\max_{t\in (a,b)}|x(t)|>\frac{1}{\rho }\max_{t\in \lbrack \xi -\tau
_{m}-\vartheta _{\tau _{m}},a]}|x(t)|=\frac{1}{\rho }\max_{t\in \lbrack \xi
-\tau _{m}-\vartheta _{\tau _{m}},\xi ]}|x(t)|,  \label{5.43}
\end{equation}%
where $a\geq \xi $. We may assume that the extremum of this semicycle is
positive, without loss of generality. Then in $\left( a,b\right) $, the
maximum is first assumed at a point $w$ such that $x^{\prime }(w)=0.$ Using %
\eqref {2.41}, and $p(t)\leq 0,t\in 
\mathbb{R}
,$ there exists a set of positive measure consisting of points $v>w$ such
that $x^{\prime \prime }(v)<0,$ and also $\tau (v)<a$. By Corollary{\ \ref%
{Corollary5.41}, and \eqref{5.43}, we have }for such points $v$ 
\begin{eqnarray*}
v &>&w\geq a+\Psi \left( \frac{\max_{t\in \lbrack \xi -\tau _{m}-\vartheta
_{\tau _{m}},\xi ]}|x(t)|}{\max_{t\in (a,b)}|x(t)|},\tau _{m}\right) \\
a &>&\tau (v).
\end{eqnarray*}%
The required result follows by considering any sequence of zeros $\zeta _{n}$
with $\zeta _{1}:=\xi $ and $\zeta _{n+1}>\zeta _{n}+\tau _{m}+\vartheta
_{\tau _{m}}$. By induction, one obtains 
\begin{equation*}
|x(u)|\leq \left( \frac{1}{\rho }\right) ^{n}\max_{t\in \lbrack \xi -\tau
_{m}-\vartheta _{\tau _{m}},\xi ]}|x(t)|,u\geq \zeta _{n}.
\end{equation*}
\end{proof}

\begin{corollary}
Assume $p(t)\leq 0,t\in 
\mathbb{R}
$,$\int_{0}^{\infty }|p(t)|dt=\infty $ and $\tau _{m}\sqrt{\limfunc{esssup}%
_{t\in 
\mathbb{R}
}|p(t)|}<\gamma ,$ where $\gamma $ is defined in \eqref {12.53}. Then any
given solution $x$ of \eqref{2.41} has exactly one of the following
asymptotic behaviors:
\end{corollary}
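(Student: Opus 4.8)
The plan is to dichotomize on whether $x$ is oscillatory, treating the oscillatory case by Theorem~\ref{Theorem3.14} and the nonoscillatory case by the Kamenskii Lemma \cite{kamen} quoted at the start of this section. First I would normalize: by Lemma~\ref{Lemma3.32} we may assume $\limfunc{esssup}_{t\in\mathbb{R}}|p(t)|=1$ (the alternative $\limfunc{esssup}|p|=0$ is incompatible with $\int_0^\infty|p|=\infty$), so that the hypothesis $\tau_m\sqrt{\limfunc{esssup}|p|}<\gamma$ is precisely the strict form of \eqref{10.34}. Hence every oscillatory solution of \eqref{2.41} falls, by Theorem~\ref{Theorem3.14}, into the behavior $\lim_{t\to\infty}x(t)=0$; note that the strict inequality is used only here, the nonoscillatory analysis needing only $\int_0^\infty|p|=\infty$.

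For a nonoscillatory solution $x$, the zero set of $x$ is bounded above, so by the intermediate value theorem $x$ is eventually of one strict sign; since \eqref{2.41} is linear, replacing $x$ by $-x$ if necessary we may fix $T\ge 0$ with $x(t)>0$ for all $t\ge T-\tau_m$. Restricting $x$ to $[T,+\infty)$ and regarding its values on $[T-\tau_m,T]$ as initial data, $x$ is a positive solution of \eqref{2.41} on $[T,+\infty)$, and $\int_T^\infty|p|=\infty$ as well (the missing piece $\int_0^T|p|$ being finite). The Kamenskii Lemma then gives that either $\lim_{t\to\infty}x(t)=\lim_{t\to\infty}x'(t)=+\infty$, or $\lim_{t\to\infty}x(t)=0$ with $x'(t)\le 0$ eventually and $\lim_{t\to\infty}x'(t)=0$; undoing the sign change yields the two corresponding alternatives for the original $x$. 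Together with the oscillatory case, these account for the full list of asymptotic behaviors in the statement.

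It remains to see that the alternatives are \emph{mutually exclusive}, so that each solution realizes exactly one. A solution with $\lim_{t\to\infty}|x(t)|=+\infty$ is unbounded and so cannot tend to zero; an oscillatory solution has arbitrarily large zeros and hence is not eventually of one sign, so it is disjoint from both nonoscillatory alternatives; and the two nonoscillatory alternatives are separated by the limit of $|x(t)|$ at infinity. The only scenario that might seem to escape the list — a nonoscillatory solution that is \emph{not} eventually monotone — does not occur, since both possibilities supplied by the Kamenskii Lemma are eventually monotone, so no additional argument is needed. I expect the only (minor) obstacle to be the bookkeeping of the reduction to a positive solution on the shifted half-line $[T,+\infty)$, together with checking that the normalization $\limfunc{esssup}|p|=1$ keeps Theorem~\ref{Theorem3.14} and the Kamenskii Lemma simultaneously applicable.
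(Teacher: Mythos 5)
Your argument is correct and matches the route the paper clearly intends: the corollary is stated without proof as a direct consequence of Theorem~\ref{Theorem3.14} and the Kamenskii Lemma, and your dichotomy on oscillatory versus nonoscillatory solutions, combined with the sign/shift reduction to a positive solution on a half-line $[T,+\infty)$, is exactly how those two results assemble into the four alternatives. The preliminary normalization via Lemma~\ref{Lemma3.32} is harmless but superfluous here, since Theorem~\ref{Theorem3.14} already has the scale-invariant hypothesis $\tau_m\sqrt{\operatorname{esssup}|p|}<\gamma$ built in and the Kamenskii Lemma requires only $p\le 0$ and $\int_0^\infty|p|=\infty$, both of which hold without rescaling.
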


$i$)%
\begin{equation*}
\text{ }x\text{ is oscillatory and }\lim_{t\rightarrow \infty }x(t)=0\text{ }
\end{equation*}%
$ii$) 
\begin{equation*}
\text{ }x\text{ is nonoscillatory, tends to zero monotonically,}%
\lim_{t\rightarrow \infty }x^{\prime }(t)=0\text{ }
\end{equation*}%
$iii$) 
\begin{equation*}
\lim_{t\rightarrow \infty }x(t)=\lim_{t\rightarrow \infty }x^{\prime
}(t)=+\infty
\end{equation*}%
$iv$) 
\begin{equation*}
\lim_{t\rightarrow \infty }x(t)=\lim_{t\rightarrow \infty }x^{\prime
}(t)=-\infty .
\end{equation*}

If furthermore, $\tau _{m}\sqrt{\limfunc{esssup}_{t\in 
\mathbb{R}
}|p(t)|}\leq \frac{2}{e}$, there must exist solutions exhibiting $ii$).

\section{Discussion and open problems}

The following example illustrates the import and information of Theorem {\ref%
{Theorem10.41}. }

\begin{example}
Consider the equation%
\begin{equation}
x^{\prime \prime }(t)+x(t-4)=0.  \label{7.43}
\end{equation}%
Here $\tau _{m}=4>2\sqrt{2}$ $\approx 2.83$ and Theorem {\ref{Theorem10.41}
implies that unbounded solutions of }\eqref {7.43} have semicycles of length
greater than $2\sqrt{2}$, whereas oscillatory solutions with semicycles of
smaller length tend to zero. It is known \cite[p. 135]{Hale} that Equation %
\eqref{7.43} possesses a spectral decomposition, and its real eigensolutions
are of the form $x(t)=\func{Re}\left( \exp (\lambda t)\right) $, or $x(t)=%
\func{Im}\left( \exp (\lambda t)\right) $, where $\lambda $ is a solution of 
\begin{equation*}
\lambda ^{2}+\exp (-4\lambda )=0.
\end{equation*}%
Using the Lambert $W-$function (on which see \cite{lambert}), we may express
the eigenvalues $\lambda $ in the form%
\begin{equation*}
\lambda =\frac{1}{2}W_{n}(\pm 2i),n\in 
\mathbb{Z}
.
\end{equation*}%
Let us consider the eigenvalues 
\begin{eqnarray*}
\frac{1}{2}W_{0}(\pm 2i) &\approx &0.34\pm 0.37i, \\
\frac{1}{2}W_{1}(+2i) &\approx &-0.57+3.05i \\
\frac{1}{2}W_{1}(-2i) &\approx &-0.21+1.50i \\
\frac{1}{2}W_{2}(+2i) &\approx &-0.92+6.21i \\
\frac{1}{2}W_{2}(-2i) &\approx &-0.77+4.63i
\end{eqnarray*}%
We notice that, as a necessary consequence of Theorem {\ref{Theorem10.41},}
the unbounded solutions corresponding to eigenvalues with positive real part
have semicycles of length 
\begin{equation*}
\frac{\pi }{|\func{Im}\frac{1}{2}W_{0}(\pm 2i)|}\approx 8.45>2\sqrt{2}%
\approx 2.83,
\end{equation*}%
while the solutions with semicycles of length less than $2\sqrt{2}$ 
\begin{eqnarray*}
\frac{\pi }{|\func{Im}\frac{1}{2}W_{1}(+2i)|} &\approx &1.03<2\sqrt{2}%
\approx 2.83 \\
\frac{\pi }{|\func{Im}\frac{1}{2}W_{1}(-2i)|} &\approx &2.09<2\sqrt{2}%
\approx 2.83 \\
\frac{\pi }{|\func{Im}\frac{1}{2}W_{2}(+2i)|} &\approx &0.51<2\sqrt{2}%
\approx 2.83 \\
\frac{\pi }{|\func{Im}\frac{1}{2}W_{2}(-2i)|} &\approx &0.68<2\sqrt{2}%
\approx 2.83.
\end{eqnarray*}%
correspond to eigenvalues with negative real part, thus tend to zero.
\end{example}

We now present examples illustrating the sharpness of the bounds obtained in
Theorem {\ref{Theorem10.41} in the cases }$\tau _{m}=0,\tau _{m}\geq 2\sqrt{2%
}$.

\begin{example}
\label{ex2} Fix an arbitrarily small $\varepsilon \geq 0$. Then by the
inequality $\tan (x)>\tanh (x),x\in (0,\frac{\pi }{2}),$ we have $%
\varepsilon -\arctan \left( \frac{\sinh (\varepsilon )}{\cosh \left(
\varepsilon \right) }\right) \geq 0$, and we set $A:=\pi +\varepsilon
-\arctan \frac{\sinh (\varepsilon )}{\cosh \left( \varepsilon \right) }\geq
\pi .$ Consider the solution of 
\begin{eqnarray*}
x^{\prime \prime }(t)+p(t)x(t)=0 && \\
p(t):=\left\{ 
\begin{array}{cc}
-1, & t\in [ nA,nA+\varepsilon ] \\ 
+1, & t\in [ nA+\varepsilon ,(n+1)A]%
\end{array}%
\right. && \\
x(0)=0,x^{\prime }(0)=1 &&
\end{eqnarray*}%
where $n=0,1,2,...$ Obviously, 
\begin{equation*}
x(t)=\left\{ 
\begin{array}{l}
\left( -\sqrt{\left( \sinh (\varepsilon )\right) ^{2}+\left( \cosh
(\varepsilon )\right) ^{2}}\right) ^{n}\sinh \left( t-nA\right) , \\ 
t\in [ nA,nA+\varepsilon ], \\ 
\left( -1\right) ^{n}\left( \sqrt{\left( \sinh (\varepsilon )\right)
^{2}+\left( \cosh (\varepsilon )\right) ^{2}}\right) ^{n+1} \!\!\! \!\! \!
\sin \left[ t-nA-\varepsilon +\arctan \left( \frac{\sinh (\varepsilon )}{%
\cosh \left( \varepsilon \right) }\right) \right] , \\ 
t \in [ nA+\varepsilon ,(n+1)A]%
\end{array}%
\right.
\end{equation*}%
and $x(t)$ has semicycles of length $A,$ does not tend to zero when $A=\pi $%
, is unbounded when $A>\pi $.
\end{example}

\begin{example}
\label{ex3} Fix $B=2\sqrt{2}+2\varepsilon ,$ where $\varepsilon \geq 0$ may
be arbitrarily small, and consider the solution of 
\begin{eqnarray*}
y^{\prime \prime }(t)+p(t)y(t-\tau (t))=0 && \\
p(t):=\left\{ 
\begin{array}{ll}
-1, & t\in \lbrack 2nB,2nB+\sqrt{2}+\varepsilon ] \\ 
+1, & t\in \lbrack 2nB+\sqrt{2}+\varepsilon ,(2n+1)B]%
\end{array}%
\right. && \\
\tau (t):=\left\{ 
\begin{array}{cc}
t-(2nB-\sqrt{2}), & t\in \lbrack 2nB,2nB+\sqrt{2}] \\ 
t-(2nB+\sqrt{2}), & t\in \lbrack 2nB+\sqrt{2},2nB+\sqrt{2}+2\varepsilon ] \\ 
t-(2nB+\sqrt{2}+2\varepsilon ), & t\in \lbrack 2nB+\sqrt{2}+2\varepsilon
,(2n+1)B]%
\end{array}%
\right. && \\
y(-\sqrt{2})=-1,y(0)=0,y^{\prime }(0)=\sqrt{2}. &&
\end{eqnarray*}%
where $n=0,1,2,...$ Obviously we have%
\begin{equation*}
y(t)=\left\{ 
\begin{array}{l}
(-1)^{n}\left[ 1+\varepsilon ^{2}\right] ^{n}\left( 1-\frac{1}{2}\left( 
\sqrt{2}-\left( t-2nB\right) \right) ^{2}\right) , \\ 
t\in \lbrack 2nB,2nB+\sqrt{2}], \\ 
(-1)^{n}\left[ 1+\varepsilon ^{2}\right] ^{n}\left( 1+\frac{1}{2}\left(
t-2nB-\sqrt{2}\right) ^{2}\right) , \\ 
t\in \lbrack 2nB+\sqrt{2},2nB+\sqrt{2}+\varepsilon ], \\ 
(-1)^{n}\left[ 1+\varepsilon ^{2}\right] ^{n}\left[ 1+\frac{1}{2}\varepsilon
^{2}-\frac{1}{2}\left( t-\left[ 2nB+\sqrt{2}+\varepsilon \right] \right)
^{2}+\varepsilon \left( t-\left[ 2nB+\sqrt{2}+\varepsilon \right] \right) %
\right] , \\ 
t\in \lbrack 2nB+\sqrt{2}+\varepsilon ,2nB+\sqrt{2}+2\varepsilon ], \\ 
(-1)^{n}\left[ 1+\varepsilon ^{2}\right] ^{n+1}\left( 1-\frac{1}{2}\left( t-%
\left[ 2nB+\sqrt{2}+2\varepsilon \right] \right) ^{2}\right) , \\ 
t\in \lbrack 2nB+\sqrt{2}+2\varepsilon ,(2n+1)B].%
\end{array}%
\right.
\end{equation*}%
Hence, $y(t)$ has semicycles of length $B,$ does not tend to zero whenever $%
B=2\sqrt{2}$ and is unbounded when $B>2\sqrt{2}$. 
\end{example}

However, straightforward calculations show that similar periodic examples
are not generally possible in the case $\tau _{m}\in (0,2\sqrt{2})$, as
these would violate the continuity of the first derivative at the zeros. We
do note that the bounds on the semicycles variate within a relatively small
interval $\left[ 2\sqrt{2}\approx 2.83,\pi \approx 3.14\right] .$ Hence the
present results are rather close to the optimal case, yet deeper
investigation of the first derivative at the zeros is required in order to
obtain better bounds. Presently, it is unclear how the methods of this paper
can be extended so as to include this second dimension without regularity
restrictions such as slow oscillation or the separation of the zeros.
Perhaps the techniques of \cite{eliason} may prove useful in this regard.

Ultimately, one also desires upper bounds on the semicycles, as well as
estimates on semicycle length depending on the initial function (see partial
results in \cite[Chapter 12, Chapter 14]{BDK}, \cite{Domshlak}, {\cite%
{Myshkis51}}). {Thus, based on the initial data, one could determine the
semicycles, whence one would deduce the asymptotic behavior of the solution.}

Examples~\ref{ex2} and \ref{ex3} have sign-changing coefficient, and we do
not presume our results are sharp in the case where the coefficient $p$ has
stable sign. It would be interesting to explore further results in the vein
of Theorem {\ref{Theorem3.14}, \cite[p. 39]{Myshkis51}, which directly take
into account the sign of the coefficient and its influence on the semicycles.%
}

\begin{problem}
\bigskip Find the optimal constant $\varkappa >0$ such that under $\tau
_{m}<\varkappa $ and $-1\leq p(t)\leq 0,t\in 
\mathbb{R}
$, all oscillatory solutions of \eqref {2.41} tend to zero at infinity.
\end{problem}

Remark {\ref{Remark5.53}, }Theorem {\ref{Theorem3.14}, imply that }$%
\varkappa \geq \sqrt{2}$. The following Example shows that $\varkappa \leq
\pi $.

\begin{example}
The periodic function $\sin (t)$ solves 
\begin{equation*}
y^{\prime \prime }(t)=y(t-\pi ),t\in 
\mathbb{R}%
\end{equation*}%
with constant delay $\tau _{m}=\pi .$
\end{example}

All the known examples of solutions on the boundary between solutions that
tend to zero and unbounded solutions are periodic, for both the second-order
and the first-order equation. Presuming the optimal constant{\ corresponds
to periodic oscillatory solutions of \eqref {2.41}, one should have }$%
\varkappa \approx \pi $, as in the above Example{. }As one cannot assume
that an arbitrary oscillatory solution is periodic in the investigation of
this problem, it remains open.

\section{Acknowledgments}

The first author was supported by the NSERC Grant RGPIN-2020-03934. The third author acknowledges the support of Ariel University.

\end{document}